\numberwithin{equation}{section}
\newtheorem{theorem}{Theorem}[section]
\newtheorem{proposition}[theorem]{Proposition}
\newtheorem{lemma}[theorem]{Lemma}
\newtheorem{remark}[theorem]{Remark}
\newtheorem{problem}[theorem]{Problem}
\newcommand{\cali}[1]{\mathscr{#1}}
\newcommand{\vol}{\mathop{\mathrm{vol}}}
\newcommand{\ddc}{dd^c}
\newcommand{\dc}{d^c}
\newcommand{\PSH}{{\rm PSH}}
\newcommand{\id}{{\rm id}}
\newcommand{\Cc}{\cali{C}}
\newcommand{\capK}{\text{cap}}
\newcommand{\B}{\mathbb{B}}
\newcommand{\C}{\mathbb{C}}
\newcommand{\N}{\mathbb{N}}
\newcommand{\R}{\mathbb{R}}
\title{\bf Quantitative stability for the complex Monge-Amp\`ere equations I}
\providecommand{\keywords}[1]{\textbf{\textit{Keywords:}} #1}
\providecommand{\subject}[1]{\textbf{\textit{Mathematics Subject Classification 2010:}} #1}
\author{Hoang-Son Do and Duc-Viet Vu}
\newcommand{\Addresses}{{
		\bigskip
		\footnotesize
		\textsc{Duc-Viet Vu, University of Cologne, Division of Mathematics, Department of Mathematics and Computer Science, Weyertal 86-90, 50931, K\"oln,  Germany}
		\noindent
		\par\nopagebreak
		\noindent
		\textit{E-mail address}: \texttt{vuviet@math.uni-koeln.de}	
		
	\bigskip
\footnotesize
\textsc{Hoang-Son Do, Vietnam Academy of Science and Technology, Institute of Mathematics, 18 Hoang Quoc Viet road, Cau Giay, Hanoi, Vietnam}
\noindent
\par\nopagebreak
\noindent
\textit{E-mail address}: \texttt{dhson@math.ac.vn}	}}
\date{\today}
\begin{document}
\maketitle
\begin{abstract} We generalize several known stability estimates for complex Monge-Amp\`ere equations to the setting of low (or high) energy potentials. We apply our estimates to obtain, among other things, a  quantitative domination principle, and metric properties of the space of potentials of finite energy. Further applications will be given in subsequent papers.   
\end{abstract}
\noindent
\keywords {Monge-Amp\`ere equation}, {convex weights}, {lower energy}, {non-pluripolar products}.
\\

\noindent
\subject{32U15}, {32Q15}.


\section{Introduction}

Let $(X,\omega)$ be a compact K\"ahler manifold of dimension $n$ and let $\alpha$ be a big cohomology $(1,1)$-class in $X$. Let $\theta$ be a closed smooth real $(1,1)$-form in $\alpha$. For $u \in \PSH(X, \theta)$, we put $\theta_u:= \ddc u+ \theta$. 
Let $\phi \in \PSH(X, \theta)$ such that $\phi \le 0$ and $\int_X \theta_\phi^n >0$, where $\theta_\phi^n$ denotes the non-pluripolar self-product of $\theta_\phi$ (see \cite{BEGZ,BT_fine_87}). Denote by $\PSH(X,\theta, \phi)$ the set of $\theta$-psh functions $u$ with $u \le \phi$. Note that it is slightly different from the usual definition of $\PSH(X,\theta,\phi)$ in which  $u$ is only required to be more singular than $\phi$. This difference is not essential.  We say that $\phi$ is a \emph{model $\theta$-psh function} (see \cite{Lu-Darvas-DiNezza-mono,Ross-WittNystrom}) if $\phi= P_\theta[\phi]$ and $\int_X \theta_\phi^n >0$, where 
$$P_\theta[\phi]:= \big(\sup \{ \psi \in \PSH(X,\theta): \psi \le 0, \, \psi \le \phi+ O(1)\}\big)^*.$$
The function $P_\theta[\phi]$ is called a roof-top envelope in \cite{Lu-Darvas-DiNezza-mono}.  By \cite{Lu-Darvas-DiNezza-mono}, the function $P_\theta[u]$ is a model one for every $u \in \PSH(X,\theta)$ with $\int_X \theta_u^n >0$, and for every $u \in \PSH(X,\theta,\phi)$ with $\int_X \theta_u^n= \int_X \theta_\phi^n$ we have $P_\theta[u]= P_\theta[\phi]$.

Let $\phi$ be now a model $\theta$-psh function. Let $\mathcal{E}(X, \theta,\phi)$ be the space of $\theta$-psh functions $u \le \phi$ with $\int_X \theta_u^n = \int_X \theta_\phi^n$.  Let $\mu$ be a non-pluripolar measure with $\mu(X)= \int_X \theta_\phi^n$. It was proved in \cite{Lu-Darvas-DiNezza-logconcave} (see also \cite{Vu_Do-MA,Lu-Darvas-DiNezza-mono}) that the Monge-Amp\`ere equation with prescribed singularities 
 \begin{align}\label{eq-MAphi}
(\ddc u+ \theta)^n  = \mu, \quad u \in \PSH(X, \theta,\phi)
\end{align}
admits a unique solution $u \in \mathcal{E}(X,\theta, \phi)$ and $\sup_X(u-\phi)=0$.  We note that the left-hand side of (\ref{eq-MAphi}) denotes the non-pluripolar self-product of $\theta_u$ (see \cite{BT_fine_87,BEGZ,GZ-weighted,Viet-generalized-nonpluri}). We refer to \cite{BEGZ,Cegrell,Dinew-uniqueness,Kolodziej_Acta,Yau1978}, to cite a few, for the well-known case where $\alpha$ is big and $\phi$ is a potential of minimal singularities in $\alpha$.

The aim of this paper is to study the following stability question for the equation (\ref{eq-MAphi}).

\begin{problem} \label{pro-stability} Let $\theta, \phi$ be as above. Let $u_j \in \mathcal{E}(X, \theta, \phi)$ for $j=1,2$ and $\mu_j:= \theta_{u_j}^n$ for $j=1,2$.  Compare $u_1$ with $u_2$ in terms of a suitable ``distance'' between $\mu_1,\mu_2$? 
\end{problem}

To our best knowledge, there has been no available quantitative comparison between potentials of finite energy in general, even in the case where $\alpha$ is K\"ahler and $\phi\equiv 0$. The closest result that we know of is the uniqueness property (by \cite{Dinew-uniqueness} in the K\"ahler case and by \cite{BEGZ,Lu-Darvas-DiNezza-logconcave} in the present setting) which says that $u_1= u_2$ if $\mu_1 =\mu_2$. There were however some concrete estimates for the distance between $u_1,u_2$ in terms of $\mu_1,\mu_2$ but one had to assume some extra assumption (i.e., $u_1,u_2 \in \mathcal{E}^1(X,\theta,\phi)$); see \cite{Blocki_stability,Guedj-Zeriahi-big-stability}. We will explain details below.

The goal of this paper  is to solve Problem \ref{pro-stability} for any potential of high or low energy. As one will see in our applications later in this paper or in our subsequent paper, it is crucial to consider Problem \ref{pro-stability} for potentials in low energy.

Let $\widetilde{\mathcal{W}}^-$ be the set of convex, non-decreasing functions $\chi: \R_{\le 0} \to \R_{ \le 0 }$ such that $\chi(0)=0$ and $\chi \not \equiv 0$. Let $\mathcal{W}^-$ be the subset of $\chi \in \widetilde{\mathcal{W}}^-$ such that $\chi(-\infty)= -\infty$.  Note that in general $\chi \in \widetilde{\mathcal{W}}^-$ can be bounded. 
It is crucial in our method that we consider also bounded weights $\chi \in \widetilde{\mathcal{W}}^-$.  Let $M \ge 1$ be a constant and  $\mathcal{W}^+_M$ the usual space of  increasing concave functions $\chi: \R_{\le 0} \to \R_{ \le 0}$ such that $\chi(0)=0$, $\chi<0$ on $(-\infty, 0)$, and $|t \chi'(t)| \le M|\chi(t)|$ for every $t \le 0$.

Let $\varrho:= \int_X \theta_\phi^n$. For  $\chi \in \widetilde{\mathcal{W}}^- \cup  \mathcal{W}^+_M$ and   $u \in \PSH(X,\theta,\phi)$, let 
 $$E^0_{\chi, \theta, \phi}(u):= - \varrho^{-1} \int_X \chi(u- \phi) \theta_u^n$$
 which is called \emph{the (normalized) $\chi$-energy} of $u$ (with respect to $\theta, \phi$).   We denote
$$\mathcal{E}_\chi(X, \theta,\phi):=\big\{u \in \mathcal{E} (X, \theta, \phi): E^0_{\chi, \theta, \phi}(u)<\infty  \big\}.$$ Certainly if $\chi$ is bounded, then $\mathcal{E}_\chi(X, \theta,\phi)= \mathcal{E}(X, \theta,\phi)$. We would like to point out however that our method is not about the finiteness of $E^0_{\chi, \theta, \phi}(u)$ but estimating the size of that quantity. Thus whether $\chi$ is bounded or not does not make much difference for our later arguments.  
  Put
$$\quad I^0_\chi(u,v):=  \varrho^{-1}\int_{\{u<v\}} \chi(u-v) (\theta_v^n - \theta_u^n)+\varrho^{-1}\int_{\{u>v\}} \chi(v-u) (\theta_u^n - \theta_v^n)$$
 for $u,v \in \mathcal{E}_{\chi}(X, \theta, \phi)$. The factor $\varrho^{-1}$ in the defining formulae for $E^0_{\chi,\theta, \phi}(u)$ and  $I^0_\chi(u,v)$ plays the role of a normalizing constant. In geometric applications it is important to treat the case where $\varrho \to 0$, \emph{i.e,} to obtain estimates uniformly  as $\varrho \to 0$. 
 
Clearly if $\theta_u^n= \theta_v^n$, then $I^0_\chi(u,v)=0$. We will see later that each term in the sum defining $I^0_\chi(u,v)$ is nonnegative. 
We recall that there is a natural (quasi-)metric on the space $\mathcal{E}_\chi(X,\theta, \phi)$ constructed in \cite{Darvas_book,Darvas-lower-energy, Gupta}, and see \cite{DDL-L1metric,Lu-DiNezza-Lpmetric,Trusiani-energy,Xia-energy} as well.  The functional $I^0_\chi(u,v)$ has an intimate relation with these quasi-metrics. We refer to the end of Section \ref{sec-fixedtype} for details on this connection.  Here is  our main result.

\begin{theorem} \label{th-lowerenergy-phanintro}  Let $\theta$ be a closed smooth real $(1,1)$-form  and $\phi$ be a  negative $\theta$-psh function such that $\varrho:= \int_X \theta_\phi^n>0$.  Let $\chi, \tilde{\chi} \in \widetilde{\mathcal{W}}^-\cup\mathcal{W}^+_M$ ($M\geq 1$) such that $\tilde{\chi} \le \chi$, and if $\chi \in \widetilde{\mathcal{W}}^-$, then $\lim_{t\to -\infty} \frac{\chi(t)}{\tilde{\chi}(t)}=0$. Let $B \ge 1$ be a constant and let $u_j, \psi_j \in \mathcal{E}(X, \theta, \phi)$ satisfy $u_1 \le u_2$ and 
$$E^0_{\tilde{\chi}, \theta, \phi}(u_j)+E^0_{\tilde{\chi}, \theta, \phi}(\psi_j) \le B,$$
for $j=1,2$. Then there exist a constant $C>0$ depending only on $n, \tilde{\chi}(-1)$ and $M$,
 and a continuous increasing function $f: \R_{\ge 0} \to \R_{ \ge 0}$ depending only on $\chi, \tilde{\chi}$ such that $f(0)=0$ and 
\begin{align} \label{ine-maintheorem1short}
\int_X  -\chi (u_1-u_2)  (\theta_{\psi_1}^n- \theta_{\psi_2}^n) \le 
C\varrho B^2 f^{\circ n}\big(I^0_\chi(u_1,u_2)\big),
\end{align}
where $f^{\circ n}:= f \circ f \circ \cdots \circ f$ ($n$-iterate of $f$). Moreover, if 
$\phi=P_{\theta}[\phi]$ and $\sup_Xu_1=\sup_Xu_2$ then 
\begin{align} \label{ine2-maintheorem1short}
	\int_X  -\chi (u_1-u_2)  (\theta_{\psi_1}^n+\theta_{\psi_2}^n) \le 
	\varrho\, g\big(I^0_\chi(u_1,u_2)\big),
\end{align}
where $g: \R_{\ge 0} \to \R_{ \ge 0}$ is a continuous increasing function depending only on $n, M, X, \omega, \theta,\chi, \tilde{\chi}$ and $B$ such that $g(0)=0$.
\end{theorem}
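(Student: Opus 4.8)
The plan is to derive \eqref{ine-maintheorem1short} from a one-step comparison estimate iterated $n$ times, the iteration being the origin of $f^{\circ n}$. The starting point is the telescoping identity for non-pluripolar products: since $\psi_1,\psi_2\in\mathcal{E}(X,\theta,\phi)$ have the same total mass, one has, as signed measures on $X$,
$$\theta_{\psi_1}^n-\theta_{\psi_2}^n=\sum_{k=0}^{n-1}\ddc(\psi_1-\psi_2)\wedge\theta_{\psi_1}^{k}\wedge\theta_{\psi_2}^{n-1-k},$$
all the mixed products and the sum being well defined and mass preserving — exactly the kind of statement recalled in Section~\ref{sec-fixedtype} for potentials of a fixed singularity type. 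Hence $\int_X-\chi(u_1-u_2)\,(\theta_{\psi_1}^n-\theta_{\psi_2}^n)$ reduces to a sum of $n$ integrals of the form $\int_X-\chi(u_1-u_2)\,\ddc(\psi_1-\psi_2)\wedge T$ with $T$ a mixed non-pluripolar product of $\theta_{\psi_1},\theta_{\psi_2}$ of bidegree $(n-1,n-1)$, and it suffices to bound each of these.

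The core is an induction that progressively trades the $\theta_{\psi_j}$-factors for factors controlled by $I^0_\chi(u_1,u_2)$ and by the energies in the hypothesis. Concretely I would prove, by induction on the number $p$ of $\theta_{\psi_j}$-factors present, a bound for
$$\int_X-\chi(u_1-u_2)\,\ddc(\psi_1-\psi_2)\wedge\theta_{\psi_1}^{a}\wedge\theta_{\psi_2}^{b}\wedge\theta_{u_1}^{c}\wedge\theta_{u_2}^{d},\qquad a+b+c+d=n-1,\quad a+b=p.$$
The step $p\to p-1$ is carried out by integrating by parts to move a $\ddc$ off one $\psi_j$ onto $\chi(u_1-u_2)$ — legitimate for these unbounded, prescribed-singularity potentials by the integration-by-parts calculus for cohomologically full mass currents — expanding $\ddc\chi(u_1-u_2)=\chi'(u_1-u_2)\,\ddc(u_1-u_2)+\chi''(u_1-u_2)\,d(u_1-u_2)\wedge\dc(u_1-u_2)$, carefully tracking the signs from the convexity (resp.\ concavity) of $\chi$ and from $\psi_j\le\phi$, and then applying Cauchy--Schwarz. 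This produces, schematically, a recursion $Q_p\le C\,(\varrho B^2)^{1/2}\,Q_{p-1}^{1/2}$, where $\varrho B^2$ records a product of energies of the $\psi_j$'s and $u_j$'s (each $\le B$ by hypothesis together with $\tilde{\chi}\le\chi$) and $Q_{p-1}$ collects the already-estimated lower-degree integrals; when $p=0$ only $\theta_{u_1},\theta_{u_2}$ remain and a final integration by parts gives $Q_0\lesssim\varrho\,I^0_\chi(u_1,u_2)$. Iterating $n$ times and using $B\ge1$ yields $Q_n\le C\varrho B^2\,\big(I^0_\chi(u_1,u_2)\big)^{2^{-n}}$, which is the claimed shape with $f$ essentially $t\mapsto t^{1/2}$; the weight comparison below upgrades this to the genuine modulus $f$. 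The normalizing factor $\varrho^{-1}$ built into $E^0$ and $I^0$ is exactly what keeps every constant independent of $\varrho$, in particular uniform as $\varrho\to0$, and $C$ depends only on $n$, $M$ and $\tilde{\chi}(-1)$ since the weights enter only through their normalized behaviour on $[-1,0]$.

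The modulus $f$ comes from a weight-comparison lemma, the main new ingredient that lets one work below (and above) finite energy. Given $\eta>0$, choose $R>0$ with $\sup_{t\le-R}\chi(t)/\tilde{\chi}(t)<\eta$ (possible by $\lim_{t\to-\infty}\chi(t)/\tilde{\chi}(t)=0$ when $\chi$ is unbounded; trivial otherwise), and split $-\chi(u_1-u_2)=-\chi(u_1-u_2)\,\ind_{\{u_1-u_2\ge-R\}}+(-\chi(u_1-u_2))\,\ind_{\{u_1-u_2<-R\}}$. On the first set one may replace $-\chi$ by the bounded weight $t\mapsto-\max(\chi(t),\chi(-R))$, reducing the estimate, quantitatively, to the finite-energy ($\mathcal{E}^1$-type) stability estimates of \cite{Blocki_stability,Guedj-Zeriahi-big-stability}; on the second set $-\chi(u_1-u_2)\le\eta\,(-\tilde{\chi}(u_1-u_2))$, whose integral against the relevant measures is $\lesssim\eta\varrho B$ by standard comparisons of mixed energies. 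Optimizing $\eta=\eta(I^0_\chi(u_1,u_2))$ yields a continuous increasing $f$ with $f(0)=0$ depending only on $\chi$ and $\tilde{\chi}$. Conceptually this is the crux; technically, however, the hardest point is the bookkeeping of the previous paragraph — justifying the integrations by parts and controlling every positivity and boundary contribution for potentials with (possibly very) prescribed singularities, so that no mass escapes onto the polar locus of $\phi$, while preserving the exact dependence $C=C(n,M,\tilde{\chi}(-1))$ and the scaling $\varrho B^2$.

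For the two-sided estimate \eqref{ine2-maintheorem1short}, the difference $\theta_{\psi_1}^n-\theta_{\psi_2}^n$ is already controlled by \eqref{ine-maintheorem1short}, so it remains to bound $\int_X-\chi(u_1-u_2)\,\theta_{\psi_2}^n$ (and symmetrically for $\psi_1$). I would argue by contradiction and compactness: were no such $g$ to exist, one would find sequences $u_1^{(i)}\le u_2^{(i)}$, $\psi_2^{(i)}$ in $\mathcal{E}(X,\theta,\phi)$ with $\tilde{\chi}$-energies $\le B$, $I^0_\chi(u_1^{(i)},u_2^{(i)})\to0$, yet $\int_X-\chi(u_1^{(i)}-u_2^{(i)})\,\theta_{\psi_2^{(i)}}^n\ge\delta>0$. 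The uniform $\tilde{\chi}$-energy bound gives relative compactness in $\mathcal{E}_{\tilde{\chi}}(X,\theta,\phi)$; passing to limits, $I^0_\chi(u_1^\infty,u_2^\infty)=0$ together with the nonnegativity of the two terms defining $I^0_\chi$ and the plurifine locality of non-pluripolar products forces $\theta_{u_1^\infty}^n=\theta_{u_2^\infty}^n$, whence $u_1^\infty=u_2^\infty$ by $\phi=P_\theta[\phi]$, $\sup_X u_1^\infty=\sup_X u_2^\infty$ and the uniqueness in \eqref{eq-MAphi}; a uniform-integrability argument for the measures $\theta_{\psi_2^{(i)}}^n$, again via the $\tilde{\chi}$-energy bound and $\lim_{t\to-\infty}\chi(t)/\tilde{\chi}(t)=0$, then contradicts $\delta$. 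This last argument is non-quantitative, which is why $g$ is allowed to depend on $X,\omega,\theta$ and $B$.
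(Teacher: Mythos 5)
There is a genuine gap in the first and main part of your proposal, and it is exactly the point the paper singles out as the reason direct generalization of the $\chi(t)=t$ proofs fails. After you move a $\ddc$ off $\psi_1-\psi_2$ and expand $\ddc\chi(u_1-u_2)$, the Cauchy--Schwarz step in Lemma~\ref{le-CSine} produces, as one of the two factors, a term of the form
\begin{equation*}
\int_X \chi'(u_1-u_2)\, d(\psi_1-\psi_2)\wedge \dc(\psi_1-\psi_2)\wedge T
\end{equation*}
(or, in the reduction of Proposition~\ref{pro-mainstabilitylowenergyconvex}, $\int_X\chi'(u_1-u_2)\,d(u_1-u_3)\wedge\dc(u_1-u_3)\wedge T$). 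When $\chi=\id$ this factor has $\chi'\equiv1$ and is controlled by a further integration by parts; for general $\chi$ it mixes $\chi'$ of one pair of potentials with the gradient of a \emph{different} pair, and your schematic recursion $Q_p\le C(\varrho B^2)^{1/2}Q_{p-1}^{1/2}$ silently assumes this factor is uniformly $\lesssim\varrho B^2$, which is false in general. Your proposal does not contain the device the paper uses to handle it: the ``monotonicity argument'' in Lemma~\ref{le-secondhieuu1u2}, which for a small parameter $\epsilon>0$ introduces the regions $U(\epsilon)=\{u_1-u_2<\epsilon(u_1+u_3-2\phi)\}$, $V(\epsilon)$, $\Gamma(\epsilon)$ and the modified potentials $\tilde u_1,\tilde u_2$, so that on the relevant region $\chi'$ is evaluated at a controlled multiple of $u_j-\phi$ and can be bounded via Lemma~\ref{le-uocluongchiepsilon}. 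The modulus $Q$ (and hence the iterate $Q^{\circ n}$ that plays the role of your $f^{\circ n}$) emerges from optimizing over $\epsilon$ inside this lemma; it is not obtained, as your ``weight-comparison lemma'' suggests, by truncating $\chi$ at $-R$ and appealing to the $\mathcal E^1$ estimates of B{\l}ocki and Guedj--Zeriahi. Those estimates are proved for $\chi=\id$ only, and a truncated convex or concave weight $\max(\chi(t),\chi(-R))$ is still a general bounded weight, so that reduction does not close.

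For the second estimate~\eqref{ine2-maintheorem1short} you propose a soft compactness--contradiction argument instead of the paper's constructive route (Theorem~\ref{th-lowerenergy-kocochuanhoa}, which solves an auxiliary Monge--Amp\`ere equation $\theta_\varphi^n=\frac{\varrho}{b_t}\ind_{W_t}\omega^n$ on $W_t=\{u_1>\sup_X u_1-t\}$, uses the $L^\infty$ estimate of Theorem~\ref{the P[u]-C<u} and the volume lower bound of Lemma~\ref{lem vol estimate}, then optimizes in $t$). Since the theorem only asks for some increasing $g$ with $g(0)=0$ depending on $X,\omega,\theta,B$, a compactness argument is in principle admissible, but as written it is not complete: you would need to specify a topology on $\mathcal E_{\tilde\chi}(X,\theta,\phi)\times\mathcal E_{\tilde\chi}(X,\theta,\phi)$ in which the energy ball is sequentially compact, $I^0_\chi$ is lower semicontinuous, and the functional $(u_1,u_2,\psi)\mapsto\int_X-\chi(u_1-u_2)\,\theta_\psi^n$ is upper semicontinuous, and the last point is the crux because $\psi^{(i)}$ varies and $\theta_{\psi^{(i)}}^n$ may concentrate; ``uniform integrability via the $\tilde\chi$-energy bound'' is precisely what must be proved, not assumed. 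The paper avoids all of this by replacing $\psi$ with a solution of an auxiliary equation whose density is in $L^p$ and applying Theorem~\ref{th-lowerenergy} to that solution.
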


If $\chi \in \mathcal{W}^+_M$, then one can certainly apply Theorem \ref{th-lowerenergy-phanintro} to $\tilde{\chi}=\chi$. Nevertheless, we underline that in applications it is of crucial importance to consider $\chi \in \widetilde{\mathcal{W}}^-$. In this case in order to have (\ref{ine2-maintheorem1short}), it is necessary to require an upper bound for $\tilde{\chi}$-energy of $u_j$, where $\tilde{\chi}$  ``dominates'' $\chi$ as in the statement of Theorem \ref{th-lowerenergy-phanintro}. We refer to Subsection \ref{ex-chingabangchi} for details. 

One sees that (\ref{ine2-maintheorem1short}) implies, in particular, that if $I^0_\chi(u_1,u_2) \to 0$, then the expression in the left-hand side also converges to $0$. 
Theorem \ref{th-lowerenergy-phanintro} follows from Theorems \ref{th-lowerenergy} and
\ref{th-lowerenergy-kocochuanhoa}  below, where the functions $f$ and $g$ are given explicitly.  
We note that \emph{the single theorem \ref{th-lowerenergy-phanintro} contains the following three important results in pluripotential theory: uniqueness of solutions of complex Monge-Amp\`ere equations, domination principle, and comparison of capacities}. We obtain indeed quantitative (hence stronger) versions of these results for which we refer to Section \ref{sec-appli}. The quantitative version of uniqueness theorem (see Theorem \ref{thequantitativeuniqueness2} below) provides an answer to Problem \ref{pro-stability}. Readers also find, in Section \ref{sec-appli}, a quantitative version of the fact that the convergence in Darvas's metric in $\mathcal{E}_\chi(X, \theta, \phi)$ implies the convergence in capacity. Notice that such an estimate seems to be not reachable by using the usual plurisubharmonic envelope method.

The main novelty of Theorem \ref{th-lowerenergy-phanintro} is that it deals with \emph{arbitrary} weights. Similar statements was already known for $\chi(t)=t$ (see \cite{Blocki_stability,BBEGZ-crelle,Guedj-Zeriahi-big-stability,Trusiani-strong-topo}). However the proof there only work \emph{exclusively} for this case. One should notice that the weight $\chi(t)=t$ is very special: it is linear and lies in the middle between higher energy weights and lower energy weights. As to the proof of Theorem \ref{th-lowerenergy-phanintro},  going up to the space of higher energy weights or going down to the space of lower energy weights are equally difficult.   We will explain this point in more details in the paragraph after Theorem \ref{the-mainstabilitylowenergyconvexintro} below.

The key in the proof of Theorem \ref{th-lowerenergy-phanintro} is Proposition \ref{pro-mainstabilitylowenergyconvex} in Section 3 a simplified version of which we state here for readers' convenience.

\begin{theorem} \label{the-mainstabilitylowenergyconvexintro} Let $\chi, \tilde{\chi} \in\widetilde{\mathcal{W}}^-\cup\mathcal{W}^{+}_M$	such that $\tilde{\chi}\leq\chi$ and $\chi\in \Cc^1(\R)$.  Let $u_1, u_2, u_3 \in \mathcal{E}(X, \theta, \phi)$ such that $u_1 \le u_2$ and
	$u_j-\phi$ is bounded ($j=1, 2, 3$), where $\phi$ is a negative $\theta$-psh function satisfying 
	$\varrho:=\vol(\theta_\phi)>0$.  Then there exist a constant $C>0$ depending only on $n, \tilde{\chi}(-1)$ 
	and $M$  such that 
	\begin{align*} 
		\int_X  \chi'(u_1-u_2) d(u_1- u_2) \wedge \dc (u_1- u_2) \wedge \theta_{u_3}^{n-1}  \le C \varrho B^2 f^{\circ (n-1)}\big(I^0_\chi(u_1,u_2)\big),		
	\end{align*}
	where $B:=\sum_{j=1}^3 \max\{E^0_{\tilde{\chi},\theta, \phi}(u_j),1 \}$ and
	and  $f: \R_{\ge 0} \to \R_{\ge 0}$  is a continuous function such that $f(0)=0$ if one has either $\chi \in \mathcal{W}^{+}_M$ or $\chi \in \widetilde{\mathcal{W}}^{-}$ and $\lim_{t\to -\infty} \frac{\chi(t)}{\tilde{\chi}(t)}=0$.
\end{theorem}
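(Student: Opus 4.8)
The strategy is an induction on the dimension $n$ (or rather on the exponent $n-1$ of the power of $\theta_{u_3}$ appearing in the energy integral), exploiting the fact that the integrand on the left is a positive current of bidegree $(1,1)$ wedged against $\theta_{u_3}^{n-1}$. The base case corresponds to $n=1$, where $\theta_{u_3}^0=1$ and the inequality
$$\int_X \chi'(u_1-u_2)\, d(u_1-u_2)\wedge\dc(u_1-u_2) \le C\varrho B^2\, I^0_\chi(u_1,u_2)$$
should follow from an integration by parts together with the algebraic identity relating $\chi'(u_1-u_2) d(u_1-u_2)\wedge\dc(u_1-u_2)$ to $-\chi(u_1-u_2)(\theta_{u_2}-\theta_{u_1})$ up to controllable error terms; here the boundedness of $u_j-\phi$ makes all integrations by parts legitimate (this is exactly why the simplified statement imposes that hypothesis, to be removed later by approximation in the full Theorem \ref{th-lowerenergy-phanintro}). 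First I would write $\theta_{u_j}=\ddc(u_j-\phi)+\theta_\phi$ and expand, reducing everything to terms of the form $\int \chi'(u_1-u_2)\, d(u_1-u_2)\wedge\dc(u_1-u_2)\wedge T$ where $T$ ranges over $\theta_\phi^k\wedge\theta_{u_3}^{n-1-k}$ and mixed products; the key point is that each such $T$ is a closed positive current with total mass comparable to $\varrho$.

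The inductive step is the heart of the argument. Assuming the estimate in dimension (degree) $n-2$, one wants to bound the degree-$(n-1)$ integral. The natural move is to replace one factor of $\theta_{u_3}$ in $\theta_{u_3}^{n-1}$ by $\ddc(u_3-\phi)+\theta_\phi$ and integrate by parts, transferring the $\ddc(u_3-\phi)$ onto the product $\chi'(u_1-u_2) d(u_1-u_2)\wedge\dc(u_1-u_2)$. Differentiating that product produces a term with $\chi''(u_1-u_2)$ (which has a sign, since $\chi$ is convex/concave) and a term with a second-order derivative $d\dc(u_1-u_2)=\theta_{u_2}-\theta_{u_1}$; the latter is precisely where the quantity $I^0_\chi(u_1,u_2)$ enters, after using $u_1\le u_2$ so that $\{u_1<u_2\}$ is the relevant set. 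One then needs a Cauchy–Schwarz-type inequality for mixed Monge–Ampère currents (of the form $\int a\,d v\wedge\dc w\wedge S \le (\int a\,dv\wedge\dc v\wedge S)^{1/2}(\int a\,dw\wedge\dc w\wedge S)^{1/2}$ with weight $a=\chi'(u_1-u_2)$ and $S$ a positive current) to decouple the resulting cross terms, feeding one factor back into the induction hypothesis at the lower degree and absorbing the other. The functions $f$ arise by composing the square-root losses from these Cauchy–Schwarz steps; the constant $C$ depends only on $n$, $\tilde\chi(-1)$ and $M$ because the structural constants controlling $\chi''$ relative to $\chi'$ (via the $|t\chi'(t)|\le M|\chi(t)|$ condition in $\mathcal{W}^+_M$, or convexity in $\widetilde{\mathcal{W}}^-$) only involve these.

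The main obstacle, and the reason the weight $\chi(t)=t$ case in the literature does not generalize directly, is controlling the error terms generated by comparing $\chi(u_1-u_2)$, $\chi'(u_1-u_2)$ and $\chi''(u_1-u_2)$ against one another along the integration-by-parts chain, uniformly over \emph{all} convex/concave weights simultaneously: for a linear $\chi$ the second derivative vanishes and the whole bookkeeping collapses, whereas in general one must carefully track how the $\tilde\chi$-energy bound $B$ (which dominates $\chi$-energy in the required asymptotic sense, by the hypothesis $\lim_{t\to-\infty}\chi(t)/\tilde\chi(t)=0$) allows one to absorb the contributions of the unbounded region $\{u_1-u_2\ll 0\}$. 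Concretely, the hardest estimate will be showing that the ``tail'' contributions — integrals over $\{u_1-u_2<-R\}$ weighted by $\chi'$ or $\chi''$ — are dominated by $B^2\varrho$ times a small quantity, which is where one genuinely needs the domination of $\chi$ by $\tilde\chi$ together with a convexity/monotonicity comparison; getting the dependence of $f$ to be only on $\chi,\tilde\chi$ (and not on $B$) at this stage, with the $B$-dependence cleanly factored out as $B^2$, requires splitting each integral at a threshold depending on $\chi,\tilde\chi$ alone and handling the two pieces by the energy bound and by a modulus-of-continuity argument for $f$ respectively.
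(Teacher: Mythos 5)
Your high-level skeleton — reverse induction on the power of $\theta_{u_3}$, integration by parts, a Cauchy--Schwarz decoupling, and the observation that the $\tilde\chi$-energy bound must absorb the tail contributions of $u_1-u_2$ — matches the paper's Proposition~\ref{pro-mainstabilitylowenergyconvex}. The paper sets $T_{k,l}=\theta_{u_1}^k\wedge\theta_{u_2}^l\wedge\theta_{u_3}^{n-k-l-1}$, $L_{k,l}=\int\chi'(\varphi)\,d\varphi\wedge\dc\varphi\wedge T_{k,l}$ with $\varphi=u_1-u_2$, starts from $L_{k,n-1-k}\le\varrho I^0_\chi(u_1,u_2)$, and inducts downward on $k+l$; each step writes $L_{k-1,l}-L_{k,l}$ as a difference of mixed integrals of the form $\int\chi'(\varphi)\,d\varphi\wedge\dc(u_1-u_3)\wedge T_{\cdot}$.

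The gap is in how you propose to close the inductive step. You plan to ``transfer $\ddc(u_3-\phi)$ onto $\chi'(\varphi)\,d\varphi\wedge\dc\varphi$,'' producing $\chi''(\varphi)$ terms. The paper does not do this; in fact the introduction explicitly identifies the obstacle as the mixed term $\chi'(u_1-u_2)\,d(u_1-u_3)\wedge\dc(u_1-u_3)$, not anything involving $\chi''$. The reason the naive route fails is that after a Cauchy--Schwarz you must control $\int\chi'(\varphi)\,d(u_1-u_3)\wedge\dc(u_1-u_3)\wedge T$, and the weight $\chi'(\varphi)$ depends on $u_1-u_2$ while the gradient square involves $u_1-u_3$; there is no sign structure or energy bound that directly controls this for a general $\chi$, and introducing $\chi''$ does not help because neither sign of $\chi''$ makes the problematic term absorbable. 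What is missing from your plan is the paper's key technical device (Lemma~\ref{le-secondhieuu1u2}): for a small parameter $\epsilon$ one introduces auxiliary potentials $\tilde u_1=(u_1+\epsilon u_3)/(1+\epsilon)$ and $\tilde u_2=\max\{(u_2+\epsilon u_3)/(1+\epsilon),\ ((1-\epsilon)u_1+2\epsilon\phi)/(1+\epsilon)\}$, splits $X$ by plurifine locality into the region $U(\epsilon)$ where $(1+\epsilon)(\tilde u_1-\tilde u_2)=\varphi$ and its complement, bounds the complement piece by Lemma~\ref{le-uocluongchiepsilon} (this is precisely where the ratio $Q_0(\epsilon)=\sup_{t\le-1}\chi(\epsilon t)/\tilde\chi(t)$ and the hypothesis $\chi(t)/\tilde\chi(t)\to 0$ enter), and applies Cauchy--Schwarz on $U(\epsilon)\cup V(\epsilon)$ where the auxiliary potentials make the weight $\chi'((1+\epsilon)\tilde\varphi)$ tractable via convexity/concavity of $\chi$. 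The choice $\epsilon\searrow J/(2\varrho)$ then closes the estimate. Without this ``monotonicity/plurilocality'' mechanism and the $\epsilon$-optimization, your ``splitting each integral at a threshold depending on $\chi,\tilde\chi$ alone'' remains a gesture, and the inductive step does not go through.

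A minor structural remark: the base case of the induction is not $n=1$ but the top-degree terms $k+l=n-1$ (which already equal $\varrho I^0_\chi(u_1,u_2)$ after one integration by parts); the induction then decrements $k+l$ to reach $k=l=0$, picking up one composition of the loss function $Q$ (your $f$) per step.
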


As far as we know, all of previous works related to Theorem \ref{the-mainstabilitylowenergyconvexintro} only concern with $\chi(t)=t$.  In this case, Theorem \ref{the-mainstabilitylowenergyconvexintro} was known with an explicit $f$ and without $\tilde{\chi}$ if   $\phi$ is of minimal singularity in the cohomology class of $\theta$, by \cite{Blocki_stability,Guedj-Zeriahi-big-stability}. 

The key ingredients in previous versions of  Theorem \ref{the-mainstabilitylowenergyconvexintro} for $\chi(t)=t$ are integration by parts arguments. Direct generalization of such reasoning immediately break down if $\chi  \not= \id$:  in  a more precise but technical level,  the integration by parts arguments give terms like $\chi'(u_1- u_2)d (u_1-u_3) \wedge \dc (u_1- u_3)$, such quantity is easy to bound if $\chi = \id$ (hence $\chi' \equiv 1$), but it is no longer the case if $\chi \not = \id$.  

In order to prove Theorem \ref{the-mainstabilitylowenergyconvexintro}, we still use this strategy but need to use a so-called ``monotonicity argument" from \cite{Vu_Do-MA,Viet-generalized-nonpluri,Viet-convexity-weightedclass} to deal with  general $\chi$. In a nutshell it is about using intensively the pluri-locality of Monge-Amp\`ere operators together with the monotonicity of pluricomplex energy which allows one to bound from above ``Monge-Amp\`ere quantities'' of bad potentials by that of nicer potentials.  This method is a flexible tool to deal with ``low regularity'', and was a key in the proof of the convexity of the class of potentials of finite $\chi$-energy  in \cite{Viet-convexity-weightedclass}, as well as, giving a characterization of the class of Monge-Amp\`ere measures with potentials of finite $\chi$-energy in \cite{Vu_Do-MA}.

We refer to the end of the paper for some applications of our main results. Furthermore, the quantitative domination principle obtained in Section \ref{sec-appli} was used crucially in \cite{DangVu} to describe the degeneration of conic K\"ahler-Einstein metrics. 
We note also that the present paper is the first part of the manuscript  \cite{Vu_DoHS-quantitative}, in which we give a more or less satisfactory treatment for a much more general question than Problem \ref{pro-stability}: precisely, we establish quantitative stability when both the cohomology class and the singularity type vary. The second part of \cite{Vu_DoHS-quantitative}, where this generalization is treated, will be submitted separately due to the length constraint.

The paper is organized as follows. In Section \ref{sec-preli}, we recall the integration by parts formula from \cite{Viet-convexity-weightedclass}, auxiliary facts about weights are also collected there. Theorems \ref{th-lowerenergy-phanintro} and \ref{the-mainstabilitylowenergyconvexintro} are proved  in Section \ref{sec-fixedtype}. Applications will be given in Section \ref{sec-appli}.\\

\noindent
\textbf{Acknowledgements.} We would like to thank Tam\'as Darvas, Vincent Guedj, Henri Guenancia,  Prakhar Gupta,  Hoang Chinh Lu, Tat Dat T\^o, Valentino Tosatti, and Ahmed Zeriahi for fruitful discussions. We also want to express our gratitude to anonymous referees for their careful reading and suggestions.  The research of D.-V. Vu is partly funded by the Deutsche Forschungsgemeinschaft (DFG, German Research Foundation)-Projektnummer 500055552.
The research of H.-S. Do  is  funded by International Centre for Research and Postgraduate Training in Mathematics (ICRTM) under grant number ICRTM01\_2023.01.
  \\

\section{Preliminaries} \label{sec-preli}

\subsection{Integration by parts} \label{subsec-intebyparts}

In this subsection, we recall the integration by parts formula obtained in \cite[Theorem 2.6]{Viet-convexity-weightedclass}. This formula will play a key role in our proof of main results later.

Let $X$ be a compact K\"ahler manifold.  Let $T_1, \ldots, T_m$ be closed positive $(1,1)$-currents on $X$. Let $T$ be  a closed positive current of bi-degree $(p,p)$ on $X$. The \emph{$T$-relative non-pluripolar product} $\langle \wedge_{j=1}^m T_j \dot{\wedge} T\rangle$ is defined  in a way similar to that of  the usual non-pluripolar product (see \cite{Viet-generalized-nonpluri}). The product $ \langle  \wedge_{j=1}^m T_j \dot{\wedge} T\rangle $ is a closed positive current of bi-degree $(m+p,m+p)$; and the wedge product $ \langle  \wedge_{j=1}^m T_j \dot{\wedge} T\rangle $ as an operator on currents  is  symmetric with respect to $T_1, \ldots, T_m$ and is homogeneous. In latter applications, we will only use the case where $T$ is the non-pluripolar product of some closed positive $(1,1)$-currents, say, $T= \langle T_{m+1} \wedge \cdots \wedge T_{m+l} \rangle$, where $T_j$ is $(1,1)$-currents for $m+1 \le j \le m+l$. In this case, $\langle T_1 \wedge \cdots \wedge T_m \dot{\wedge}T \rangle $ is simply equal to $\langle \wedge_{j=1}^{m+l} T_j \rangle$. We usually remove the bracket $\langle \quad \rangle$ in the non-pluripolar product to ease the notation. 

Recall that a \emph{dsh} function on $X$ is the difference of two quasi-plurisubharmonic (quasi-psh for short) functions on $X$ (see \cite{DS_tm}). These functions are well-defined outside pluripolar sets. Let $v$ be a dsh function on $X$.  Let $T$ be a closed positive current on $X$. We say that $v$ is \emph{$T$-admissible} if  there exist  quasi-psh functions $\varphi_1, \varphi_2$ such that $v= \varphi_1- \varphi_2$  and $T$ has no mass on $\{\varphi_j=-\infty\}$ 
for $j=1,2$. In particular, if $T$ has no mass on pluripolar sets, then every dsh function is $T$-admissible.  

Assume now that $v$ is $T$-admissible.    Let $\varphi_{1}, \varphi_{2}$ be quasi-psh functions such that $v= \varphi_{1}- \varphi_{2}$ and $T$ has no mass on $\{\varphi_{j}=-\infty\}$ for $j=1,2$. Let 
$$\varphi_{j,k}:= \max\{\varphi_{j}, -k \}$$
for every $j=1,2$ and $k \in \N$. Put $v_k:= \varphi_{1,k}- \varphi_{2,k}$. Put
$$Q_k:= d v_k \wedge \dc v_k \wedge T= \frac{1}{2}\ddc v_k^2  \wedge T - v_k\ddc v_k \wedge T.$$
By the plurifine locality with respect to $T$ (\cite[Theorem 2.9]{Viet-generalized-nonpluri}) applied to the right-hand side of the last equality, we have 
\begin{align}\label{eq-localplurifineddc}
\bold{1}_{\cap_{j=1}^2 \{\varphi_{j}> -k\}} Q_k =\bold{1}_{\cap_{j=1}^2\{\varphi_{j}> -k\}} Q_{s}
\end{align}
for every $s\ge k$. We say that $\langle d v \wedge \dc v \dot{\wedge} T \rangle$ is  \emph{well-defined} if the mass of $\bold{1}_{\cap_{j=1}^2 \{\varphi_{j}> -k\}} Q_k$ is uniformly bounded on $k$. In this case, using  (\ref{eq-localplurifineddc}) implies that there exists a positive current $Q$ on $X$ such that for every bounded Borel form $\Phi$ with compact support on $X$ such that 
$$\langle Q,\Phi \rangle  = \lim_{k\to \infty} \langle \bold{1}_{\cap_{j=1}^2 \{\varphi_{j}> -k\}} Q_k, \Phi\rangle,$$
and we define $\langle d v \wedge \dc v \dot{\wedge} T \rangle$ to be the current $Q$.  This agrees with the classical definition if $v$ is the difference of two  bounded quasi-psh functions. One can check  that this definition is independent of the choice of $\varphi_1, \varphi_2$.   By \cite[Lemma 2.5]{Viet-convexity-weightedclass}, if $v$ is bounded, then  $\langle d v \wedge \dc v \dot{\wedge} T \rangle$ is well-defined.

Let $w$ be another $T$-admissible dsh function.  If $T$ is of bi-degree $(n-1,n-1)$, we can also define the current $\langle  dv \wedge \dc w \dot{\wedge} T\rangle$ by a similar procedure as above. More precisely, we say $\langle dv \wedge \dc w \dot{\wedge} T \rangle$ is \emph{well-defined} if  $\langle dv \wedge \dc v \dot{\wedge} T \rangle$, $\langle dw \wedge \dc w \dot{\wedge} T \rangle$, and $\langle d(v+w) \wedge \dc (v+w) \dot{\wedge} T \rangle$ are well-defined. In this case, as in the classical case of bounded potentials, the defining formula for $\langle dv \wedge \dc w \dot{\wedge} T \rangle$ is obvious: 
$$2 \langle dv \wedge \dc w \dot{\wedge} T \rangle= \langle d(v+w) \wedge \dc (v+w) \dot{\wedge} T \rangle- \langle d v \wedge \dc v \dot{\wedge} T \rangle- \langle d w \wedge \dc w \dot{\wedge} T \rangle.$$
As above, if $v,w$ are bounded $T$-admissible, then $\langle dv \wedge \dc w \dot{\wedge} T \rangle$ is well-defined and given by the above formula. The following Cauchy-Schwarz inequality is clear from definition. 

\begin{lemma}\label{le-CSine} Assume that $\langle dv \wedge \dc w \dot{\wedge} T \rangle$ is well-defined. Then for every positive Borel function $\chi$,  we have
$$\int_X \chi \langle dv \wedge \dc w \dot{\wedge} T \rangle  \le \bigg(\int_X \chi \langle dv \wedge \dc v \dot{\wedge} T \rangle \bigg)^{1/2}\bigg(\int_X \chi \langle dw \wedge \dc w \dot{\wedge} T \rangle \bigg)^{1/2}.$$
\end{lemma}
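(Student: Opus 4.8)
The inequality is nothing but the pointwise Cauchy--Schwarz inequality for the bilinear pairing $(\xi,\eta)\mapsto d\xi\wedge\dc\eta\dot\wedge T$, integrated against the positive weight $\chi$; the only real work is to pass from the $T$-relative products to the classical setting of bounded potentials, where the statement is standard. We may assume the right-hand side is finite. Fix decompositions $v=\varphi_1-\varphi_2$, $w=\psi_1-\psi_2$ into quasi-psh functions such that $T$ charges none of the sets $\{\varphi_j=-\infty\}$, $\{\psi_j=-\infty\}$, and put
$$W_k:=\{\varphi_1>-k\}\cap\{\varphi_2>-k\}\cap\{\psi_1>-k\}\cap\{\psi_2>-k\},$$
an increasing sequence of plurifine open sets whose union has pluripolar complement, on which $T$ --- hence every $T$-relative product below --- has no mass. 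Let $v_k,w_k$ be the truncations of $v,w$ at level $-k$ coming from the $\varphi_j$'s and $\psi_j$'s.

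The first step is to record the relevant monotone convergence. Using the plurifine locality $(\ref{eq-localplurifineddc})$ and the definitions recalled in Subsection \ref{subsec-intebyparts}, the classical positive currents $\mathbf 1_{W_k}\,dv_k\wedge\dc v_k\wedge T$, $\mathbf 1_{W_k}\,dw_k\wedge\dc w_k\wedge T$ and $\mathbf 1_{W_k}\,d(v_k+w_k)\wedge\dc(v_k+w_k)\wedge T$ increase, as $k\to\infty$, to $\langle dv\wedge\dc v\dot{\wedge} T\rangle$, $\langle dw\wedge\dc w\dot{\wedge} T\rangle$ and $\langle d(v+w)\wedge\dc(v+w)\dot{\wedge} T\rangle$ respectively --- for the last one one uses that on $W_k$ the functions $\varphi_1+\psi_1,\varphi_2+\psi_2$ exceed $-2k$, so a truncation at level $-2k$ of the natural decomposition of $v+w$ agrees there with $v_k+w_k$, and plurifine locality then identifies the current in question with the $k$-th term in the definition of $\langle d(v+w)\wedge\dc(v+w)\dot{\wedge} T\rangle$. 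Since the polarization identity defining the mixed product is compatible with these truncations on $W_k$, monotone convergence applied to the Borel function $\chi\ge 0$ (first truncating $\chi$ at a level $N$ and then letting $N\to\infty$ to cope with the signed mixed term) shows that $\int_X\chi\,\mathbf 1_{W_k}\,dv_k\wedge\dc v_k\wedge T$, $\int_X\chi\,\mathbf 1_{W_k}\,dw_k\wedge\dc w_k\wedge T$ and $\int_X\chi\,\mathbf 1_{W_k}\,dv_k\wedge\dc w_k\wedge T$ converge, as $k\to\infty$, to the corresponding integrals against the $T$-relative products.

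It remains to prove the inequality at finite level $k$, which is classical. For the bounded potentials $v_k,w_k$ and any $\lambda\in\R$ the current $d(v_k+\lambda w_k)\wedge\dc(v_k+\lambda w_k)\wedge T$ is positive; multiplying by $\chi\,\mathbf 1_{W_k}\ge 0$ and integrating over $X$ gives $a_k+2\lambda b_k+\lambda^2 c_k\ge 0$ for all $\lambda\in\R$, where $a_k,c_k\ge 0$ are the first two integrals above and $b_k$ is the third; hence $b_k^2\le a_kc_k$, i.e. $b_k\le\sqrt{a_k}\sqrt{c_k}$. Letting $k\to\infty$ and invoking the convergences of the previous paragraph yields exactly
$$\int_X\chi\,\langle dv\wedge\dc w\dot{\wedge} T\rangle\ \le\ \Big(\int_X\chi\,\langle dv\wedge\dc v\dot{\wedge} T\rangle\Big)^{1/2}\Big(\int_X\chi\,\langle dw\wedge\dc w\dot{\wedge} T\rangle\Big)^{1/2}.$$
The only delicate point is the bookkeeping in the first step: arranging a single system of plurifine open sets $W_k$ on which all three truncated products stabilize and increase simultaneously to the respective relative products, and organizing the double limit (in $k$ and in the truncation level of $\chi$) needed for a merely Borel weight. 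Everything else is the classical polarization/discriminant computation.
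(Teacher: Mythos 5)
Your proof is correct and is exactly the natural unpacking of what the paper dismisses as ``clear from definition''; the key steps (simultaneous truncation on a common increasing system of plurifine open sets $W_k$, the classical discriminant argument for bounded potentials at level $k$, and monotone passage to the limit, with the mixed term handled by polarization as a difference of two increasing sequences with finite limits) are all in place. The one point worth making explicit is the claim that every $T$-relative product ``has no mass'' on the complement of $\cup_k W_k$: the definition of $\langle dv\wedge\dc v\dot\wedge T\rangle$ directly gives vanishing on $\{\varphi_1=-\infty\}\cup\{\varphi_2=-\infty\}$, but vanishing on $\{\psi_j=-\infty\}$ requires appealing to the independence of the definition from the chosen decomposition of $v$ (rewrite $v=(\varphi_1+\psi_j)-(\varphi_2+\psi_j)$), which the paper asserts but does not prove. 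Once that is granted, your argument that $\mathbf 1_{W_k}dv_k\wedge\dc v_k\wedge T$ increases to $\langle dv\wedge\dc v\dot\wedge T\rangle$ is justified, and the rest follows.
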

We put
$$\langle \ddc v \dot{\wedge} T \rangle:= \langle \ddc \varphi_1 \dot{\wedge} T\rangle- \langle \ddc \varphi_2 \dot{\wedge} T\rangle$$
which is independent of the choice of $\varphi_1,\varphi_2$.  The following integration by parts formula is crucial for us later.

\begin{theorem} \label{th-integrabypart}  (\cite[Theorem 2.6]{Viet-convexity-weightedclass} or \cite[Theorem 3.1]{Vu_Do-MA}) Let  $T$ be a closed positive current of bi-degree $(n-1,n-1)$ on $X$. Let $v,w$ be bounded $T$-admissible dsh functions on $X$. If $\chi: \R \to \R$ is a $\cali{C}^3$ function then we have  
\begin{multline}\label{eq-intebypartschi}
\int_X \chi(w) \langle  \ddc v \dot{\wedge} T\rangle=\int_X v \chi''(w) \langle dw \wedge \dc w \dot{\wedge} T\rangle+\int_X v \chi'(w) \langle \ddc w \dot{\wedge} T\rangle\\
= - \int_X \chi'(w) \langle d w \wedge  \dc v \dot{\wedge} T\rangle.
\end{multline}
\end{theorem}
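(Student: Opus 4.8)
The plan is to establish the integration by parts formula \eqref{eq-intebypartschi} by reducing to the classical case of smooth potentials via a regularization argument, and then to verify each equality by direct computation. First I would observe that since $v, w$ are bounded $T$-admissible dsh functions, all the non-pluripolar products appearing in \eqref{eq-intebypartschi} are well-defined by the discussion preceding the theorem (in particular $\langle dw \wedge \dc w \dot{\wedge} T\rangle$ and $\langle dw \wedge \dc v \dot{\wedge} T\rangle$ make sense, and $\chi'(w), \chi''(w)$ are bounded Borel functions since $\chi \in \Cc^3$ and $w$ is bounded). The middle expression $\int_X v \chi''(w) \langle dw \wedge \dc w \dot{\wedge} T\rangle + \int_X v \chi'(w)\langle \ddc w \dot{\wedge} T\rangle$ is nothing but $\int_X v \, \langle \ddc(\chi(w)) \dot{\wedge} T\rangle$ once one checks the chain rule $\langle \ddc(\chi(w)) \dot{\wedge} T\rangle = \chi'(w)\langle \ddc w \dot{\wedge} T\rangle + \chi''(w)\langle dw\wedge\dc w\dot{\wedge}T\rangle$ at the level of currents; this chain rule for $\Cc^3$ (or even $\Cc^2$) functions of bounded dsh functions is standard and itself proved by local regularization.

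Next I would prove the two equalities in turn. For the first equality (between $\int_X \chi(w)\langle\ddc v \dot{\wedge}T\rangle$ and $\int_X v\,\langle\ddc(\chi(w))\dot{\wedge}T\rangle$), the idea is the symmetry of the bilinear pairing $(a,b)\mapsto \int_X a\,\langle \ddc b \dot{\wedge} T\rangle$ for bounded dsh functions $a,b$: both sides equal $-\int_X da\wedge\dc b\dot{\wedge}T$ when things are integrated against the whole manifold (which has no boundary). Concretely I would write $\varphi_{j,k} = \max\{\varphi_j, -k\}$ as in the excerpt, set $v_k, w_k$ the corresponding truncations, and use that for bounded potentials the classical integration by parts (Theorem applied on $X$ compact without boundary) gives $\int_X \chi(w_k)\ddc v_k\wedge T_k = -\int_X \chi'(w_k)\,dw_k\wedge\dc v_k\wedge T_k = \int_X v_k\,\ddc(\chi(w_k))\wedge T_k$, where $T_k$ is a suitable smooth approximation of $T$ (or one works with $T$ fixed if it is already the relevant non-pluripolar product and uses the local nature). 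Then I would pass to the limit $k\to\infty$: on the region $\cap_j\{\varphi_j>-k\}$ the plurifine locality \eqref{eq-localplurifineddc} pins down the currents, the masses are uniformly bounded by the well-definedness hypothesis, and the integrands $\chi(w), \chi'(w), v$ are uniformly bounded since $v, w$ are bounded; dominated convergence then yields the identities. The second equality, $\int_X v\,\langle\ddc(\chi(w))\dot{\wedge}T\rangle = -\int_X \chi'(w)\langle dw\wedge\dc v\dot{\wedge}T\rangle$, follows from the same symmetric integration-by-parts pairing together with the chain rule, again justified by the truncation-and-limit procedure.

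The main obstacle, as usual in non-pluripolar pluripotential theory, is controlling the limits in the regions where the potentials are very negative: one must ensure no mass of the relevant currents escapes to the pluripolar set $\{\varphi_j = -\infty\}$ as $k\to\infty$. This is exactly where the hypothesis that $v, w$ are $T$-admissible and bounded is used — boundedness of $v, w$ makes the non-pluripolar products well-defined (by \cite[Lemma 2.5]{Viet-convexity-weightedclass}) with uniformly bounded mass, and $T$-admissibility guarantees the decomposition $v = \varphi_1 - \varphi_2$ with $T$ charging neither $\{\varphi_1=-\infty\}$ nor $\{\varphi_2=-\infty\}$, so the truncated products converge to the non-pluripolar ones. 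One also needs to check that the chain rule and the symmetric integration-by-parts identity are independent of the chosen quasi-psh decompositions $v = \varphi_1 - \varphi_2$, $w = \psi_1 - \psi_2$; this is handled by the standard argument that two such decompositions differ by a common quasi-psh (even pluriharmonic-up-to-smooth) correction, under which all the quantities transform consistently. Since the statement is quoted from \cite[Theorem 2.6]{Viet-convexity-weightedclass} (equivalently \cite[Theorem 3.1]{Vu_Do-MA}), I would in fact simply cite that reference and record the formula, but the sketch above is the route one takes to prove it from scratch.
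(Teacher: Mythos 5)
The paper itself gives no proof of Theorem \ref{th-integrabypart}: it is imported as a black box from \cite[Theorem 2.6]{Viet-convexity-weightedclass} (equivalently \cite[Theorem 3.1]{Vu_Do-MA}), so there is no in-paper argument to compare against. You correctly recognize this and say you would simply cite the reference; that is the right call, and your sketch of the underlying argument identifies the correct ingredients (decomposition $v=\varphi_1-\varphi_2$, truncation $\varphi_{j,k}=\max\{\varphi_j,-k\}$, plurifine locality, uniform mass bounds, chain rule for $\ddc\chi(w)$).

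The sketch does, however, leave a real gap at its most delicate step, the passage $k\to\infty$. Two issues. First, the ``classical integration by parts'' you invoke for the truncated functions $v_k, w_k$ is not the smooth case: $v_k, w_k$ are only bounded dsh and $T$ is an arbitrary closed positive $(n-1,n-1)$-current, so one needs the Bedford--Taylor/Demailly theory of bounded quasi-psh potentials against $T$ (or a further regularization carrying its own limiting argument), and your parenthetical about ``a suitable smooth approximation $T_k$ of $T$, or one works with $T$ fixed'' leaves this unsettled. Second, and more seriously, ``dominated convergence'' does not close the limit: the measures $\ddc v_k\wedge T$ and $dw_k\wedge\dc v_k\wedge T$ vary with $k$, and for a varying sequence of measures, pointwise convergence of the bounded Borel integrands $\chi(w_k),\chi'(w_k),v_k$ together with a uniform mass bound does not imply convergence of the integrals. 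Moreover, the current $dw_k\wedge\dc v_k\wedge T$ may carry mass on the bad set $\{\varphi_j\le -k \text{ for some } j\}$ which is \emph{not} part of the non-pluripolar limit that defines $\langle dw\wedge\dc v\dot{\wedge} T\rangle$ (which is built from the truncated current restricted to the good set and only then letting $k\to\infty$). Proving that this excess mass vanishes, or cancels between the terms of \eqref{eq-intebypartschi}, is exactly where the hypotheses of $T$-admissibility and boundedness of $v,w$ must earn their keep, via a monotonicity/cancellation argument that the sketch as written does not supply. In short, the outline is a sound map of the terrain, but the hard step is not yet closed.
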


Since the case where $T$ is a non-pluripolar product of $(1,1)$-currents plays an important role in the study of the complex Monge-Amp\`ere equation, we present below an equivalent natural way to define the current $\langle d \varphi \wedge \dc \varphi \dot{\wedge} T \rangle$ in this setting. It is just for the purpose of clarification. 

\begin{lemma}\label{le-bangnhauTnonpluripolar} Let $u_1,\ldots, u_m$ be  negative psh functions on an open subset $U$ in $\C^n$ such that $T:=\langle \ddc u_1 \wedge \cdots \wedge \ddc u_m \rangle$ is well-defined. Let $v$ be the difference of two bounded psh functions on $U$.  For $k \in \N$, put $u_{j,k}:= \max\{u_j, -k\}$ and 
$$T_k:= \ddc u_{1,k} \wedge \cdots \wedge \ddc u_{m,k}.$$
 Then we have 
$$d v \wedge \dc v \wedge T= d v \wedge \dc v \wedge T_k$$
on $\cap_{j=1}^m \{u_j >-k\}$. 
\end{lemma}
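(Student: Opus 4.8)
The plan is to reduce the statement to the plurifine locality of the non-pluripolar product and of the currents $\langle dv \wedge \dc v \dot\wedge (\cdot)\rangle$, both of which are available from the material recalled above (in particular \cite[Theorem 2.9]{Viet-generalized-nonpluri} and equation \eqref{eq-localplurifineddc}). The first observation is that on the plurifine open set $\Omega_k := \cap_{j=1}^m \{u_j > -k\}$ one has $u_{j,k} = u_j$ for every $j$, so that the truncated potentials $u_{j,k}$ and the original potentials $u_j$ define the \emph{same} closed positive $(1,1)$-currents $\ddc u_{j,k}$ and $\ddc u_j$ after multiplication by $\ind_{\Omega_k}$ (this is just that $\ddc$ of a psh function is determined locally in the plurifine topology by the function). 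Hence by the plurifine locality of the non-pluripolar product with respect to a product of $(1,1)$-currents, $\ind_{\Omega_k} T = \ind_{\Omega_k} T_k$; note $T_k$ is already an honest (classical, since the $u_{j,k}$ are bounded) product, so this identifies the relevant current on $\Omega_k$.

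Next I would address the factor $dv \wedge \dc v$. Since $v$ is the difference of two bounded psh functions, $dv \wedge \dc v \wedge S$ is well-defined (classically) for any closed positive current $S$ of bidegree $(n-1,n-1)$ coming from bounded potentials; in particular $dv\wedge\dc v\wedge T_k$ makes sense classically, and $dv\wedge\dc v\wedge T$ makes sense as a $T$-relative object by the discussion preceding Lemma \ref{le-CSine} (here $T$ need not come from bounded potentials, but the key is that $v$ itself is bounded, so no further admissibility issue arises). The identity to prove, $\ind_{\Omega_k}(dv\wedge\dc v\wedge T) = \ind_{\Omega_k}(dv\wedge\dc v\wedge T_k)$, then follows by writing $dv\wedge\dc v = \tfrac12 \ddc v^2 - v\,\ddc v$ and invoking plurifine locality termwise: $\ind_{\Omega_k}(\ddc v^2 \dot\wedge T) = \ind_{\Omega_k}(\ddc v^2 \wedge T_k)$ and $\ind_{\Omega_k}(v\,\ddc v\dot\wedge T) = \ind_{\Omega_k}(v\,\ddc v\wedge T_k)$, using $\ind_{\Omega_k}T=\ind_{\Omega_k}T_k$ together with the plurifine locality of $\langle\ddc(\cdot)\dot\wedge(\cdot)\rangle$ and the fact that $v$ is a fixed bounded Borel function whose product with a measure is plurifine-local. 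Combining the two steps yields the claim on $\Omega_k$.

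The main obstacle, as I see it, is purely bookkeeping: one must make sure that the relative non-pluripolar products $\langle dv \wedge \dc v \dot\wedge T\rangle$ and $\langle \ddc v^2 \dot\wedge T\rangle$, $\langle v\,\ddc v\dot\wedge T\rangle$ are actually well-defined and that the plurifine locality statements quoted for the \emph{usual} non-pluripolar product extend verbatim to the $T$-relative setting — but this is precisely what is recorded in \cite{Viet-generalized-nonpluri,Viet-convexity-weightedclass}, and for $v$ bounded the uniform mass bound needed for well-definedness is immediate. There is also the minor point that the decomposition $dv\wedge\dc v = \tfrac12\ddc v^2 - v\,\ddc v$ requires $v^2$ to be quasi-psh (true since $v$ is bounded dsh, in fact a difference of bounded psh functions, so $v^2$ differs from a psh function by a smooth term), which one checks in passing. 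No delicate limiting argument is needed here because everything is either classical (the $T_k$ side, all potentials bounded) or handled by the already-established plurifine locality (the $T$ side).
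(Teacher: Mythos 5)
Your proposal is correct, but it takes a genuinely different route from the paper's. The paper introduces the \emph{continuous} cutoff $\psi_k := k^{-1}\max\{u_1+\cdots+u_m,-k\}+1$, notes $\psi_k T = \psi_k T_k$ (one plurifine-locality swap at the $(m,m)$-level), then regularizes $v$ by smooth $v_\epsilon$ and uses Bedford--Taylor continuity against the \emph{fixed} currents $T$ and $T_k$ as $\epsilon\to 0$ to pass the scalar identity through $d v_\epsilon \wedge \dc v_\epsilon \wedge (\cdot)$; finally it restricts to $\cap_j\{u_j > -k/(2m)\}$ where $\psi_k \ge 1/2$ and trades $T_k$ for $T_{k/(2m)}$ by one more locality swap. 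The continuous cutoff is the key device: multiplication by $\psi_k$ commutes with wedging by smooth forms and with weak limits, so all plurifine subtleties are absorbed at the $(m,m)$-level. Your argument instead works with the sharp cutoff $\ind_{\cap_j\{u_j>-k\}}$ and the decomposition $d v \wedge \dc v = \frac{1}{2}\ddc v^2 - v\,\ddc v$, applying plurifine locality termwise. This works, but you then owe the reader a couple of verifications that the paper's route sidesteps: that $v^2$ is a difference of bounded psh functions so that $\ddc v^2 \wedge T$ is a well-defined signed current (true --- for instance $v^2 = 2\varphi_1^2 + 2\varphi_2^2 - (\varphi_1+\varphi_2)^2$ after shifting $\varphi_1,\varphi_2$ to be nonnegative --- but it is a small lemma, not a one-liner), and that the plurifine locality you invoke covers comparison of $(m{+}1)$-fold non-pluripolar products in which the \emph{inner} factors $u_j$ are replaced by $u_{j,k}$ while a bounded outer factor is held fixed; this is contained in \cite[Theorem 2.9]{Viet-generalized-nonpluri}, but it is a stronger statement than the single-variable locality \eqref{eq-localplurifineddc} that you actually point at. Also, the aside that ``$\ddc$ of a psh function is determined locally in the plurifine topology by the function'' overstates how obvious that fact is: it is itself an instance of plurifine locality, not an a priori triviality, and in any case it is not needed, since the locality theorem is stated in terms of the potentials directly. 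Both proofs are sound; the paper's is leaner because the only plurifine input it needs is $\psi_k T = \psi_k T_k$, while yours avoids the regularization-in-$v$ step at the cost of more plurifine bookkeeping.
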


\proof Put 
$$\psi_k:= k^{-1}\max \{u_1+ \cdots+u_m, -k\}+1.$$
Observe $\psi_k T_k= \psi_k T$. Now regularizing $v$ and using the continuity of Monge-Amp\`ere operators of bounded potentials, we obtain
$$\psi_k d v \wedge \dc v \wedge T= \psi_k d v \wedge \dc v \wedge T_k.$$
Hence 
\begin{align*}
d v \wedge \dc v \wedge T =  d v \wedge \dc v \wedge T_k
\end{align*}
on $U:=\cap_{j=1}^m \{u_j >-k/(2m)\}$ (for $\psi_k \ge 1/2$ on $U$). Note that $ d v \wedge \dc v \wedge T_k=  d v \wedge \dc v \wedge T_{k/(2m)}$ on $U$ by the plurifine locality. Thus the desired assertion follows.    This finishes the proof. 
\endproof

Let $T_1,\ldots, T_m$ be closed positive $(1,1)$-currents on $X$.  Let $n:= \dim X$.  \emph{Consider now} 
$$T:= \langle T_1 \wedge \cdots \wedge T_m \rangle.$$
Note that $T$ has no mass on pluripolar sets. Let $\varphi_1,\varphi_2$ be negative quasi-psh function on $X$.  Let $\varphi_{j,k}$ ($j=1,2$) be as before and $v:=\varphi_1- \varphi_2$.  In the moment, we work locally. Let $U$ be an open  small enough local chart (biholomorphic to a polydisk in $\C^n$) in $X$ such that  $T_j= \ddc u_j$ for $j=1,\ldots, m$, where $u_j$ is negative psh functions on $U$. Put $u_{j,k}:= \max\{u_j, -k\}$ for $k \in \N$, and 
$$T_k:= \ddc u_{1,k} \wedge \cdots \wedge \ddc u_{m,k}, \quad Q'_k:= d v_k \wedge \dc v_k \wedge T_k.$$
Put $A_k:= \cap_{j=1}^2 \{\varphi_{j}> -k\} \cap \cap_{j=1}^m \{u_j >-k\}$. By plurifine properties of Monge-Amp\`ere operators, we have 
$$\bold{1}_{A_k}Q'_k= \bold{1}_{A_{k}} Q'_{s}$$
for every $s \ge k$. One can check that the condition that  $(\bold{1}_{A_k} Q'_k)_k$ is of mass bounded uniformly (on compact subsets in $U$) in $k$ is independent of the choice of potentials. 

\begin{proposition}\label{pro-haidinhnghiaddctuongduong}  The current $\bold{1}_{A_k} Q'_k$ is of mass bounded uniformly in $k$ on compact subsets in $U$ for every $U$ (small enough biholomorphic to a polydisk in $\C^n$) if and only if the current $\langle d v \wedge \dc v \dot{\wedge} T \rangle$ is well-defined. In this case we have 
\begin{align}\label{eq-haidinhnghiatuongduong}
\langle d v \wedge \dc v \dot{\wedge} T \rangle= \lim_{k \to \infty} \bold{1}_{A_k} Q'_k.
\end{align}
\end{proposition}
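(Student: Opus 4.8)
The plan is to reduce the global statement to the local comparison in Lemma \ref{le-bangnhauTnonpluripolar}, and then to reconcile the two candidate truncation schemes ($v_k = \varphi_{1,k} - \varphi_{2,k}$ versus $v$ itself, and $T_k$ built from $u_{j,k}$ versus $T$ itself). First I would observe that both the definition of well-definedness of $\langle dv \wedge \dc v \,\dot\wedge\, T\rangle$ and the hypothesis on $(\bold 1_{A_k} Q'_k)_k$ are purely local conditions: the current $T = \langle T_1 \wedge \cdots \wedge T_m\rangle$ has no mass on pluripolar sets, so $v$ is $T$-admissible and the construction in Subsection \ref{subsec-intebyparts} applies; moreover, by a partition of unity it suffices to work on a fixed local chart $U$ biholomorphic to a polydisk, on which $T_j = \ddc u_j$ with $u_j$ negative psh. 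On such a $U$, the quantity $Q_k = dv_k \wedge \dc v_k \wedge T$ from the general construction and the quantity $Q'_k = dv_k \wedge \dc v_k \wedge T_k$ from the present statement are the two objects to compare.

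The key step is to compare $\bold 1_{A_k} Q_k$ with $\bold 1_{A_k} Q'_k$. On the set $A_k = \cap_{j=1}^2\{\varphi_j > -k\} \cap \cap_{j=1}^m\{u_j > -k\}$ one has simultaneously $v_k = v$ (as $\varphi_{j,k} = \varphi_j$ there) and, by Lemma \ref{le-bangnhauTnonpluripolar} applied to the bounded dsh function $v_k$, the identity $dv_k \wedge \dc v_k \wedge T = dv_k \wedge \dc v_k \wedge T_k$ on $\cap_{j=1}^m\{u_j > -k\}$. Hence $\bold 1_{A_k} Q_k = \bold 1_{A_k} Q'_k$ as currents on $U$. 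Now the usual construction produces $\langle dv \wedge \dc v \,\dot\wedge\, T\rangle$ from the increasing (after multiplying by the relevant cutoff, via the plurifine locality \eqref{eq-localplurifineddc}) sequence $\bold 1_{\cap_{j=1}^2\{\varphi_j > -k\}} Q_k$, while here we have $\bold 1_{A_k} Q'_k$. The two differ only by the extra cutoff $\cap_{j=1}^m\{u_j > -k\}$; but since $T$ puts no mass on the pluripolar set $\cup_j\{u_j = -\infty\}$ and $\bold 1_{\{u_j > -k\}} \nearrow 1$ off that set, the masses of $\bold 1_{A_k} Q'_k$ and of $\bold 1_{\cap_{j=1}^2\{\varphi_j > -k\}} Q_k$ are comparable in the limit: uniform boundedness of one family (locally on $U$) is equivalent to uniform boundedness of the other. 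This gives the equivalence of the two hypotheses. For the identity \eqref{eq-haidinhnghiatuongduong}, once well-definedness holds, both families increase to the same limiting positive current because on any fixed plurifine-open piece $A_{k_0}$ they eventually agree (by the displayed plurifine-locality identities $\bold 1_{A_k} Q'_k = \bold 1_{A_{k_0}} Q'_{k_0}$ for $k \ge k_0$ and the analogous one for $Q_k$, combined with $\bold 1_{A_{k_0}} Q_{k_0} = \bold 1_{A_{k_0}} Q'_{k_0}$), and $\cup_{k_0} A_{k_0}$ exhausts the complement of a $T$-null pluripolar set; so the monotone limits coincide, and testing against bounded Borel forms with compact support yields $\langle dv \wedge \dc v \,\dot\wedge\, T\rangle = \lim_k \bold 1_{A_k} Q'_k$.

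The main obstacle I anticipate is bookkeeping the interplay of the two different cutoff sets and making sure that "adding the cutoff $\cap\{u_j > -k\}$" genuinely does not lose mass in the limit — i.e., that one cannot have mass escaping to the pluripolar locus $\{u_j = -\infty\}$ along the truncation. This is handled by the fact that $Q_k = dv_k \wedge \dc v_k \wedge T$ is taken against the fixed current $T$, which has no mass on pluripolar sets, so $\bold 1_{\{u_j \le -k\}} Q_k \to 0$; but one must phrase this carefully because $Q_k$ itself varies with $k$. The clean way is to fix $k_0$, restrict attention to $A_{k_0}$ (where everything is frozen by plurifine locality for $k \ge k_0$), let $k_0 \to \infty$, and only at the end invoke that $T(\cup_j\{u_j = -\infty\}) = 0$ together with monotone convergence. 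I would also note, for independence of the choice of local potentials $u_j$, that this is exactly the assertion recorded just before the statement of the Proposition, so it may be quoted rather than reproved.
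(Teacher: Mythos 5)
Your proposal is correct and takes a genuinely different route from the paper. The paper's proof replaces indicator functions by smooth cutoffs: in the forward direction it works with $\psi_k := k^{-1}\max\{\varphi_1+\varphi_2+u_1+\cdots+u_m, -k\}+1$ and the auxiliary current $Q''_k := \psi_k Q_k$, relating its mass to that of $\bold 1_{A_k} Q'_k$ via the estimate $\psi_k \ge 1 - s/k$ on $A_{s/(m+2)}$; in the backward direction it introduces a limit current $R$ with $\bold 1_{A_k} R = \bold 1_{A_k}Q'_k$ and a second cutoff $\tilde\psi_k := k^{-1}\max\{\varphi_1+\varphi_2,-k\}+1$, showing $\tilde\psi_k R = \tilde\psi_k\, dv_k\wedge\dc v_k\wedge T$ by a two-term splitting and plurifine locality. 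Your approach instead compares the truncated currents directly via the clean identity $\bold 1_{A_k}Q_k = \bold 1_{A_k}Q'_k$ (a correct application of Lemma~\ref{le-bangnhauTnonpluripolar}, using additionally that $v_k = v$ on $\{\varphi_1>-k\}\cap\{\varphi_2>-k\}$), then reduces everything to comparing $\bold 1_{B_k} Q_k$ with $\bold 1_{A_k}Q_k$ where $B_k := \{\varphi_1>-k\}\cap\{\varphi_2>-k\}$. This is shorter and avoids the scaling-cutoff bookkeeping; what the paper's formulation buys is that the same cutoffs $\psi_k$, $\tilde\psi_k$ are reused elsewhere in that framework.

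Two small points to tighten. First, the display in your last sentence should read $\bold 1_{A_{k_0}}Q'_k = \bold 1_{A_{k_0}}Q'_{k_0}$ for $k\ge k_0$ (the left-hand cutoff set is the fixed one). Second, your sketch for the hard direction of the mass equivalence ("fix $k_0$, restrict to $A_{k_0}$, let $k_0\to\infty$") does not quite close, because restricting to $A_{k_0}$ is a \emph{smaller} set than $B_{k_0}$ and so cannot directly bound $\bold 1_{B_{k_0}}Q_{k_0}$. The precise argument is: fix $k$, and for $s\ge k$ use plurifine locality on $B_k$ to write
$\bold 1_{B_k\cap\cap_j\{u_j>-s\}}Q_k = \bold 1_{B_k\cap\cap_j\{u_j>-s\}}Q_s \le \bold 1_{A_s}Q_s = \bold 1_{A_s}Q'_s$,
whose mass is uniformly bounded by hypothesis; then let $s\to\infty$ and invoke that $Q_k$ has no mass on the pluripolar set $\cup_j\{u_j=-\infty\}$ (because $T$ has none) to conclude that $\bold 1_{B_k}Q_k$ itself has mass bounded by the same constant. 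With this replacement, the argument is complete.
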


\proof   By writing a smooth form of bi-degree $(n-m-1,n-m-1)$ as the difference of two smooth positive forms, we can assume without loss of generality that $T$ is of bi-degree $(n-1,n-1)$ (hence $m=n-1$).  Assume that $\langle d v \wedge \dc v \dot{\wedge} T \rangle$ is well-defined. We will check that $\bold{1}_{A_k} Q'_k$ is of mass bounded uniformly in $k$ on compact subsets in $U$. Let $\chi$ be a nonnegative smooth function compactly supported on $U$. Put
 $$\psi:= \varphi_1+\varphi_2+ u_1 + \cdots + u_m, \quad \psi_k:= k^{-1}\max\{\psi, -k\}+1.$$
 and $\varphi_{jk}:= \max\{\varphi_j, -k\}$ for $1 \le j \le 2$. Observe that $0 \le \psi_k \le 1$ and  if $\psi_k >0$, then $\varphi_j>-k$ for $1 \le j \le 2$; and 
\begin{align}\label{limit-psiknon}
\psi_k(x) \ge 1- s/k
\end{align}
for  every $x \in A_{s/(m+2)}$ and $1 \le s \le k$.   Recall $v_k:= \varphi_{1k}- \varphi_{2k}$ which is bounded (but not necessarily uniformly in $k$). 
 Observe that $\langle d v \wedge \dc v \dot{\wedge} T \rangle$ has no mass on pluripolar sets because $T$ is so (see for example \cite[Lemma 2.1]{Viet-generalized-nonpluri}). Put $Q''_k:= \psi_k Q_k= \psi_k \bold{1}_{A_k} Q'_k$. By (\ref{limit-psiknon}) and Lemma \ref{le-bangnhauTnonpluripolar}, we have
\begin{align}\label{ine-dvdcvTlimitk} 
\langle d v \wedge \dc v \dot{\wedge} T \rangle &= \lim_{k\to \infty} \psi_k d v_k \wedge \dc v_k \wedge T\\
\nonumber
&= \lim_{k\to \infty} \psi_k d v_k \wedge \dc v_k \wedge T_k =\lim_{k\to \infty}Q''_k
\end{align}
on $U$. On the other hand, by (\ref{limit-psiknon}) again,  we see that the claim that $Q''_k$ is of mass uniformly bounded on compact subsets in $U$ is equivalent to that $\bold{1}_{A_k}Q'_k$ is so. This together with (\ref{ine-dvdcvTlimitk}) yields the desired assertion. 

Conversely, suppose now that  $\bold{1}_{A_k} Q'_k$ is of mass bounded uniformly in $k$ on compact subsets in $U$ for every $U$. Thus there exists a positive current $R$ on $U$ such that $\bold{1}_{A_k} R= \bold{1}_{A_k} Q'_k$ for every $k$ and $U$. Set 
$$\tilde{\psi} := \varphi_1+ \varphi_2, \quad \tilde{\psi}_k:= k^{-1} \max\{\tilde{\psi}, -k\}+1.$$
Let $s \in \N$ with $s \ge k$. Observe
\begin{align*}
\psi_{s} R=\tilde{\psi}_k  \psi_{s} R+ (1- \tilde{\psi}_k) \psi_{s} R.  
\end{align*}
The second term in the right-hand side of the last inequality tends to $0$ (uniformly in $s$) because $\tilde{\psi}_k$ converges pointwise to $1$ outside a pluripolar set and $R$ has no mass on pluripolar sets. Using Lemma \ref{le-bangnhauTnonpluripolar}, we have 
\begin{align*}
\tilde{\psi}_k  \psi_{s} R &= \tilde{\psi}_k  \psi_{s}  d v_{s} \wedge \dc v_{s} \wedge T_{s} \\
&= \tilde{\psi}_k  \psi_{s}  d v_{s} \wedge \dc v_{s} \wedge T= \tilde{\psi}_k  \psi_{s}  d v_{k} \wedge \dc v_{k} \wedge T,
\end{align*} 
here we used the plurifine topology properties with respect to $T$ (see \cite[Theorem 2.9]{Viet-generalized-nonpluri}), thanks to the fact that   $\varphi_{j,k} = \varphi_{j, s}$ on $\{\tilde{\psi}_k \not = 0\}$ for $j=1,2$ (recall $s \ge  k$), and they are bounded psh functions. Letting $s \to \infty$ gives
$$\tilde{\psi}_k R = \tilde{\psi}_k  \bold{1}_{\cup_{j=1}^m \{u_j > - \infty\}}  d v_{k} \wedge \dc v_{k} \wedge T=\tilde{\psi}_k  d v_{k} \wedge \dc v_{k} \wedge T$$
because the current $d v_{k} \wedge \dc v_{k} \wedge T$ has no mass on pluripolar sets. Now letting $k \to \infty$ gives the desired assertion. This finishes the proof.
\endproof

Thanks to Proposition \ref{pro-haidinhnghiaddctuongduong}, we can use the right-hand side of (\ref{eq-haidinhnghiatuongduong}) to define $\langle d v \wedge \dc v \dot{\wedge} T \rangle$ in the case where $T$ is the non-pluripolar product of some closed positive $(1,1)$-currents. By the same reason, in this case, we will use the expression $dv \wedge \dc w \wedge T_1 \wedge \ldots \wedge T_{n-1}$ to denote $\big\langle d v \wedge \dc w \dot{\wedge} \langle T_1 \wedge \cdots \wedge T_{n-1}\rangle \big\rangle$ whenever it is well-defined.

\subsection{Auxiliary facts on weights} \label{subsec-auxi}

In this subsection, we present some facts about weights needed for the proofs of main results.

Recall that  $\widetilde{\mathcal{W}}^-$ is the set of all convex, non-decreasing functions 
$\chi: \R_{\le 0}\rightarrow\R_{\le 0}$ such that $\chi(0)=0$ and $\chi \not \equiv 0$.  Let $M \ge 1$ be a constant and  $\mathcal{W}^+_M$ the usual space of  increasing concave functions $\chi: \R_{\le 0} \to \R_{ \le 0}$ such that $\chi(0)=0$, $\chi<0$ on $(-\infty, 0)$, and $|t \chi'(t)| \le M|\chi(t)|$ for every $t \le 0$. 
 We have the following lemmas. 

\begin{lemma} \label{lem approchi} Let $c>0$, $0<\delta<1$ and $\chi: \R\rightarrow\R$ such that 
	$\chi(t)=ct$ for every $t\geq-\delta$ and $\chi|_{(-\infty, 0]}\in \widetilde{\mathcal{W}}^-\cup \mathcal{W}^+_M$ ($M\geq 1$).  Let $g$ be a smooth radial cut-off function supported in $[-1,1]$ on $\R$, \emph{i.e,} $g(t)= g(-t)$ for $t \in \R$, $0 \le g \le 1$ and $\int_\R g(t) dt =1$. Put $g_\epsilon(t):=  \epsilon^{-1}g(\epsilon t)$ for every constant $\epsilon >0$ and $\chi_\epsilon:= \chi * g_\epsilon$ (the convolution of $\chi$ with $g_\epsilon$). Then  the following assertions are true:
	
	$(i)$ if $\chi|_{(-\infty, 0]} \in \widetilde{\mathcal{W}}^-$, then   $\chi_\epsilon|_{(-\infty, 0]}\in \widetilde{\mathcal{W}}^-$
	for every $0<\epsilon<\delta$,  $\chi_\epsilon\searrow \chi$ as $\epsilon\searrow 0$
	and $\sup (\chi_{\epsilon}-\chi)\leq c\epsilon$;
	
	$(ii)$ if $\chi|_{(-\infty, 0]} \in \mathcal{W}^+_M$ and $0<\epsilon<\delta^2/2$ then  
	$\chi_\epsilon|_{(-\infty, 0]}\in \mathcal{W}^+_{M/(1-\delta)}$. Moreover, if $0<\epsilon<\delta^2/8$ then
	$$\overline{\chi}_{\epsilon}:=\chi_{\epsilon}( \cdot +\epsilon)-c\epsilon\in \mathcal{W}^+_{M/(1-\delta)^2}, \quad 
	\overline{\chi}_{\epsilon}\geq\chi-c\epsilon,$$
	 and $\overline{\chi}_{\epsilon}$ converges uniformly to $\chi$ as $\epsilon \to 0$ on compact subsets in $\R$.
\end{lemma}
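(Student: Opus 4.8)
The plan is to treat the two cases separately, both by direct computation with the convolution $\chi_\epsilon = \chi * g_\epsilon$, exploiting that $g_\epsilon$ is an even mollifier supported in $[-1/\epsilon\cdot\epsilon,\dots]$—more precisely, $g_\epsilon$ is supported in $[-1,1]$ after the rescaling, so $\chi_\epsilon(t) = \int_{-1}^1 \chi(t-s)g_\epsilon(s)\,ds$ only sees values of $\chi$ within distance $\epsilon$ of $t$. First I would record the elementary stability properties of convolution: if $\chi$ is convex (resp. concave) then so is $\chi_\epsilon$; if $\chi$ is non-decreasing then so is $\chi_\epsilon$; and $\chi_\epsilon$ is smooth. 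For $(i)$, on the interval $t \ge -\delta + \epsilon > -\delta$ we have $\chi(t-s) = c(t-s)$ for $|s| \le \epsilon$, hence $\chi_\epsilon(t) = ct - c\int s\, g_\epsilon(s)\,ds = ct$ by evenness of $g$; in particular $\chi_\epsilon(0) = 0$. Convexity of $\chi$ gives $\chi_\epsilon \ge \chi$ pointwise (Jensen), so $\chi_\epsilon$ maps $\R_{\le 0}$ into $\R_{\le 0}$ on the region where $\chi_\epsilon \le 0$; combined with $\chi_\epsilon(0)=0$ and monotonicity this shows $\chi_\epsilon|_{(-\infty,0]} \in \widetilde{\mathcal W}^-$ once we also check $\chi_\epsilon \le 0$ on $(-\infty,0]$, which follows since $\chi_\epsilon(t) = ct \le 0$ near $0$ and $\chi_\epsilon$ is non-decreasing. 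The monotonicity $\chi_\epsilon \searrow \chi$ as $\epsilon \searrow 0$ is the standard fact that mollifying a convex function from above decreases to it (for $\epsilon' < \epsilon$, convexity of $t \mapsto \chi_{\epsilon'}(t)$ plus a Jensen argument). Finally $\sup(\chi_\epsilon - \chi) \le c\epsilon$: write $\chi_\epsilon(t) - \chi(t) = \int (\chi(t-s) - \chi(t)) g_\epsilon(s)\,ds$; by convexity $\chi(t-s) - \chi(t) \le -s\,\chi'(t^-)$ when... rather, use that $\chi$ is $c$-Lipschitz from above near where it differs—more carefully, since $\chi$ has slope between its left and right derivatives and $\chi(t) = ct$ for $t$ near $0$, the slope of $\chi$ is at most $c$ everywhere (as $\chi$ convex non-decreasing with asymptotic linear part of slope $\le c$), wait—actually $\chi$ convex non-decreasing with $\chi(t)=ct$ near $0$ forces $\chi' \le c$ on all of $(-\infty,0]$, so $\chi$ is globally $c$-Lipschitz, giving $|\chi_\epsilon(t) - \chi(t)| \le c\epsilon$; combined with $\chi_\epsilon \ge \chi$ this yields the claim.

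For $(ii)$, the point is to control the ratio $|t\chi_\epsilon'(t)|/|\chi_\epsilon(t)|$. I would first verify $\chi_\epsilon|_{(-\infty,0]} \in \mathcal{W}^+_{M/(1-\delta)}$: concavity and monotonicity of $\chi_\epsilon$ are inherited, $\chi_\epsilon(0) = 0$ as above (using $\epsilon < \delta^2/2 < \delta$ and evenness), and $\chi_\epsilon < 0$ on $(-\infty,0)$ follows from $\chi < 0$ there together with $\chi_\epsilon \le \chi$ (Jensen for concave $\chi$). The homogeneity-type bound $|t\chi_\epsilon'(t)| \le \frac{M}{1-\delta}|\chi_\epsilon(t)|$ is the crux: for $t \in [-\delta,0]$ one has $\chi_\epsilon(t) = ct$ so $t\chi_\epsilon'(t) = ct = \chi_\epsilon(t)$ and the bound is trivial (with constant $1 \le M/(1-\delta)$); for $t \le -\delta$ I would write $\chi_\epsilon'(t) = \int \chi'(t-s) g_\epsilon(s)\,ds$ (valid a.e., extending to all $t$ by continuity of $\chi_\epsilon'$), bound $|\chi'(t-s)| \le \frac{M}{|t-s|}|\chi(t-s)|$, note $|t-s| \ge |t| - \epsilon \ge |t|(1 - \epsilon/\delta) \ge |t|(1-\delta)$ since $\epsilon < \delta^2 \le \delta\cdot|t|/|t|\cdot\delta$... more simply $\epsilon < \delta^2/2 < \delta^2 \le \delta|t|$ forces $\epsilon/|t| < \delta$, hence $|t-s| \ge |t|-\epsilon > |t|(1-\delta)$; and $|\chi(t-s)| \le |\chi(t)|$ (wait, $\chi$ is non-decreasing so $t - s \le t + \epsilon$, need care) — here I would use that $\chi$ concave non-decreasing negative gives $|\chi(t-s)| \le |\chi(t)| + \epsilon|\chi'(t)|$, absorb the error using the $\mathcal{W}^+_M$ bound again, and then divide by $|\chi_\epsilon(t)| \ge |\chi(t)|$ (from $\chi_\epsilon \le \chi < 0$). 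Assembling these with the constraint $\epsilon < \delta^2/2$ yields the factor $1/(1-\delta)$.

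For the shifted weight $\overline\chi_\epsilon(t) = \chi_\epsilon(t+\epsilon) - c\epsilon$, I would observe: at $t = 0$, $\chi_\epsilon(\epsilon) = c\epsilon$ (since $\epsilon > -\delta + \epsilon$ puts us in the linear region when $\epsilon < \delta$), so $\overline\chi_\epsilon(0) = 0$; concavity, monotonicity are preserved under affine reparametrization; $\overline\chi_\epsilon < 0$ on $(-\infty, 0)$ follows from the strict concavity/monotonicity as before; the inequality $\overline\chi_\epsilon \ge \chi - c\epsilon$ follows from $\chi_\epsilon(t+\epsilon) \ge \chi(t+\epsilon) \ge \chi(t)$ (Jensen then monotonicity). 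For the $\mathcal{W}^+$-bound with constant $M/(1-\delta)^2$: $\overline\chi_\epsilon'(t) = \chi_\epsilon'(t+\epsilon)$, and I apply part $(ii)$'s first bound at the point $t+\epsilon$, namely $|(t+\epsilon)\chi_\epsilon'(t+\epsilon)| \le \frac{M}{1-\delta}|\chi_\epsilon(t+\epsilon)|$; then convert $|t+\epsilon|$ to $|t|$ (losing another factor $1/(1-\delta)$ using $\epsilon < \delta^2/8 < \delta|t|$ in the relevant range $t \le -\delta$, and checking the region $-\delta \le t \le 0$ separately where $\overline\chi_\epsilon$ is essentially linear) and $|\chi_\epsilon(t+\epsilon)| \le |\overline\chi_\epsilon(t)| + c\epsilon$, again absorbing the $c\epsilon$ error via the bound already obtained. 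Uniform convergence $\overline\chi_\epsilon \to \chi$ on compact subsets of $\R$ follows by combining $\chi - c\epsilon \le \overline\chi_\epsilon \le \chi(\cdot + \epsilon) - c\epsilon + (\chi_\epsilon(\cdot+\epsilon)-\chi(\cdot+\epsilon))$ with the Lipschitz-type bound $0 \le \chi_\epsilon - \chi \le C\epsilon$ on compacta (where $C$ depends on the local Lipschitz constant of $\chi$, finite since $\chi$ is concave hence locally Lipschitz) and continuity of $\chi$.

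\textbf{Main obstacle.} The delicate point is the bookkeeping in $(ii)$ showing that the small shift $\epsilon$ in the argument and the $c\epsilon$ correction term only degrade the $\mathcal{W}^+$-constant by the factors $1/(1-\delta)$ (resp. $1/(1-\delta)^2$) and nothing worse — this requires using the constraints $\epsilon < \delta^2/2$ and $\epsilon < \delta^2/8$ to guarantee that $\epsilon$ is genuinely small relative to $|t|$ on the range $t \le -\delta$, so that $|t \pm \epsilon|$ and $|\chi(t)|$, $|\chi_\epsilon(t)|$, $|\chi(t\pm\epsilon)|$ are all comparable up to the stated factors, and carefully treating the transition region $-\delta \le t \le 0$ (where $\chi$ and its mollifications are exactly linear) as a separate trivial case. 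Everything else is routine convexity/mollification bookkeeping.
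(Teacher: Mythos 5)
Your part $(i)$ is fine and in fact more self-contained than the paper's treatment, which simply cites an external lemma; your Jensen/Lipschitz argument ($\chi_\epsilon\ge\chi$ by convexity, $0\le\chi'\le c$ globally because a convex non-decreasing slope that equals $c$ near $0$ can only decrease as $t\to-\infty$, hence $\sup(\chi_\epsilon-\chi)\le c\epsilon$) is correct, and the monotone decrease in $\epsilon$ follows from the even-mollifier identity you implicitly invoke.

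The genuine gap is in part $(ii)$, in the step where you ``bound $\abs{\chi(t-s)}\le\abs{\chi(t)}+\epsilon\abs{\chi'(t)}$.'' For a concave $\chi$ this inequality is \emph{false}: concavity of $\chi$ means $-\chi$ is convex, so for $s\in(0,\epsilon]$ one has $-\chi(t-s)\ge-\chi(t)+s\chi'(t)$, i.e.\ $\abs{\chi(t-s)}\ge\abs{\chi(t)}+s\abs{\chi'(t)}$, the reverse of what you wrote (try $\chi(t)=-t^2$). Even if you replace $\chi'(t)$ by $\chi'(t-\epsilon)$ to make the direction correct, you then have to close the inequality by applying the $\mathcal{W}^+_M$ bound to $\chi'(t-\epsilon)$, which produces a factor of the form $\bigl(1-\epsilon M/\abs{t}\bigr)^{-1}$; that factor depends on $M$ and cannot be absorbed into the clean $1/(1-\delta)$ stated in the lemma. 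The underlying problem is that comparing $\abs{\chi(t-s)}$ with $\abs{\chi(t)}$ pointwise is both unnecessary and costly. The step that makes the paper's constant come out right is to keep the $s$-integral intact: since $\chi(t-s)<0$ on the whole support of $g_\epsilon$ once $t\le-\delta/2$ and $\epsilon<\delta^2/2$, one has
\[
\chi_\epsilon'(t)\le\int_{-\epsilon}^{\epsilon}\frac{M\chi(t-s)}{t-s}\,g_\epsilon(s)\,ds
\le\int_{-\epsilon}^{\epsilon}\frac{M\chi(t-s)}{t+\epsilon}\,g_\epsilon(s)\,ds
=\frac{M\chi_\epsilon(t)}{t+\epsilon}
=\frac{Mt}{t+\epsilon}\cdot\frac{\chi_\epsilon(t)}{t},
\]
and the only loss is in the scalar $Mt/(t+\epsilon)\le M/(1-\delta)$; no pointwise bound on $\abs{\chi(t-s)}$ by $\abs{\chi(t)}$ ever enters. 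You already have all the ingredients for this (you wrote the lower bound $\abs{t-s}\ge\abs{t}(1-\delta)$); you just need to notice that $\int\abs{\chi(t-s)}g_\epsilon(s)\,ds$ is exactly $\abs{\chi_\epsilon(t)}$ and stop there. The same remark applies to your treatment of $\overline\chi_\epsilon$: your planned detour through ``$\abs{\chi_\epsilon(t+\epsilon)}\le\abs{\overline\chi_\epsilon(t)}+c\epsilon$, absorb the $c\epsilon$ error'' is again unnecessary, since $\chi_\epsilon(t+\epsilon)=\overline\chi_\epsilon(t)+c\epsilon$ with both sides negative for $t\le-\delta$ gives $\abs{\chi_\epsilon(t+\epsilon)}\le\abs{\overline\chi_\epsilon(t)}$ outright. (The paper sidesteps all of this by observing $\overline\chi_\epsilon=h_\epsilon*g_\epsilon$ with $h_\epsilon(t)=\chi(t+\epsilon)-c\epsilon$, verifying that $h_\epsilon\in\mathcal{W}^+_{M/(1-\delta)}$ is linear on $[-\delta/2,0]$, and re-applying the first claim with $(\delta,M)$ replaced by $(\delta/2,M/(1-\delta))$; your shift-the-estimate route is workable, but only after the $c\epsilon$-absorption is removed.) One last small imprecision: $\chi_\epsilon$ is linear on $[-\delta+\epsilon,0]$, not on all of $[-\delta,0]$, so splitting the range at $-\delta$ leaves a sliver uncovered; the paper splits at $-\delta/2$, which handles the transition cleanly.
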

\begin{proof}
	The part $(i)$ follows from \cite[Lemma 2.1]{Vu_Do-MA}. The part $(ii)$ can be obtained more or less by similar arguments as in the last reference. We provide details for readers' convenience. 	
	It is clear that $\chi_{\epsilon}$ is a concave, increasing function with $\chi_{\epsilon}(0)=0$.
	 We will show that
	 \begin{equation}\label{eq1 lem2.7}
	 	\chi_{\epsilon}'(t)\leq\dfrac{M}{1-\delta}\dfrac{\chi_{\epsilon}(t)}{t},
	 \end{equation}
	for every $t<0$ and $0<\epsilon<\delta^2/2$.
	
	If $t<-\dfrac{\delta}{2}$ then we have
	\begin{align*}
	\chi_{\epsilon}'(t)=\int_{-\epsilon}^{\epsilon}\chi'(t-s)g_{\epsilon}(s)ds
		\leq\int_{-\epsilon}^{\epsilon}\dfrac{M \chi (t-s)}{t-s}g_{\epsilon}(s)ds
		&\leq \int_{-\epsilon}^{\epsilon}\dfrac{M \chi (t-s)}{t+\epsilon}g_{\epsilon}(s)ds\\
		&=\dfrac{M\chi_{\epsilon}(t)}{t+\epsilon}\\
		&=\dfrac{M t}{t+\epsilon}\dfrac{\chi_{\epsilon}(t)}{t}\\
		&\leq \dfrac{M}{1-\delta}\dfrac{\chi_{\epsilon}(t)}{t},
	\end{align*}
	for every $0<\epsilon<\delta^2/2$.
	
On the other hand, if $t\geq -\dfrac{\delta}{2}$, then  $\chi_{\epsilon}(t)=\chi(t)=ct$ for every  $0<\epsilon<\delta^2/2$. As a consequence, we have
$$\chi_{\epsilon}'(t)=\chi'(t)\leq \dfrac{M\chi(t)}{t}=M\dfrac{\chi_{\epsilon}(t)}{t}.$$
Thus, \eqref{eq1 lem2.7} follows.	Hence, $\chi_\epsilon|_{(-\infty, 0]}\in \mathcal{W}^+_{M/(1-\delta)}$.

Now, we consider $\overline{\chi}_{\epsilon}$. Since $\chi$ is increasing, one sees that 	$\overline{\chi}_{\epsilon}\geq\chi-c\epsilon$ and $\overline{\chi}_{\epsilon}$ converges uniformly to $\chi$ as $\epsilon \to 0$ on compact subsets in $\R$. It remains
to show that
$\overline{\chi}_{\epsilon}\in \mathcal{W}^+_{M(1+\delta)/(1-\delta)}$ for every $0<\epsilon<\delta^2/8$.
Note that
$$\overline{\chi}_{\epsilon}=h_{\epsilon}* g_{\epsilon},$$
where $h_{\epsilon}(t)=\chi(t+\epsilon)-c\epsilon$. The function $\overline{\chi}_\epsilon(t)$ is concave, increasing
and $\overline{\chi}+\epsilon(0)=0$.

 If $-\delta/2\leq t<0$ then  $h_{\epsilon}(t)=\chi(t)=ct$ for every $0<\epsilon<\delta^2/2$. Therefore
 $$h_{\epsilon}'(t)=\chi'(t)\leq \dfrac{M\chi(t)}{t}=M\dfrac{h_{\epsilon}(t)}{t}.$$
 If $t<-\delta/2$ then 
 \begin{align*}
 	h_{\epsilon}'(t)=\chi'(t+\epsilon)\leq M\dfrac{\chi (t+\epsilon)}{t+\epsilon}
 	\leq M\dfrac{\chi(t+\epsilon)-c\epsilon}{t+\epsilon}
 	=M\dfrac{h_{\epsilon}(t)}{t+\epsilon}
 	&=\dfrac{M t}{t+\epsilon}\dfrac{h_{\epsilon}(t)}{t}\\ 	
 	&\leq \dfrac{M }{1-\delta}\dfrac{h_{\epsilon}(t)}{t},
 \end{align*}
 for every $0<\epsilon<\delta^2/2$.
 
 Then, for every $0<\epsilon<\delta^2/2$, we have $h_{\epsilon}\in \mathcal{W}^+_{M/(1-\delta)}$ 
 and $h_{\epsilon}=ct$ for every $t\geq-\delta/2$. Hence, for every $0<\epsilon<\delta^2/8$, we have
 $$\overline{\chi}_{\epsilon}=h_{\epsilon}* g_{\epsilon}\in 
  \mathcal{W}^+_{\frac{M}{(1-\delta)(1-\delta/2)}}
  \subset  \mathcal{W}^+_{\frac{M}{(1-\delta)^2}}.$$
 The proof is completed.
\end{proof}

\begin{lemma}\label{le-regularizedchi}
	Let $\chi, \tilde{\chi}\in \widetilde{\mathcal{W}}^-\cup \mathcal{W}^+_M$ ($M \ge 1$) such that $\tilde{\chi} \le \chi$.	Then, there exist sequences of functions
	$\chi_j, \tilde{\chi}_j\in\widetilde{\mathcal{W}}^-\cup \mathcal{W}^+_{M_j}$
(with $M_j\searrow M$ as $j\to\infty$ )	satisfying the following conditions:
	\begin{itemize}
		\item 	$\chi_j\in \Cc^{\infty}(\R)$ for every $j$;
		\item  $\chi_j\geq\tilde{\chi}_j$ and $\chi_j\geq\chi-2^{-j}$ for every $j$ big enough;
		\item  $\tilde{\chi}-2^{-j}\leq\tilde{\chi}_j\leq\tilde{\chi}$ on $(-\infty, -1]$ for every $j$ big enough;
		\item  	$\chi_j$ converges uniformly to $\chi$ on compact subsets in $\R_{\le 0}$.
	\end{itemize}
\end{lemma}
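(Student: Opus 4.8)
The plan is to handle the two cases $\chi \in \widetilde{\mathcal{W}}^-$ and $\chi \in \mathcal{W}^+_M$ essentially separately, since in both cases the construction is a routine regularization by convolution but the monotonicity classes behave differently. First I would recall from Lemma \ref{lem approchi} that if one replaces $\chi$ near the origin by the linear function $c\, t$ with $c := \chi'(0^-)$ (the left derivative, which exists by convexity/concavity and is finite since $\chi(0)=0$ and $\chi$ takes finite values) on an interval $[-\delta,0]$, one gets a function $\chi^\sharp$ on all of $\R$ that agrees with $\chi$ on $(-\infty,-\delta]$ (up to adjusting $\delta$) and lies in the relevant class. Actually the cleanest route: for $\chi \in \widetilde{\mathcal{W}}^-$, extend $\chi$ to $\R$ by setting it linear with slope $\chi'(0^-)$ on $[0,\infty)$; then $\chi$ is globally convex on $\R$ and agrees with the original on $\R_{\le 0}$. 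Similarly for $\mathcal{W}^+_M$, extend by the tangent line at $0$. Then apply the convolution $\chi_\epsilon := \chi * g_\epsilon$ of Lemma \ref{lem approchi}; part $(i)$ gives, in the $\widetilde{\mathcal{W}}^-$ case, $\chi_\epsilon \in \widetilde{\mathcal{W}}^-$, $\chi_\epsilon \searrow \chi$, and $\sup(\chi_\epsilon - \chi) \le c\epsilon$; part $(ii)$ gives, in the $\mathcal{W}^+_M$ case after the shift-and-subtract trick, $\overline{\chi}_\epsilon \in \mathcal{W}^+_{M/(1-\delta)^2}$, $\overline{\chi}_\epsilon \ge \chi - c\epsilon$, and uniform convergence on compacts.

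Next I would build the two sequences simultaneously. Take $\epsilon_j \to 0$ fast enough and set $\chi_j$ to be the appropriate regularization of $\chi$ pushed up by a small constant: concretely, in the $\widetilde{\mathcal{W}}^-$ case set $\chi_j := \chi_{\epsilon_j} \in \Cc^\infty$, which already satisfies $\chi_j \ge \chi$, hence trivially $\chi_j \ge \chi - 2^{-j}$, once $c\epsilon_j$ is irrelevant (it's $\ge$, not just $\ge -2^{-j}$); in the $\mathcal{W}^+_M$ case set $\chi_j := \overline{\chi}_{\epsilon_j} + 2^{-j}$ truncated appropriately, or more simply $\chi_j := \overline{\chi}_{\epsilon_j}$ with $c\epsilon_j < 2^{-j}$ so that $\chi_j \ge \chi - c\epsilon_j \ge \chi - 2^{-j}$; smoothness of $\overline{\chi}_\epsilon$ comes for free from the convolution. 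To get $M_j \searrow M$ one lets $\delta = \delta_j \to 0$ in part $(ii)$ so that $M/(1-\delta_j)^2 \searrow M$. For $\tilde\chi_j$, regularize $\tilde\chi$ the same way but now pushing \emph{down}: in the $\widetilde{\mathcal{W}}^-$ case the convolution $\tilde\chi_{\epsilon}$ satisfies $\tilde\chi_\epsilon \ge \tilde\chi$, so to land below $\tilde\chi$ on $(-\infty,-1]$ I would instead use a downward shift, $\tilde\chi_j := \tilde\chi_{\epsilon_j}(\cdot - \epsilon_j)$ or subtract $c\epsilon_j$, arranging $\tilde\chi - 2^{-j} \le \tilde\chi_j \le \tilde\chi$ on $(-\infty,-1]$ (on that ray the error from convolution plus shift is controlled by $\epsilon_j \cdot \sup_{[-1,\infty)} |\tilde\chi'|$, which is finite); in the $\mathcal{W}^+_M$ case the function $\overline{\chi}_\epsilon$ of part $(ii)$ already satisfies $\overline{\chi}_\epsilon \ge \chi - c\epsilon$ but we want $\le \tilde\chi$, so again subtract a small constant. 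The remaining clause $\chi_j \ge \tilde\chi_j$ then follows because $\chi_j$ approximates $\chi$ from above (up to $2^{-j}$) while $\tilde\chi_j$ approximates $\tilde\chi \le \chi$ from below, and one checks the gap $\chi - \tilde\chi$ dominates the two small errors for $j$ large; this may require handling the region near $0$ where both regularizations coincide with linear functions, but there the inequality $\chi \ge \tilde\chi$ at slopes at $0$ handles it.

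The main obstacle I anticipate is the bookkeeping around the origin $t = 0$ and the interface between the linear piece and the genuinely convex/concave piece: the classes $\widetilde{\mathcal{W}}^-$ and $\mathcal{W}^+_M$ are defined on $\R_{\le 0}$ with the normalization $\chi(0)=0$, and the convolution lives on $\R$, so one must be careful that (a) after regularization one still has value $0$ at $0$ — this is why Lemma \ref{lem approchi} builds in the linear modification near $0$ and the subtraction of $c\epsilon$; (b) the defect estimates ($\sup(\chi_\epsilon - \chi) \le c\epsilon$ etc.) are only stated for the linearly-modified $\chi$, so I must first replace the given $\chi,\tilde\chi$ by such modifications and verify that modifying on $[-\delta,0]$ doesn't spoil $\tilde\chi \le \chi$ (it doesn't, since both become the respective tangent lines at $0$, and $\tilde\chi'(0^-) \le \chi'(0^-)$ follows from $\tilde\chi \le \chi$ and $\tilde\chi(0)=\chi(0)=0$). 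A second, minor obstacle is that $\chi$ could be one of $\widetilde{\mathcal{W}}^-$ while $\tilde\chi$ is in $\mathcal{W}^+_M$ or vice versa; since $\tilde\chi \le \chi$ and $\mathcal{W}^+_M$ functions dominate their linear part near $0$ while $\widetilde{\mathcal{W}}^-$ functions are dominated by it, the mixed cases are actually the easy ones, and I would dispose of them with a short separate remark. I expect $f$-iteration and the $M_j \searrow M$ choice to be entirely mechanical once the single-function regularization lemma is in hand, so the proof should be short: essentially "apply Lemma \ref{lem approchi} twice, once up and once down, with a diagonal choice of parameters, and check the four bulleted inequalities."
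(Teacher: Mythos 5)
Your outline follows the paper's overall strategy (linearize $\chi$ near $0$, then mollify via Lemma \ref{lem approchi}), but there are two concrete gaps, of which the first is the more serious.

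\textbf{The slope you linearize with is wrong, and need not even exist.} You propose to replace $\chi$ near $0$ by the tangent line $t\mapsto \chi'(0^-)\,t$, and you assert that $\chi'(0^-)$ ``is finite since $\chi(0)=0$ and $\chi$ takes finite values.'' That inference is incorrect for $\chi\in\widetilde{\mathcal{W}}^-$: the convex weight $\chi(t)=-\sqrt{-t}$ lies in $\widetilde{\mathcal{W}}^-$, yet $\chi'(t)=\tfrac12(-t)^{-1/2}\to+\infty$ as $t\to 0^-$, so $\chi'(0^-)=+\infty$ and there is no tangent line to linearize against. Even when $\chi'(0^-)$ is finite, the tangent is still the wrong line: for strictly convex $\chi$ one has $\chi(t)>\chi'(0^-)\,t$ for all $t<0$, so if you patch the two pieces there is no $\delta$ at which they match continuously, and if you instead take $\max\{\chi,\chi'(0^-)\,t\}$ you just recover $\chi$ unchanged. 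The paper avoids both problems by using the \emph{chord} slope $c_j=-\chi(-2^{-j})/2^{-j}$, which is always finite and for which $\overline{\chi}_j=\max\{\chi,c_jt\}$ is automatically continuous, equal to $c_jt$ exactly on $[-2^{-j},0]$ and to $\chi$ on $(-\infty,-2^{-j}]$ (by convexity). That $\overline{\chi}_j$ is then literally linear on a neighborhood of $0$, so the mollification $\chi_j=\overline{\chi}_j*g_{4^{-j-1}}$ satisfies $\chi_j(0)=0$ for free. Your scheme, by contrast, will generically produce $\chi_\epsilon(0)\ne 0$, breaking membership in $\widetilde{\mathcal{W}}^-\cup\mathcal{W}^+_{M_j}$.

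\textbf{The regularization of $\tilde\chi$ is both unnecessary and ill-posed as stated.} Note that the lemma demands smoothness only of $\chi_j$, not of $\tilde\chi_j$. The paper exploits this: in Case 1 ($\chi\in\widetilde{\mathcal{W}}^-$) it simply takes $\tilde\chi_j:=\tilde\chi$, which trivially satisfies the sandwich bounds and $\chi_j\ge\overline{\chi}_j\ge\chi\ge\tilde\chi=\tilde\chi_j$; in Case 2 ($\chi\in\mathcal{W}^+_M$) it takes $\tilde\chi_j:=\min\{\tilde\chi,\chi_j\}$, which is automatically $\le\chi_j$, stays above $\tilde\chi-2^{-j}$ because $\chi_j\ge\chi-2^{-j}\ge\tilde\chi-2^{-j}$, and remains concave since it is a min of concave functions. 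Your alternative — convolve $\tilde\chi$ and then subtract a small constant or shift leftward to get below $\tilde\chi$ on $(-\infty,-1]$ — produces a function with $\tilde\chi_j(0)\ne 0$, which again disqualifies it from $\widetilde{\mathcal{W}}^-\cup\mathcal{W}^+_{M_j}$; the restriction ``on $(-\infty,-1]$'' in the conclusion does not relieve you of the global normalization required for membership in the class. The cure is simply to not regularize $\tilde\chi$ at all, as the paper does. The final ``diagonal choice of parameters'' step you anticipate is indeed mechanical once the single-function construction is right, but the construction itself needs the chord, not the tangent.
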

\begin{proof}
 We  split the proof into two cases.\\
 
 \noindent
{\bf Case 1}:  $\chi\in \widetilde{\mathcal{W}}^-$.\\
For every $j\geq 1$, we denote 
$$\overline{\chi}_j(t)=\begin{cases}
	\max\{\chi(t), c_jt\}\qquad\mbox{if}\quad t<0,\\
	c_jt\qquad\mbox{if}\quad t\geq 0,
\end{cases}$$
where $$c_j:=\dfrac{-\chi(-2^{-j})}{2^{-j}} \cdot$$
Then $\overline{\chi}_j$ satisfies the hypothesis of Lemma \ref{lem approchi} for $\delta:= 2^{-j}$.
Let $g$ be a smooth radial cut-off function supported in $[-1,1]$ on $\R$, \emph{i.e,} $g(t)= g(-t)$ for $t \in \R$, $0 \le g \le 1$ and $\int_\R g(t) dt =1$. For every $j\geq 1$, we define
\begin{center}
	$\chi_j=\overline{\chi}_j * g_{4^{-j-1}}\quad$ and $\quad \tilde{\chi}_j=\tilde{\chi}$.
\end{center}
By Lemma \ref{lem approchi}, we have $\chi_j$ and $\tilde{\chi}_j$ satisfy the desired conditions.\\

\noindent
{\bf Case 2}:  $\chi\in\mathcal{W}^+_M$.\\
Since $\chi \ge \tilde{\chi}$, we also have $\tilde{\chi}\in\mathcal{W}^+_M$.
Assume that $g$ and $c_j$ are as in Case 1.
 For every $j\geq 1$, we define
 $$\overline{\chi}_j(t)=\begin{cases}
 	\min\{\chi(t), c_jt\}\qquad\mbox{if}\quad t<0,\\
 	c_jt\qquad\mbox{if}\quad t\geq 0,
 \end{cases}$$
and
	$$\chi_j(t)=(\overline{\chi}_j(\cdot+4^{-j-1}) * g_{4^{-j-1}})(t)-c_j4^{-j-1}.$$
We also denote
 $\tilde{\chi}_j(t)=\min\{\tilde{\chi}(t), \chi_j(t)\}$.
  By  Lemma \ref{lem approchi}, we have $\chi_j$ and $\tilde{\chi}_j$ satisfy the desired conditions. The proof is completed.
\end{proof}

Let $\phi$ be a negative $\theta$-psh function. We denote by $\PSH(X, \theta, \phi)$ the set of $\theta$-psh functions $u \le \phi$. Recall that by monotonicity, we always have $\int_X \theta^n_u \le \int_X \theta^n_\phi$, where for every $\theta$-psh function $v$, we put $\theta_v:= \ddc v +\theta$.  We also define by $\mathcal{E}(X,\theta, \phi)$ the set of $u \in \PSH(X, \theta, \phi)$ of full Monge-Amp\`ere mass with respect to $\phi$, \emph{i.e,} $\int_X\theta_u^n = \int_X \theta_\phi^n.$ 

Let $\chi \in \widetilde{\mathcal{W}}^- \cup \mathcal{W}^+_M$, and $u \in \PSH(X, \theta, \phi)$. We put
$$E_{\chi, \theta,\phi}(u):= \int_X - \chi(u- \phi) \theta_u^n.$$
We also define by $\mathcal{E}_\chi(X, \theta,\phi)$ the set of $u \in \mathcal{E} (X, \theta, \phi)$ with $E_{\chi, \theta, \phi}(u)<\infty$.

\begin{lemma} \label{le-sosanhnangnluongintegrabig} Let $\chi \in \widetilde{\mathcal{W}}^-\cup \mathcal{W}^+_M$ and $u_1,u_2 \in \mathcal{E}_\chi(X, \theta, \phi)$. Then there exists a constant $C_1>0$ depending only on $n$ and $M$
	 such that
$$- \int_X \chi(u_1- \phi) \theta_{u_2}^n \le C_1 \sum_{j=1}^2 E_{\chi, \phi, \theta}(u_j),$$
and 
$$E_{\chi,\theta,\phi}\big(au_1+(1-a)u_2\big) \le C_1 \sum_{j=1}^2 E_{\chi,\theta,\phi}(u_j),$$
for every $0<a<1$.
Furthermore if $u_1 \ge u_2$, then 
$$E_{\chi, \phi, \theta}(u_1) \le C_2 E_{\chi, \phi,\theta} (u_2),$$
for some constant $C_2$ depending only on $n$ and $M$.
\end{lemma}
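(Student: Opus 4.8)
The plan is to prove all three inequalities of Lemma \ref{le-sosanhnangnluongintegrabig} by the ``comparison of energies'' technique that is standard for weighted classes (cf. \cite{GZ-weighted,Viet-convexity-weightedclass}), and which only uses two tools: the monotonicity of Monge-Amp\`ere masses along decreasing sequences (so that one may assume $u_1,u_2-\phi$ bounded and pass to the limit at the end), and repeated integration by parts in the form of Theorem \ref{th-integrabypart}. Throughout I write $v_j:=u_j-\phi$, so $0\ge v_j$ and $\theta_{u_j}=\theta_\phi+\ddc v_j$; note $v_j$ is dsh and, since $\theta_\phi^n$ charges no pluripolar set only when $\phi$ has suitable singularities, I should rather work with the mixed masses $\theta_{u_1}^k\wedge\theta_{u_2}^{n-k}$, all of which put no mass on pluripolar sets because $u_j\in\mathcal E(X,\theta,\phi)$. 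The key elementary fact about the weights is: for $\chi\in\widetilde{\mathcal W}^-$, convexity gives $\chi(s+t)\ge\chi(s)+\chi(t)$ for $s,t\le 0$ (superadditivity on $\R_{\le 0}$), hence $-\chi(v_1)\le -\chi(v_1-v_2)-\chi(v_2)\le -\chi(v_1-v_2) - \chi(v_2)$ is not quite what is wanted; instead the cleaner route is that $|t\chi'(t)|\le M|\chi(t)|$ (which holds on $\mathcal W^+_M$, and with $M=1$ for convex $\chi\in\widetilde{\mathcal W}^-$ since then $|t\chi'(t)|\le|\chi(t)-\chi(0)|=|\chi(t)|$ by convexity and $\chi(0)=0$), so that one uniform constant $M$ governs both families.

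For the first inequality, I would first reduce to $v_1,v_2$ bounded by replacing $u_j$ with $\max\{u_j,\phi-k\}$, which increases to $u_j$, does not decrease $E_{\chi,\theta,\phi}$ beyond a controlled amount, and for which all mixed Monge-Amp\`ere masses converge; the general case then follows by monotone convergence once the bounded estimate is in hand. For bounded potentials, write $\theta_{u_2}^n-\theta_{u_1}^n=\ddc(v_2-v_1)\wedge T$ with $T:=\sum_{k=0}^{n-1}\theta_{u_1}^k\wedge\theta_{u_2}^{n-1-k}$ a closed positive $(n-1,n-1)$-current, and apply Theorem \ref{th-integrabypart} with $w=v_1$ (a slight smoothing of $\chi$ via Lemma \ref{lem approchi} handles the $\mathcal C^3$ hypothesis, then pass to the limit): $\int_X -\chi(v_1)(\theta_{u_2}^n-\theta_{u_1}^n)$ turns into $-\int_X \chi'(v_1)\, d v_1\wedge\dc(v_2-v_1)\wedge T$, which by Cauchy–Schwarz (Lemma \ref{le-CSine}) with the positive weight $-\chi'(v_1)\ge0$ is bounded by $(\int_X -\chi'(v_1)\,dv_1\wedge\dc v_1\wedge T)^{1/2}(\int_X -\chi'(v_1)\,d(v_2-v_1)\wedge\dc(v_2-v_1)\wedge T)^{1/2}$. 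Each of these Dirichlet-type integrals is, after one more integration by parts, dominated by $\int_X -\chi(v_j)\theta_{u_i}^n$ type quantities for $i,j\in\{1,2\}$ — this is exactly the content of the bound $|t\chi'(t)|\le M|\chi(t)|$, which lets one replace $-\chi'(v)\,dv\wedge\dc v$ by (a multiple of) $-\chi(v)\ddc v$ up to terms already of the desired form. Iterating this over the $n$ factors in $T$ produces a bound $\le C_1\sum_j E_{\chi,\theta,\phi}(u_j)$ with $C_1=C_1(n,M)$; one has to be careful that at each step a mixed mass $\theta_{u_i}^n$ is produced whose $\chi$-energy is again one of the $E_{\chi,\theta,\phi}(u_j)$ plus lower-order cross terms, so the induction on $n$ closes.

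The second inequality follows from the first by expanding $\theta_{au_1+(1-a)u_2}^n=\sum_{k}\binom nk a^k(1-a)^{n-k}\theta_{u_1}^k\wedge\theta_{u_2}^{n-k}$ and using convexity of $-\chi$ together with $au_1+(1-a)u_2-\phi=av_1+(1-a)v_2$ to get $-\chi(av_1+(1-a)v_2)\le -a\chi(v_1)-(1-a)\chi(v_2)$; integrating against each mixed mass and bounding $\int -\chi(v_i)\,\theta_{u_1}^k\wedge\theta_{u_2}^{n-k}$ by $C_1\sum_j E_{\chi,\theta,\phi}(u_j)$ (the same integration-by-parts machinery, or directly the first estimate applied to the bounded truncations) gives the claim, absorbing all binomial coefficients into $C_1(n,M)$. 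For the last inequality, when $u_1\ge u_2$ I would again truncate to the bounded case and use the telescoping $\theta_{u_1}^n-\theta_{u_2}^n=\ddc(v_1-v_2)\wedge T$ with $v_1-v_2\ge0$: integrating $-\chi(v_1)$ against this and integrating by parts as above expresses $E_{\chi,\theta,\phi}(u_1)-\int -\chi(v_1)\theta_{u_2}^n$ as a sum of nonnegative-weighted Dirichlet integrals controlled, via $|t\chi'(t)|\le M|\chi(t)|$ and Cauchy–Schwarz, by $E_{\chi,\theta,\phi}(u_2)$ and $\int-\chi(v_1)\theta_{u_2}^n$; since $v_1\ge v_2$ (from $u_1\ge u_2$) and $-\chi$ is decreasing we have $-\chi(v_1)\le-\chi(v_2)$, hence $\int-\chi(v_1)\theta_{u_2}^n\le E_{\chi,\theta,\phi}(u_2)$, and rearranging yields $E_{\chi,\theta,\phi}(u_1)\le C_2 E_{\chi,\theta,\phi}(u_2)$ with $C_2=C_2(n,M)$. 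The main obstacle I anticipate is purely technical bookkeeping in the bounded case: making sure that the successive integrations by parts over the $n$ factors of $T$ only ever generate terms of the form $\int -\chi(v_j)\,(\text{mixed mass})$ (plus Dirichlet integrals that feed back into the induction), that the smoothing of $\chi$ needed for Theorem \ref{th-integrabypart} does not spoil the weight class or the constant $M$ (here Lemma \ref{lem approchi} is exactly tailored for this), and that every truncation limit is justified by the monotone convergence of mixed non-pluripolar masses for potentials in $\mathcal E(X,\theta,\phi)$.
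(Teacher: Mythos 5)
Your proof of the second inequality breaks down for $\chi\in\widetilde{\mathcal W}^-$. You write that ``convexity of $-\chi$'' gives $-\chi(av_1+(1-a)v_2)\le -a\chi(v_1)-(1-a)\chi(v_2)$, but $-\chi$ is convex only when $\chi$ is concave, i.e., when $\chi\in\mathcal W^+_M$. For $\chi\in\widetilde{\mathcal W}^-$ the weight $\chi$ itself is convex, so $-\chi$ is concave and the inequality you use is reversed: one has $-\chi(av_1+(1-a)v_2)\ge -a\chi(v_1)-(1-a)\chi(v_2)$, which goes the wrong way. This is precisely the nontrivial case of the lemma, and the paper's proof of the second inequality avoids any pointwise convexity estimate for this reason. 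Instead it splits $X\subset\{u_1<u_2\}\cup\{u_1>2u_2\}$, compares $av_1+(1-a)v_2$ with $u_1$ (resp.\ with $(1+a)u_2$) on each piece, and then uses plurifine locality to replace the mixed masses $\theta_{u_1}^k\wedge\theta_{u_2}^{n-k}$ by masses of $\max\{u_1,u_2\}$ and $\max\{u_1/2,u_2\}$ on the relevant sets — the so-called ``monotonicity argument''. This reduces everything to the first and third inequalities applied to a handful of auxiliary potentials, with no appeal to convexity of $-\chi$.

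Two further remarks. First, a salvage of your pointwise step in the convex case does exist via superadditivity: for $\chi\in\widetilde{\mathcal W}^-$ with $\chi(0)=0$ one has $\chi(s+t)\ge\chi(s)+\chi(t)$ for $s,t\le 0$, and since $av_1+(1-a)v_2\ge v_1+v_2$, this yields $-\chi(av_1+(1-a)v_2)\le-\chi(v_1)-\chi(v_2)$; but then you still need to control $\int -\chi(v_i)\,\theta_{u_1}^k\wedge\theta_{u_2}^{n-k}$ for each $k$, and the first inequality of the lemma as stated only covers the pure mass $\theta_{u_2}^n$. You'd need a mixed-mass version, which is extra work that your sketch does not supply; the paper's plurifine-locality substitution sidesteps exactly this. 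Second, for the first and third inequalities the paper simply cites Lemma 3.2 of Vu--Do and Guedj--Zeriahi's weighted-energy estimates, so there is no proof in the paper to compare against; your IBP-plus-Cauchy--Schwarz-plus-$|t\chi'(t)|\le M|\chi(t)|$ outline is the standard route and looks plausible (your observation that $M=1$ works for convex $\chi$ by $\chi(0)-\chi(t)\ge -t\chi'(t)$ is correct), but it remains a sketch and the iteration over the $n$ factors of $T$ would need to be written carefully for the induction to close.
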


\proof The first and third inequalities are  from \cite[Lemma 3.2]{Vu_Do-MA} (see also \cite[Propositions 2.3, 2.5]{GZ-weighted} for the case where $\phi=0$ and $\theta$ is a K\"ahler form). The second desired inequality was  implicitly proved in  the proof of convexity of finite energy classes in \cite[Proposition 3.3]{Viet-convexity-weightedclass} (in a much broader context). Alternatively one can use properties of envelopes in \cite{Lu-Darvas-DiNezza-mono} to get the same conclusion.  We prove here the second desired inequality using ideas from \cite{Viet-convexity-weightedclass} for readers' convenience. We only consider $\chi \in \widetilde{\mathcal{W}}^-$. The case where $\chi \in \mathcal{W}^+_M$ is done similarly.

 Considering $u_j - \epsilon$ for $\epsilon>0$ instead of $u_j$, and taking $\epsilon \to 0$ later, without loss of generality, we can assume that $u_j <\phi \le 0$ for $j=1.2$. By replacing $u_j,\theta$ by $u_j- \phi$, $\theta_\phi$ respectively, we can assume that $\phi =0$, but $\theta$ is no longer a smooth form but a closed positive $(1,1)$-current. This change causes no trouble for us.  Let $v:= au_1+(1-a)u_2$.  Observe that $X \subset \{u_1 < u_2\} \cup \{u_1 > 2 u_2\}$. 
Hence 
\begin{align*}
	E_{\chi,\theta}(v)
 &\le \int_{\{u_1 < u_2\}} -\chi(v) \theta_v^n +\int_{\{u_1 > 2u_2\}} -\chi(v) \theta_v^n\\
& \le  \sum_{k=0}^{n}\left(\int_{\{u_1 < u_2\}} -\chi(u_1) \theta_{u_1}^k \wedge \theta_{u_2}^{n-k} +\int_{\{u_1 > 2u_2\}}-\chi((1+a)u_2) \theta_{u_1}^k \wedge \theta_{u_2}^{n-k}\right)\\
& \le  \sum_{k=0}^{n}\int_{\{u_1 < u_2\}} -\chi(u_1) \theta_{u_1}^k \wedge \theta_{\max\{u_1,u_2\}}^{n-k} +\\
&\quad + \sum_{k=0}^n \int_{\{u_1 > 2u_2\}}-2^{k+1} \chi(u_2) \theta_{\max\{u_1/2, u_2\}}^k \wedge \theta_{u_2}^{n-k}\\
& \le  \sum_{k=0}^{n}\left(\int_X -\chi(u_1) \theta_{u_1}^k \wedge \theta_{\max\{u_1,u_2\}}^{n-k} +2^{k+1} \int_X- \chi(u_2) \theta_{\max\{u_1/2, u_2\}}^k \wedge \theta_{u_2}^{n-k}\right)\\
& \lesssim  E_{\chi,\theta}(u_1)+E_{\chi,\theta}(\max\{u_1, (u_1+u_2)/2\})+ E_{\chi,\theta}(u_2)+  E_{\chi, \theta}(\max\{u_1/4+u_2/2, u_2\}) \\
& \lesssim  E_{\chi,\theta}(u_1)+ E_{\chi,\theta}(u_2),
\end{align*}
where the two last estimates hold due to  the first and third inequalities of the lemma.
 This finishes the proof. 
\endproof

\begin{lemma}\label{le-uocluongchiepsilon} 
Let $\chi, \tilde{\chi} \in \widetilde{\mathcal{W}}^-\cup \mathcal{W}^+_M$ such that $\tilde{\chi} \le \chi$ and 
let $u_1, u_2,..., u_{n+1} \in \mathcal{E}(X, \theta,\phi)$. Denote $\varrho:= \vol(\theta_\phi)$. Then there exists a constant $C>0$ depending only on $n$ and $M$ such that  
$$- \int_X \chi(\epsilon (u_1-\phi))\theta_{u_2}\wedge...\wedge\theta_{u_{n+1}} \le C\, B\varrho  (1-\tilde{\chi}(-1))Q_{0}(\epsilon),$$
for every $0<\epsilon\leq 1$, where 
\begin{center}
	$B=1+\max_{1\leq j\leq n+1}E_{\tilde{\chi}, \theta, \phi}(u_j)/\varrho\quad$
	and $\quad Q_{0}(\epsilon):= \sup_{\{t \le -1\}}\dfrac{\chi(\epsilon t)}{\tilde{\chi}(t)}\cdot$
\end{center}

\end{lemma}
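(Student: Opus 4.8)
The plan is to estimate the mixed Monge--Amp\`ere integral $-\int_X \chi(\epsilon(u_1-\phi))\,\theta_{u_2}\wedge\cdots\wedge\theta_{u_{n+1}}$ by reducing it to integrals against \emph{pure} powers $\theta_{u_j}^n$, for which we can invoke the $\tilde\chi$-energy bound $E_{\tilde\chi,\theta,\phi}(u_j)\le (B-1)\varrho$. First I would normalize: replacing $u_j$ by $u_j-\epsilon$ and letting $\epsilon\to0$ at the end, we may assume $u_j<\phi\le 0$; then replacing $u_j,\theta$ by $u_j-\phi,\theta_\phi$ we may assume $\phi=0$ and $\theta$ is merely a closed positive $(1,1)$-current with $\int_X\theta^n=\varrho$ (this is the same reduction used in the proof of Lemma \ref{le-sosanhnangnluongintegrabig}). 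So the goal becomes $-\int_X\chi(\epsilon u_1)\,\theta_{u_2}\wedge\cdots\wedge\theta_{u_{n+1}}\le C B\varrho(1-\tilde\chi(-1))Q_0(\epsilon)$.

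The main step is to pass from the mixed product to pure products. The standard device here is the comparison $\theta_{u_2}\wedge\cdots\wedge\theta_{u_{n+1}}\le$ (combinatorially bounded sum of) terms of the form $\theta_{\max\{u_1,u_j\}}^{\,k}\wedge(\text{other factors})$, together with the plurifine locality that lets one insert $\max$'s without increasing the relevant integrals. Concretely I would split $X$ according to which of $u_2,\dots,u_{n+1}$ is smallest and, on the region where $u_j$ is smallest, use monotonicity to replace each $\theta_{u_i}$ ($i\ne 1$) by $\theta_{u_j}$, then the elementary inequality for non-pluripolar products under taking $\max$ (as in \cite{Vu_Do-MA, Viet-convexity-weightedclass}) to bound the mixed term by $\sum_{k=0}^n c_{n,k}\int_X -\chi(\epsilon u_1)\,\theta_{v_k}^n$ where $v_k$ is a convex combination / $\max$ of the $u_i$'s lying in $\mathcal{E}(X,\theta,\phi)$. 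At this point, on $\{u_1\le -1\}$ we have $-\chi(\epsilon u_1)=-\chi(\epsilon u_1)\le -Q_0(\epsilon)\tilde\chi(u_1)$ by the very definition of $Q_0(\epsilon)$ (using that $\epsilon\le1$, $u_1<0$, and $\chi$ nondecreasing so $\chi(\epsilon u_1)\le 0$ and comparison holds after dividing by $\tilde\chi(u_1)<0$), while on $\{-1<u_1<0\}$ we simply bound $-\chi(\epsilon u_1)\le -\chi(-1)\le 1-\tilde\chi(-1)$ times the total mass $\varrho$. Combining, $-\int_X\chi(\epsilon u_1)\theta_{v_k}^n\le Q_0(\epsilon)E_{\tilde\chi,\theta}(v_k)+(1-\tilde\chi(-1))\varrho$, and since $Q_0(\epsilon)\ge 1$ (as $\tilde\chi\le\chi$) and $1-\tilde\chi(-1)\ge1$, both are absorbed into $(1-\tilde\chi(-1))Q_0(\epsilon)\cdot(\text{energy}+\varrho)$.

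To finish, I would apply Lemma \ref{le-sosanhnangnluongintegrabig} (first and second inequalities) to control $E_{\tilde\chi,\theta}(v_k)$ by $\sum_j E_{\tilde\chi,\theta}(u_j)\le (n+1)(B-1)\varrho\le C B\varrho$, with a constant $C$ depending only on $n$ and $M$ (the $M$-dependence enters only through Lemma \ref{le-sosanhnangnluongintegrabig}); this yields $-\int_X\chi(\epsilon u_1)\theta_{v_k}^n\le CB\varrho(1-\tilde\chi(-1))Q_0(\epsilon)$, and summing the $O(1)$ many $k$-terms (with combinatorial constant depending only on $n$) gives the claim. Undoing the normalizations ($\phi\ne0$, $u_j-\epsilon\to u_j$) is routine since all quantities are continuous/monotone under these operations.

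The part I expect to be delicate is the passage from the mixed product $\theta_{u_2}\wedge\cdots\wedge\theta_{u_{n+1}}$ to pure powers while keeping the weight $\chi(\epsilon u_1)$ attached to $u_1$: one must be careful that the $\max$-substitutions are applied on the correct plurifine-open pieces and that the resulting potentials $v_k$ still lie in $\mathcal{E}(X,\theta,\phi)$ so that Lemma \ref{le-sosanhnangnluongintegrabig} applies; managing the bookkeeping of the combinatorial constants (so they depend only on $n$) is the only real technical burden, everything else being a direct application of the already-established lemmas.
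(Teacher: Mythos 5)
Your proof has a genuine error in the final absorption step: you claim ``$Q_0(\epsilon)\ge 1$ (as $\tilde\chi\le\chi$)'', but in fact the opposite holds. For $t\le -1$ and $0<\epsilon\le 1$ one has $\chi(\epsilon t)\ge\chi(t)\ge\tilde\chi(t)$; since $\tilde\chi(t)<0$, dividing flips the sign and gives $\chi(\epsilon t)/\tilde\chi(t)\le 1$, so $Q_0(\epsilon)=\sup_{t\le -1}\chi(\epsilon t)/\tilde\chi(t)\le 1$. Moreover $Q_0(\epsilon)\to 0$ as $\epsilon\to 0$ when $\chi\in\mathcal{W}^+_M$ (and also when $\chi\in\widetilde{\mathcal{W}}^-$ with $\chi/\tilde\chi\to 0$). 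Consequently the term $(1-\tilde\chi(-1))\varrho$ that you produce on the region $\{-1<u_1-\phi<0\}$ \emph{cannot} be absorbed into the target $CB\varrho(1-\tilde\chi(-1))Q_0(\epsilon)$, because that would require precisely $Q_0(\epsilon)\gtrsim 1$ uniformly in $\epsilon$, which fails.

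The issue stems from an over-coarse pointwise bound on the ``shallow'' region. On $\{u_1-\phi\ge -1\}$ you estimated $-\chi(\epsilon(u_1-\phi))\le -\chi(-1)$, which discards the $\epsilon$-dependence. The correct bound there is $-\chi(\epsilon(u_1-\phi))\le -\chi(-\epsilon)$ (since $\epsilon(u_1-\phi)\ge -\epsilon$ and $\chi$ is increasing), and then one observes that $-\chi(-\epsilon)=-\chi(\epsilon\cdot(-1))\le -Q_0(\epsilon)\tilde\chi(-1)$ directly from the definition of $Q_0(\epsilon)$ evaluated at $t=-1$. This keeps the $Q_0(\epsilon)$ factor on both regions and the final bound goes through exactly as stated. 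This is what the paper's proof does.

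One further remark: the mixed-to-pure reduction you present as the ``main step'' via plurifine max-substitutions is more machinery than is needed. It suffices to note the elementary multilinear inequality $\theta_{u_2}\wedge\cdots\wedge\theta_{u_{n+1}}\le C_n\,\theta_{v}^{n}$ with $v:=(u_2+\cdots+u_{n+1})/n$ (expand $\theta_v^n=n^{-n}(\sum_{j}\theta_{u_j})^n$ and drop the other nonnegative summands), and then control $E_{\tilde\chi,\theta,\phi}(v)$ by the second inequality of Lemma \ref{le-sosanhnangnluongintegrabig}. This is essentially what the paper's one-line invocation of Lemma \ref{le-sosanhnangnluongintegrabig} hides. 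Your max-decomposition, besides being clunkier, is not clearly correct as phrased (one cannot ``replace $\theta_{u_i}$ by $\theta_{u_j}$'' on a plurifine-open piece; one replaces $\theta_{u_i}$ by $\theta_{\max\{u_i,u_j\}}$, which equals it there). So even setting aside the $Q_0(\epsilon)\ge 1$ error, that portion would need to be reworked before it can be considered a proof.
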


\proof  Let $L$ be the left-hand side of the desired inequality. 
We have
\begin{align*}
L  & \le - \int_{\{u_1 \ge\phi-1\}} \chi(\epsilon (u_1-\phi))  \theta_{u_2}\wedge...\wedge\theta_{u_{n+1}} 
- \int_{\{u_1 < \phi-1\}} \chi(\epsilon (u_1-\phi))  \theta_{u_2}\wedge...\wedge\theta_{u_{n+1}}\\
& \le  -\chi(-\epsilon)\varrho- Q_{0}(\epsilon)\int_{\{u_1 <\phi -1\}} \tilde{\chi}(u_1-\phi) \theta_{u_2}\wedge...\wedge\theta_{u_{n+1}}\\
&\le -\varrho Q_{0}(\epsilon)\tilde{\chi}(-1) - Q_{0}(\epsilon)\int_{X} \tilde{\chi}(u_1-\phi) \theta_{u_2}\wedge...\wedge\theta_{u_{n+1}}\\
&\le  -\varrho Q_{0}(\epsilon)\tilde{\chi}(-1)+C Q_{0}(\epsilon)\max_{1\leq j\leq n+1}E_{\tilde{\chi}, \theta, \phi}(u_j),\\
\end{align*}
where $C>0$ depends only on $n$ and $M$. The last estimate holds due to Lemma \ref{le-sosanhnangnluongintegrabig}.
Thus the desired inequality follows. 
\endproof

By the convexity/concavity and by the assumption $\tilde{\chi}\leq\chi$, we have
\begin{equation}
	\begin{cases}
		Q_{0}(\epsilon)\geq\epsilon Q_{0}(1)\quad\mbox{if}\quad\chi\in\widetilde{\mathcal{W}}^{-},\\
		Q_{0}(\epsilon)\leq\epsilon Q_{0}(1)\quad\mbox{if}\quad\chi\in\mathcal{W}_M^{+},
	\end{cases}
\end{equation}
for every $0<\epsilon \le 1$.
  Moreover, if $\chi\in\widetilde{\mathcal{W}}^{-}$ and $\chi(t)/\tilde{\chi}(t)\rightarrow 0$ as $t\rightarrow-\infty$, then by the definition of $Q_{0}$, we also have
\begin{equation}\label{eq chantrenQ2}
	Q_{0}(\epsilon)\leq \dfrac{\chi (-\epsilon^{1/2})}{\tilde{\chi}(-1)}
	+\sup_{\{t \le -\epsilon^{-1/2}\}}\dfrac{\chi(t)}{\tilde{\chi}(t)}\stackrel{\epsilon\to 0^+}{\longrightarrow} 0.
\end{equation}

Let $u_1,u_2 \in \mathcal{E}_\chi(X, \theta, \phi)$, and   
  $v:= \max\{u_1, u_2\}$. Put
$$\nu(u_1,u_2):= \chi(-|u_1- u_2|) (\theta_{u_2}^n- \theta_{u_1}^n),$$
and 
\begin{align} \label{eq-dinhgnhiaIchi}
I_\chi(u_1,u_2) &:= \int_{\{u_1< u_2\}} \nu(u_1,u_2)+\int_{\{u_1> u_2\}} \nu(u_2,u_1) = \int_X \nu(u_1,v) + \int_X \nu(u_2,v).
\end{align}

\begin{proposition}\label{prop Ichi positive}
	Let $\chi \in \widetilde{\mathcal{W}}^- \cup \mathcal{W}^+_M$.
Let $\phi$ is a negative $\theta$-psh function and $u_1, u_2\in \mathcal{E}_\chi(X, \theta,\phi)$.
	Then
	$$I_\chi(u_1,u_2)\geq 0.$$
\end{proposition}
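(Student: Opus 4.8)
The plan is to prove the nonnegativity of each of the two integrals appearing in the definition
$$I_\chi(u_1,u_2)=\int_X \nu(u_1,v)+\int_X \nu(u_2,v),\qquad v:=\max\{u_1,u_2\},$$
where $\nu(u_i,v)=\chi(-|u_i-v|)(\theta_v^n-\theta_{u_i}^n)=\chi(u_i-v)(\theta_v^n-\theta_{u_i}^n)$ since $u_i\le v$. By symmetry it suffices to show $\int_X \chi(u_1-v)(\theta_v^n-\theta_{u_1}^n)\ge 0$, i.e. that for $u\le v$ in $\mathcal{E}_\chi(X,\theta,\phi)$ one has $\int_X -\chi(u-v)\theta_u^n\le \int_X -\chi(u-v)\theta_v^n$, noting $-\chi\ge 0$ on $\R_{\le 0}$.

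First I would reduce to a situation where the integration by parts formula of Theorem \ref{th-integrabypart} applies. As in the proof of Lemma \ref{le-sosanhnangnluongintegrabig}, replacing $u,v,\theta$ by $u-\phi,v-\phi,\theta_\phi$ we may assume $\phi=0$ (with $\theta$ now merely a closed positive current, which is harmless), and by considering $\max\{u,-k\}-\epsilon$ and $\max\{v,-k\}$ and letting $k\to\infty$, $\epsilon\to 0$ at the end — using the plurifine locality and monotone convergence of non-pluripolar products under decreasing limits of bounded potentials — we may assume $u,v$ are bounded with $u\le v\le 0$. We must also regularize $\chi$: by Lemma \ref{le-regularizedchi} (or a direct mollification as in Lemma \ref{lem approchi}) pick $\chi_j\in\Cc^\infty(\R)$ with $\chi_j\to\chi$ uniformly on compacts and $\chi_j$ still convex nondecreasing (resp. concave), so $\chi_j''\ge 0$ (resp. $\chi_j''\le 0$); after proving the inequality for each $\chi_j$ one passes to the limit using dominated convergence (the bounds from Lemma \ref{le-sosanhnangnluongintegrabig} and Lemma \ref{le-uocluongchiepsilon} control the relevant integrals uniformly in $j$).

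The core computation is then the following telescoping identity. Write $w:=u-v\le 0$, a bounded dsh function, and set $T_k:=\theta_u^k\wedge\theta_v^{n-1-k}$ for $0\le k\le n-1$, each a closed positive $(n-1,n-1)$-current. Then
$$\int_X -\chi_j(w)(\theta_v^n-\theta_u^n)=\sum_{k=0}^{n-1}\int_X -\chi_j(w)\,(\ddc v-\ddc u)\wedge T_k=-\sum_{k=0}^{n-1}\int_X -\chi_j(w)\,\ddc w\wedge T_k,$$
since $\theta_v-\theta_u=\ddc(v-u)=\ddc w$. Applying the integration by parts formula (\ref{eq-intebypartschi}) with $T=T_k$ to each term gives
$$-\int_X -\chi_j(w)\,\ddc w\wedge T_k=-\int_X \chi_j'(w)\,dw\wedge\dc w\wedge T_k,$$
wait — more precisely (\ref{eq-intebypartschi}) reads $\int_X \chi(w)\ddc v\dot\wedge T=-\int_X \chi'(w)dw\wedge\dc v\dot\wedge T$; applying it with the roles set so that the left side is $\int_X(-\chi_j)(w)\ddc w\dot\wedge T_k$ yields $\int_X \chi_j'(w)\,dw\wedge\dc w\dot\wedge T_k$ up to sign bookkeeping, and since $dw\wedge\dc w\wedge T_k\ge 0$ is a positive measure and $\chi_j'\ge 0$ everywhere (as $\chi_j$ is nondecreasing, true in both the $\widetilde{\mathcal{W}}^-$ and $\mathcal{W}^+_M$ cases), each summand is nonnegative. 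Summing over $k$ gives $I_\chi(u_1,u_2)\ge 0$.

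The main obstacle I anticipate is the justification of integration by parts and of all the limiting procedures for potentials that are only in $\mathcal{E}_\chi$ rather than bounded: one must make sure the currents $\chi_j(w)\ddc w\wedge T_k$, $\chi_j'(w)\,dw\wedge\dc w\wedge T_k$ and the Monge–Ampère measures $\theta_u^n$, $\theta_v^n$ behave well under the truncation $\max\{\cdot,-k\}$ and under $\chi_j\to\chi$. This is exactly the setting prepared by Section \ref{subsec-intebyparts}: the well-definedness of $\langle dw\wedge\dc w\,\dot\wedge\,T\rangle$ for bounded $w$ (the remark after Lemma \ref{le-CSine}), Proposition \ref{pro-haidinhnghiaddctuongduong}, and the plurifine-locality arguments; the finiteness of $\int_X -\chi(w)\theta_{u_i}^n$ is guaranteed since $|w|\le |u_i-\phi|$ and $u_i\in\mathcal{E}_\chi$, giving $-\chi(w)\le -\chi(u_i-\phi)$ by monotonicity of $-\chi$, together with Lemma \ref{le-sosanhnangnluongintegrabig} for the mixed terms. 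Once these approximation steps are in place the positivity is immediate from the sign of $\chi_j'$ and the positivity of $dw\wedge\dc w\wedge T_k$.
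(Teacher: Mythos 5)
Your approach is genuinely different from the paper's and, once a sign is corrected, it is sound. The paper proves nonnegativity by a layer-cake formula combined with the comparison principle: writing $-\chi(t)=\int_t^0\chi'(s)\,ds$, one finds $\int_{\{u_1<u_2\}}\nu(u_1,u_2)=-\int_{-\infty}^0\chi'(t)\,\mu\{u_1<u_2+t\}\,dt$ with $\mu=\theta_{u_2}^n-\theta_{u_1}^n$, and then invokes \cite[Lemma 2.3]{Lu-Darvas-DiNezza-logconcave} (comparison principle for full-mass potentials) to conclude $\mu\{u_1<u_2+t\}\le 0$. No regularization of $\chi$ and no reduction to bounded potentials is needed. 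You instead write $\theta_v^n-\theta_u^n=-\ddc w\wedge T$ with $w=u-v$ and $T=\sum_k\theta_u^k\wedge\theta_v^{n-1-k}$ and apply Theorem \ref{th-integrabypart}. That works, but forces you to first truncate to bounded potentials and smooth the weight, so the argument is longer and relies on more machinery from Section \ref{subsec-intebyparts}.

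There is however a real sign slip that you half-notice (``up to sign bookkeeping'') but do not resolve. You restate the goal as $\int_X -\chi(u-v)\,\theta_u^n\le\int_X -\chi(u-v)\,\theta_v^n$; the correct inequality is the opposite, $\ge$. Indeed, expanding $\int_X\chi(w)(\theta_v^n-\theta_u^n)\ge 0$ and multiplying by $-1$ gives $\int_X(-\chi(w))\theta_u^n\ge\int_X(-\chi(w))\theta_v^n$. Your subsequent computation then evaluates the \emph{wrong} quantity $\int_X -\chi_j(w)(\theta_v^n-\theta_u^n)$ and obtains $-\sum_k\int_X\chi_j'(w)\,dw\wedge\dc w\wedge T_k$, which is $\le 0$, not $\ge 0$ as your restated goal would require; you then declare ``each summand is nonnegative,'' conflating $\int_X\chi_j'(w)\,dw\wedge\dc w\wedge T_k$ (nonnegative) with the summand in your expression (nonpositive). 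The fix is trivial: the quantity you actually need is $\int_X\chi_j(w)(\theta_v^n-\theta_u^n)=-\int_X\chi_j(w)\,\ddc w\wedge T=\sum_k\int_X\chi_j'(w)\,dw\wedge\dc w\wedge T_k\ge 0$, directly from (\ref{eq-intebypartschi}) applied with $v:=w$. With that single correction the argument is complete, modulo the (plausible but unproved in your sketch) stability of $\int_X\chi_j(u_k-v_k)(\theta_{v_k}^n-\theta_{u_k}^n)$ under the truncation $k\to\infty$; this is the same type of limit established in Lemma \ref{le-lientuccungkididwedgedc}, so it would need to be spelled out analogously.
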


\begin{proof}
	Denote $\mu=\theta_{u_2}^n- \theta_{u_1}^n$.
Since $\chi$ is absolutely continuous, we have $\chi$ is differentiable almost everywhere and 
$-\chi(t)=\int_t^0\chi'(s)ds$ for every $t<0$.
Hence
	\begin{align*}
		\int_{\{u_1< u_2\}} \nu(u_1,u_2)&=-\int_{\{u_1< u_2\}}\left(\int_{u_1-u_2}^0\chi'(t)dt\right)d\mu\\
		&=-\int_{\{u_1< u_2\}}\left(\int_{-\infty}^0\chi'(t)\mathbf{1}_{\{u_1<u_2+t\}}dt\right)d\mu\\
		&=-\int_{-\infty}^0\chi'(t)\mu\{u_1<u_2+t\}dt.
\end{align*}
Moreover, it follows from \cite[Lemma 2.3]{Lu-Darvas-DiNezza-logconcave} that $\mu\{u_1<u_2+t\}\leq 0$
for every $t\leq 0$. Hence
$$\int_{\{u_1< u_2\}} \nu(u_1,u_2)=-\int_{-\infty}^0\chi'(t)\mu\{u_1<u_2+t\}dt\geq 0.$$
Similarly, we have
$$\int_{\{u_2< u_1\}} \nu(u_2, u_1)\geq 0.$$
Thus $$I_\chi(u_1,u_2)= \int_{\{u_1< u_2\}} \nu(u_1,u_2)+\int_{\{u_2< u_1\}} \nu(u_2, u_1) \geq 0.$$
\end{proof}

\section{Stability for weighted potentials} \label{sec-fixedtype}

\subsection{Main results} 

Let $\chi, \tilde{\chi} \in \widetilde{\mathcal{W}}^-\cup\mathcal{W}_M^+$ ($M\geq 1$) such that $\tilde{\chi} \le \chi$.   For each constant $t\geq 0$,
 we denote
\begin{equation}\label{eq QB}
		Q(t)= Q_{\chi,\tilde{\chi}}(t):=
	\begin{cases}
		1 \quad\mbox{if}\quad t\ge 1,\\
			\left(Q_{0}(t)/Q_{0}(1)\right)^{1/2}\quad\mbox{if}\quad 0<t<1\quad\mbox{and}\quad\chi\in\widetilde{\mathcal{W}}^{-},\\
		t^{1/2}\quad\mbox{if}\quad 0<t<1\quad\mbox{and}\quad\chi\in\mathcal{W}_M^{+},\\
		\lim_{s\to 0^+}Q(s)\quad\mbox{if}\quad t=0.
	\end{cases}
\end{equation}
where $Q_{0}$ is defined as in Lemma \ref{le-uocluongchiepsilon}. We remove the subscript $\chi,\tilde{\chi}$ from $Q_{\chi,\tilde{\chi}}$ if $\chi, \tilde{\chi}$ are clear from the context.  Note that $Q$ is increasing continuous function in $t$ and 
\begin{equation}
	Q(0)=0\quad\mbox{if either }\quad \chi \in \mathcal{W}_M^+\quad\mbox{or}\quad
	\lim\limits_{t\to -\infty}\dfrac{\chi(t)}{\tilde{\chi}(t)}=0.
\end{equation}

For the convenience, we normalize energies with respect to $\varrho:= \int_X \theta_\phi^n$ as follows
  $$E^0_{\tilde{\chi}, \theta, \phi}:=\varrho^{-1}E_{\tilde{\chi}, \theta, \phi}, \quad I^0_\chi(u_1,u_2)=\varrho^{-1}I_\chi(u_1,u_2).$$

\begin{theorem} \label{th-lowerenergy}  Let $\theta$ be a closed smooth real $(1,1)$-form  and $\phi$ be a  negative $\theta$-psh function such that $\varrho:=\vol(\theta_\phi)>0$.  Let $\chi, \tilde{\chi} \in \widetilde{\mathcal{W}}^-\cup\mathcal{W}^+_M$ ($M\geq 1$) such that $\tilde{\chi} \le \chi$. Let $B \ge 1$ be a constant and let $u_j, \psi_j \in \mathcal{E}(X, \theta, \phi)$ satisfy $u_1 \le u_2$ and 
$$E^0_{\tilde{\chi}, \theta, \phi}(u_j)+E^0_{\tilde{\chi}, \theta, \phi}(\psi_j) \le B,$$
for $j=1,2$. Then there exists a constant $C_n>0$ depending only on $n$ and $M$ such that 
\begin{align}\label{ine-thlowerenergy}
\int_X  -\chi (u_1-u_2)  (\theta_{\psi_1}^n- \theta_{\psi_2}^n) \le 
C_n\varrho B^2(1-\tilde{\chi}(-1))^2 Q^{\circ n}\big(I^0_\chi(u_1,u_2)\big),
\end{align}
where   $Q$ is defined by \eqref{eq QB}, and $Q^{\circ n}:= Q \circ Q \circ \cdots \circ Q$ ($n$-iterate of $Q$).
\end{theorem}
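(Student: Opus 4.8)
The plan is to reduce the statement to the key Proposition \ref{pro-mainstabilitylowenergyconvex} (whose simplified form is Theorem \ref{the-mainstabilitylowenergyconvexintro}) via an induction on the dimension/degree, combined with a Cauchy--Schwarz argument built on Lemma \ref{le-CSine} and the integration by parts formula of Theorem \ref{th-integrabypart}. First I would set up the reductions: using the trick of replacing $u_j,\psi_j$ by $u_j-\epsilon,\psi_j-\epsilon$ and letting $\epsilon\to 0$, and of subtracting $\phi$ and replacing $\theta$ by $\theta_\phi$, we may assume $\phi=0$ (at the cost of $\theta$ becoming a closed positive current, which is harmless) and that all potentials lie strictly below $0$; by the monotone approximation $u_j\mapsto\max\{u_j,-k\}$, $\psi_j\mapsto\max\{\psi_j,-k\}$ and the plurifine-locality/monotonicity machinery recalled in Section \ref{sec-preli}, we may further assume all potentials have bounded difference with $\phi$, so that all the currents $dv\wedge\dc w\wedge T$ appearing below are classically well-defined. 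We also use Lemma \ref{le-regularizedchi} to replace $\chi$ by a smooth $\chi_j\in\Cc^\infty(\R)$ with $\chi_j\geq\chi-2^{-j}$ and $\tilde\chi_j\leq\tilde\chi$, so that Theorem \ref{th-integrabypart} applies; the final estimate is recovered by passing to the limit, noting that $Q_{\chi_j,\tilde\chi_j}\to Q_{\chi,\tilde\chi}$ and that the energy bounds are stable under these replacements up to absolute constants (Lemma \ref{le-sosanhnangnluongintegrabig}).

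The core computation is to expand the difference of Monge--Amp\`ere measures telescopically. Writing $w:=u_1-u_2\leq 0$ and using $\theta_{\psi_1}^n-\theta_{\psi_2}^n=\ddc(\psi_1-\psi_2)\wedge\sum_{k=0}^{n-1}\theta_{\psi_1}^k\wedge\theta_{\psi_2}^{n-1-k}$, an integration by parts (Theorem \ref{th-integrabypart} applied with the weight $\chi$) moves one derivative onto $\chi(w)$, producing terms of the shape $\int_X\chi'(w)\,d w\wedge\dc(\psi_1-\psi_2)\wedge\theta_{\psi_1}^k\wedge\theta_{\psi_2}^{n-1-k}$. Applying the Cauchy--Schwarz inequality of Lemma \ref{le-CSine} with the positive weight $-\chi'(w)\geq 0$ splits this into the product of $\big(\int_X -\chi'(w)\,dw\wedge\dc w\wedge\theta_{\psi_1}^k\wedge\theta_{\psi_2}^{n-1-k}\big)^{1/2}$ and a factor of the form $\big(\int_X -\chi'(w)\,d(\psi_1-\psi_2)\wedge\dc(\psi_1-\psi_2)\wedge\theta_{\psi_1}^k\wedge\theta_{\psi_2}^{n-1-k}\big)^{1/2}$. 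The second factor is controlled, after another integration by parts and using the energy bound $E^0_{\tilde\chi,\theta,\phi}(\psi_j)\leq B$ together with Lemma \ref{le-uocluongchiepsilon} (which bounds mixed integrals $-\int\chi(\epsilon(u_1-\phi))\theta_{u_2}\wedge\cdots$ by $C B\varrho(1-\tilde\chi(-1))Q_0(\epsilon)$), by roughly $\varrho B(1-\tilde\chi(-1))$; the delicate point is that here one must absorb the weight $\chi'(w)$, which is where the hypothesis $\tilde\chi\leq\chi$ and the definition of $Q$ via $Q_0$ enter. The first factor is precisely what Proposition \ref{pro-mainstabilitylowenergyconvex}/Theorem \ref{the-mainstabilitylowenergyconvexintro} estimates: it is bounded by $C\varrho B^2 Q^{\circ(n-1)}(I^0_\chi(u_1,u_2))$ up to the $(1-\tilde\chi(-1))$ factors. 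Multiplying the two square roots and using $\sqrt{Q^{\circ(n-1)}(t)}\cdot(\text{const})\leq Q^{\circ n}(t)$-type inequalities (which hold because $Q$ was defined precisely so that $Q(t)^2$ dominates the relevant quantity and $Q$ is increasing) yields the claimed bound with $Q^{\circ n}$.

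The main obstacle is the bookkeeping of the weight $\chi'(w)$ through the repeated integration-by-parts/Cauchy--Schwarz steps: unlike the classical case $\chi=\id$ where $\chi'\equiv 1$ and these cross terms are trivial to bound, here each application of Lemma \ref{le-CSine} leaves behind a weighted Dirichlet-type integral of $\psi_1-\psi_2$ against $-\chi'(w)$, and one has to show this is controlled by the $\tilde\chi$-energies uniformly. This is exactly the role of the ``monotonicity argument'' from \cite{Vu_Do-MA,Viet-convexity-weightedclass}: one replaces $\psi_j$ by $\max$'s with $u$'s and uses monotonicity of the $\chi$-energy functional to reduce to quantities involving only the $\chi$-energies of the $u_j$ and $\psi_j$, which are bounded by $B$. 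The second obstacle, purely technical, is making the dimension induction clean: the quantity estimated in Theorem \ref{the-mainstabilitylowenergyconvexintro} is itself of the same type as the left-hand side but with one fewer power of a curvature current and with $f^{\circ(n-1)}$ in place of $f^{\circ n}$, so one sets up the induction so that each integration by parts trades one power of $\theta_\psi$ for one composition of $Q$, and the base case $n=1$ (or the degree-zero case) is elementary. Finally I would reconcile $Q^{\circ n}$ with the abstract increasing function $f^{\circ n}$ of Theorem \ref{th-lowerenergy-phanintro}: the explicit $f$ there is essentially $Q$ composed with an affine rescaling absorbing the constant $C_n B^2(1-\tilde\chi(-1))^2$.
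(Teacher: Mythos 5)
Your high-level skeleton (reduce to bounded potentials of the same singularity type, smooth $\chi$ via Lemma \ref{le-regularizedchi}, integrate by parts via Theorem \ref{th-integrabypart}, then push the resulting gradient term into Proposition \ref{pro-mainstabilitylowenergyconvex}) matches the paper's strategy, and the minor variation in the telescoping (you expand $\theta_{\psi_1}^n-\theta_{\psi_2}^n$ directly while the paper routes through $\theta_{u_1}^n$, writing $L=L_1-L_2$ with $L_j=\int -\chi(u_1-u_2)\ddc(\psi_j-u_1)\wedge T_j$) is harmless. However, there is a genuine gap at the central step: after Lemma \ref{le-CSine} you are left with the factor
\[
\Bigl(\int_X \chi'(u_1-u_2)\, d(\psi_1-\psi_2)\wedge\dc(\psi_1-\psi_2)\wedge T\Bigr)^{1/2},
\]
and you claim this is ``controlled, after another integration by parts and the energy bound, by roughly $\varrho B(1-\tilde\chi(-1))$.'' That is precisely the naive estimate which the paper explains (in the discussion following Theorem \ref{the-mainstabilitylowenergyconvexintro}) \emph{does not work} once $\chi\neq\mathrm{id}$: the weight $\chi'(u_1-u_2)$ cannot simply be discarded and the weighted Dirichlet energy of $\psi_1-\psi_2$ is not bounded by the $\tilde\chi$-energies. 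The actual mechanism is Lemma \ref{le-secondhieuu1u2}, whose proof does not apply Cauchy--Schwarz globally; it first introduces a parameter $\epsilon$, the plurifine-open sets $U(\epsilon)=\{u_1-u_2<\epsilon(u_1+u_3-2\phi)\}$, $V(\epsilon)$, $\Gamma(\epsilon)$, and the auxiliary $\theta$-psh functions $\tilde u_1=(u_1+\epsilon u_3)/(1+\epsilon)$, $\tilde u_2=\max\{\cdot,\cdot\}$, splits the integral over these sets, and only then applies Cauchy--Schwarz to the piece on $U\cup V$. The ``bad'' factor $I_5$ is then controlled not by $\varrho B$ but by $\varrho B(1-\tilde\chi(-1))\,Q(\epsilon)^2/\epsilon$ (using the localization $(1+\epsilon)\tilde\varphi\leq\epsilon(u_1+u_3-2\phi)$ to make Lemma \ref{le-uocluongchiepsilon} applicable), while the ``good'' factor $I_6$ is $\lesssim J+B\varrho(1-\tilde\chi(-1))\epsilon Q_0(2\epsilon)$; optimizing $\epsilon\searrow J/(2\varrho)$ then produces the single composition $Q(J/\varrho)$. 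None of this $\epsilon$-calculus appears in your proposal, and without it the Cauchy--Schwarz route stalls exactly where the paper warns that it must.

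Two smaller points. First, the weight in the Cauchy--Schwarz step should be $\chi'(u_1-u_2)\geq 0$, not $-\chi'(u_1-u_2)$; since $\chi$ is increasing on $\R_{\leq 0}$, $\chi'\geq 0$. Second, the reduction is not to $\phi=0$ but to $u_j,\psi_j$ of the same singularity type as $\phi$ (Lemma \ref{le-lientuccungkididwedgedc}), and the ``induction'' used is the inverse induction on $k+l$ inside Proposition \ref{pro-mainstabilitylowenergyconvex} (trading copies of $\theta_{u_1},\theta_{u_2}$ for copies of $\theta_{u_3}$ one at a time, each trade costing one composition of $Q$), not a dimension induction with a base case $n=1$.
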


Since the measure $\theta_{\psi_1}^n- \theta_{\psi_2}^n$ is not positive, we need the following consequence of the above theorem for later applications on stability estimates.

\begin{theorem} \label{th-lowerenergy-kocochuanhoa}
	 Let $\theta$ be a closed smooth real $(1,1)$-form  and $\phi$ be a  negative $\theta$-psh function such that  $\phi=P_{\theta}[\phi]$, $\varrho:=\vol(\theta_\phi)>0$
	 and $\theta\leq A\omega$ for some constant $A \ge 1$.  Let 
	 $\chi, \tilde{\chi} \in \widetilde{\mathcal{W}}^-\cup\mathcal{W}^+_M$ ($M\geq 1$)
	 such that $\tilde{\chi} \le \chi$. Let $B \ge 1$ be a constant and $u_1, u_2, \psi \in \mathcal{E}(X, \theta, \phi)$ satisfying
	$$
	E^0_{\tilde{\chi}, \theta, \phi}(u_1)+
	E^0_{\tilde{\chi}, \theta, \phi}(u_2)+E^0_{\tilde{\chi}, \theta, \phi}(\psi) \le B,$$
	for $j=1,2$. Then, for every constant $m>0$ and $0<\gamma<1$, there exists a constant $C>0$ depending on $n, M, X, \omega, m$ and $\gamma$  such that 
\begin{align*}
\int_X  -\chi\big(-|u_1- u_2|\big) \theta_\psi^n \le -\varrho \chi\left(-|a_1-a_2|-\lambda^m\right)
+C\varrho A^{(1-\gamma)/m}(B-\tilde{\chi}(-A))^2(1-\tilde{\chi}(-1))^2\lambda^{\gamma},
\end{align*}
where $a_j:= \sup_Xu_j$ and $\lambda=Q^{\circ n}\big(I^0_\chi(u_1,u_2)\big)$.
\end{theorem}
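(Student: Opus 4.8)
The plan is to deduce this statement from Theorem \ref{th-lowerenergy} by the standard device of upgrading an estimate for the signed measure $\theta_{\psi_1}^n - \theta_{\psi_2}^n$ to an estimate for a single positive measure $\theta_\psi^n$, at the cost of comparing $\psi$ with a carefully chosen comparison potential. First I would set $a_j := \sup_X u_j$ and note, by the last part of the excerpt's discussion of model potentials (or by \cite{Lu-Darvas-DiNezza-mono}), that the normalized potentials $u_j - a_j \in \mathcal{E}(X,\theta,\phi)$ with $\sup_X(u_j - a_j - \phi) = 0$, so we may reduce bookkeeping to the situation where the sup is controlled. The key auxiliary object will be an envelope-type potential built from $\psi$ and a perturbation of $\psi$ obtained by adding a small multiple of a fixed bounded $\theta$-psh (or $A\omega$-psh) function: since $\theta \le A\omega$ and $\phi = P_\theta[\phi]$, one has enough room to construct, for each small $t>0$, a potential $\psi_{1,t}$ with $\psi \le \psi_{1,t} \le \psi + O(t)$ and $\psi_{1,t} \in \mathcal{E}(X,\theta,\phi)$, whose Monge-Ampère measure dominates $\theta_\psi^n$ on the relevant sublevel sets while keeping $E^0_{\tilde\chi,\theta,\phi}(\psi_{1,t})$ under control by Lemma \ref{le-sosanhnangnluongintegrabig}. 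One then applies \eqref{ine-thlowerenergy} to the pair $(\psi_{1,t}, \psi)$ (or to a telescoping of such pairs), and optimizes the parameter $t$ — a power $\lambda^{1/m}$ of $\lambda = Q^{\circ n}(I^0_\chi(u_1,u_2))$ — to balance the error term $-\varrho\chi(-|a_1-a_2| - \lambda^m)$ coming from the sup-difference against the term $C\varrho A^{(1-\gamma)/m}(\cdots)\lambda^\gamma$ coming from the stability estimate and the $A\omega$-bound.

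In more detail, the steps in order would be: (i) Reduce to $\psi \le \phi$ strictly and to potentials normalized by their suprema, using $\phi = P_\theta[\phi]$; (ii) For a parameter $t \in (0,1)$ to be chosen, construct a comparison potential $\chi$-energy-bounded $\tilde\psi_t$ with $\theta_{\tilde\psi_t}^n \ge c\,\mathbf{1}_{\{\psi > \phi - \text{const}\}}\theta_\psi^n$ and with $\tilde\psi_t - \psi = O(t)$ in sup-norm, exploiting $\theta \le A\omega$ to absorb $A$ into the estimate; (iii) Split $\int_X -\chi(-|u_1-u_2|)\theta_\psi^n$ over the region where $|u_1 - u_2|$ is comparable to $|a_1 - a_2|$ (controlled directly, giving the $-\varrho\chi(-|a_1-a_2|-\lambda^m)$ term via monotonicity of $\chi$ and convexity/concavity bounds on how much $-\chi$ can increase under a perturbation of size $\lambda^m$) and the complementary region (controlled by Theorem \ref{th-lowerenergy} applied to $(u_1,u_2)$ and the comparison potentials $\psi_1 = \tilde\psi_t$, $\psi_2 = \psi$); (iv) Use the elementary inequalities on $Q_0$ and $Q$ recorded right after Lemma \ref{le-uocluongchiepsilon} — in particular $Q_0(\epsilon) \le \epsilon Q_0(1)$ in the concave case and the convexity lower bound in the $\widetilde{\mathcal{W}}^-$ case — to keep every $\chi$-quantity comparable to its $\tilde\chi$-counterpart times a bounded factor; (v) Choose $t = \lambda^{1/m}$, pick up the factor $A^{(1-\gamma)/m}$ from how the $A\omega$-comparison enters, and collect constants depending only on $n, M, X, \omega, m, \gamma$.

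The main obstacle I anticipate is step (ii): producing a single positive measure bounding $\theta_\psi^n$ from below while simultaneously (a) staying inside $\mathcal{E}(X,\theta,\phi)$ with a uniform $\tilde\chi$-energy bound, (b) controlling the sup-distance to $\psi$ quantitatively in terms of $t$, and (c) making the dependence on $A$ explicit as a power $A^{(1-\gamma)/m}$ rather than something worse. This is exactly the place where $\phi = P_\theta[\phi]$ and $\theta \le A\omega$ must both be used in an essential way — the model property guarantees that the natural roof-top/envelope construction does not lose Monge-Ampère mass (so the comparison potential lands in $\mathcal{E}(X,\theta,\phi)$), and the $A\omega$-bound is what converts a fixed amount of geometric room on $X$ into the factor $A^{(1-\gamma)/m}$ after optimizing. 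A secondary, more routine difficulty is the case analysis forced by $\chi \in \widetilde{\mathcal{W}}^-$ versus $\chi \in \mathcal{W}^+_M$, since the behavior of $Q_0(\epsilon)$ under scaling goes in opposite directions; but the two inequalities displayed after Lemma \ref{le-uocluongchiepsilon}, together with \eqref{eq chantrenQ2}, are tailored precisely to handle this, so I expect it to be bookkeeping rather than a genuine new idea. Finally, balancing the two error terms is a one-variable optimization in $t$ that produces the stated exponents $m$ and $\gamma$; this is elementary once (ii) and (iii) are in place.
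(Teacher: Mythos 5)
Your proposal identifies the right template (apply Theorem~\ref{th-lowerenergy} with one comparison potential, then balance two error terms by optimizing a parameter $t$), but your central step~(ii) — constructing a potential $\tilde\psi_t$ that is $O(t)$-close to $\psi$ in sup-norm and whose Monge-Amp\`ere measure dominates $\theta_\psi^n$ (even on a region) — is not what the paper does, and in fact it does not work. The obstruction is that $\psi$ is an arbitrary potential with merely bounded $\tilde\chi$-energy, so $\theta_\psi^n$ can be a very singular measure; a sup-norm-small perturbation $\tilde\psi_t$ of $\psi$ has no reason whatsoever to have $\theta_{\tilde\psi_t}^n \ge c\,\theta_\psi^n$ anywhere (Monge-Amp\`ere operators are not monotone in sup-norm), and even building such a $\tilde\psi_t$ by envelope techniques would not give a uniform $\tilde\chi$-energy bound in $t$. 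Your step~(iii) also has a structural mismatch: Theorem~\ref{th-lowerenergy} bounds $\int_X -\chi(u_1-u_2)(\theta_{\psi_1}^n - \theta_{\psi_2}^n)$ over all of $X$, so it cannot be applied to a sublevel region $X\setminus W_t$; the region-splitting of the \emph{integral} is incompatible with it.

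The paper instead splits the \emph{measure}, not the region. Let $W_t := \{u_1 > a_1 - t\}$ and $b_t := \vol_\omega(W_t)$. One solves, via \cite{Lu-Darvas-DiNezza-logconcave}, the auxiliary equation
\begin{align*}
\theta_\varphi^n \;=\; \frac{\varrho}{b_t}\,\mathbf{1}_{W_t}\,\omega^n, \qquad \varphi\in\mathcal{E}(X,\theta,\phi),\quad \sup_X(\varphi-\phi)=0.
\end{align*}
This $\varphi$ is completely unrelated to $\psi$; its purpose is that $\theta_\varphi^n$ lives entirely on $W_t$, where $u_1-u_2 \ge a_1 - a_2 - t$, so $\int_X -\chi(u_1-u_2)\theta_\varphi^n \le -\varrho\,\chi(a_1-a_2-t)$ directly. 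Writing
\begin{align*}
\int_X -\chi(u_1-u_2)\,\theta_\psi^n \;=\; \int_X -\chi(u_1-u_2)\big(\theta_\psi^n-\theta_\varphi^n\big) \;+\; \int_X -\chi(u_1-u_2)\,\theta_\varphi^n
\end{align*}
turns the first term into exactly the quantity bounded by Theorem~\ref{th-lowerenergy}, provided $E^0_{\tilde\chi,\theta,\phi}(\varphi)$ is controlled. That control is precisely where $\theta\le A\omega$ and $\phi = P_\theta[\phi]$ enter, and through ingredients you do not mention: the $L^\infty$ estimate Theorem~\ref{the P[u]-C<u} gives $\phi - C A(-\log t + \log A + 1) \le \varphi \le \phi$, and the lower volume bound $b_t \gtrsim (t/A)^{2n}$ of Lemma~\ref{lem vol estimate} keeps the $L^p$-norm of the density under control so that Theorem~\ref{the P[u]-C<u} applies with an explicit constant. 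Combining the two parts and then setting $t=\lambda^m$ produces the logarithmic factor $(\log(Ae/\lambda^m))^{2M}$, which is absorbed into $C\,A^{(1-\gamma)/m}\lambda^{-(1-\gamma)}$ to yield the stated $A^{(1-\gamma)/m}\lambda^\gamma$. So the two essential ideas you are missing are: (a) replace $\theta_\psi^n$ by a difference $\theta_\psi^n - \theta_\varphi^n$ where $\theta_\varphi^n$ is a normalized indicator measure concentrated on the good sublevel set (rather than trying to dominate $\theta_\psi^n$ from below), and (b) control $\varphi$'s energy via the $L^\infty$ estimate for MA equations with $L^p$ density and the Alexandrov-type volume bound, not by sup-norm proximity to $\psi$.
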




\subsection{Proof of Theorem \ref{th-lowerenergy}}  \label{subsec-key}

Here is the first step in the proof of Theorem \ref{th-lowerenergy}.

\begin{lemma} \label{le-lientuccungkididwedgedc} If  Theorem \ref{th-lowerenergy} holds for $u_j, \psi_j$ of the same singularity type as $\phi$, then it holds for the general case.
\end{lemma}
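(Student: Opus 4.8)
The plan is to reduce the general case to the case of potentials having the same singularity type as $\phi$ by a standard approximation-and-monotonicity argument. First I would replace each $u_j$ and $\psi_j$ by its ``truncation towards $\phi$'', namely, working locally and using the monotonicity of the $\chi$-energy, one replaces $u_j$ by a decreasing or increasing sequence whose members do have the same singularity type as $\phi$. The natural candidates are $u_j^{(k)}:=\max\{u_j,\phi-k\}+$ (a constant to restore the sup), and similarly for $\psi_j$; each $u_j^{(k)}$ satisfies $u_j^{(k)}-\phi$ bounded, hence has the same singularity type as $\phi$, and $u_j^{(k)}\searrow u_j$ as $k\to\infty$. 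One must be slightly careful because the hypothesis $u_1\le u_2$ and the normalization $\sup_X(u_j-\phi)=0$ interact with truncation; but $\max\{u_1,\phi-k\}\le\max\{u_2,\phi-k\}$ is automatic, so the order is preserved.

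The key steps, in order, are as follows. First, apply Theorem \ref{th-lowerenergy} (assumed for the special singularity type) to $u_1^{(k)}\le u_2^{(k)}$ and $\psi_1^{(k)},\psi_2^{(k)}$, obtaining
\begin{align*}
\int_X -\chi(u_1^{(k)}-u_2^{(k)})(\theta_{\psi_1^{(k)}}^n-\theta_{\psi_2^{(k)}}^n)\le C_n\varrho B_k^2(1-\tilde\chi(-1))^2 Q^{\circ n}\big(I^0_\chi(u_1^{(k)},u_2^{(k)})\big),
\end{align*}
where $B_k$ is the corresponding energy bound for the truncated potentials. Second, control $B_k$: by Lemma \ref{le-sosanhnangnluongintegrabig} (third inequality), since $u_j^{(k)}\ge u_j$ one has $E_{\tilde\chi,\theta,\phi}(u_j^{(k)})\le C_2 E_{\tilde\chi,\theta,\phi}(u_j)$, so $B_k\le C_2 B$ uniformly in $k$; thus the right-hand side constant is controlled (up to an absorbable factor) independently of $k$. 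Third, pass to the limit $k\to\infty$. On the left-hand side, since $u_j^{(k)}\searrow u_j$, $\chi(u_1^{(k)}-u_2^{(k)})\to\chi(u_1-u_2)$ and the measures $\theta_{\psi_j^{(k)}}^n$ converge to $\theta_{\psi_j}^n$ (weakly; using that $\psi_j\in\mathcal{E}(X,\theta,\phi)$ so no mass escapes, via the fundamental convergence results for full-mass potentials); one needs a uniform integrability input to pass the limit under the signed measure $\theta_{\psi_1^{(k)}}^n-\theta_{\psi_2^{(k)}}^n$, which is where the energy bounds $E^0_{\tilde\chi}\le B$ and Lemma \ref{le-uocluongchiepsilon} enter to kill tails. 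On the right-hand side, $I^0_\chi(u_1^{(k)},u_2^{(k)})\to I^0_\chi(u_1,u_2)$ by a similar convergence argument together with the monotone behaviour of the sets $\{u_1^{(k)}<u_2^{(k)}\}$, and continuity of $Q^{\circ n}$ then gives convergence of the bound.

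The main obstacle I expect is the limiting procedure on the left-hand side: the measures $\theta_{\psi_j^{(k)}}^n$ are only weakly convergent and $\chi(u_1-u_2)$ is only quasi-continuous (indeed unbounded when $\chi\in\mathcal{W}^-$ is unbounded), so one cannot simply pair weak limits with the integrand. The standard remedy, which I would follow, is to first prove the inequality with $\chi$ replaced by a bounded truncation $\chi_N:=\max\{\chi,-N\}$ (or $\chi$ composed with a truncation of its argument), where the integrand is bounded and continuity suffices, obtain the estimate with $\chi_N$ in place of $\chi$ on both sides, and then let $N\to\infty$ using monotone convergence on the left and the fact that $Q$ is built from $\chi$ in a way compatible with such truncation (so $Q_{\chi_N,\tilde\chi}\le Q_{\chi,\tilde\chi}$). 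A secondary technical point is ensuring the sup-normalizations can be matched simultaneously for $u_1^{(k)}$ and $u_2^{(k)}$ while keeping $u_1^{(k)}\le u_2^{(k)}$; this is harmless because adding a common constant to both shifts $\chi(u_1-u_2)$'s argument trivially and the estimate is insensitive to it, or one simply drops the sup-normalization in Theorem \ref{th-lowerenergy} since it is not used there. Once these convergences are in place, the lemma follows immediately.
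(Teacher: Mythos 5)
Your basic strategy — truncate towards $\phi$ to reduce to potentials of the same singularity type, control the energies by the third inequality of Lemma~\ref{le-sosanhnangnluongintegrabig}, then pass to the limit — is the right one and matches the paper's opening moves. (Adding a constant ``to restore the sup'' is harmless but unnecessary, as you yourself note at the end: Theorem~\ref{th-lowerenergy} makes no sup-normalization assumption.) The place where your argument genuinely diverges, and where the gap appears, is the limiting step.

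The paper avoids the obstacle you identify by using \emph{two independent} truncation parameters: $u_{j,k}=\max\{u_j,\phi-k\}$ and $\psi_{j,l}=\max\{\psi_j,\phi-l\}$. It first lets $l\to\infty$ with $k$ fixed. At that stage $\chi(u_{1,k}-u_{2,k})$ is a \emph{bounded} quasi-continuous function, so the convergence of $\theta_{\psi_{j,l}}^n$ to $\theta_{\psi_j}^n$ (via \cite[Theorem~2.2]{Darvas-Lu-DiNezza-singularity-metric}) passes directly. Only then does it let $k\to\infty$: now the measures $\theta_{\psi_j}^n$ are \emph{fixed}, the integrands $-\chi(u_{1,k}-u_{2,k})$ increase monotonically to $-\chi(u_1-u_2)$, and the left side converges by monotone convergence, while $I_\chi(u_{1,k},u_{2,k})\to I_\chi(u_1,u_2)$ is proved by a careful decomposition over $\{u_1>\phi-k\}$ and $\{u_1\le\phi-k\}$ together with plurifine locality and dominated convergence, with the integrable majorant $-\chi(u_1-\phi)(\theta_{u_1}^n+\theta_{u_2}^n)$ coming from Lemma~\ref{le-sosanhnangnluongintegrabig}. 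Your proposal couples the two truncations (same $k$), so both the test function and the measure change simultaneously, which weak convergence alone cannot handle. Your proposed remedy — replacing $\chi$ by the bounded truncation $\chi_N=\max\{\chi,-N\}$ — runs into two concrete problems. First, when $\chi\in\mathcal{W}^+_M$ the truncation $\chi_N$ is no longer concave, so $\chi_N\notin\widetilde{\mathcal{W}}^-\cup\mathcal{W}^+_M$ and Theorem~\ref{th-lowerenergy} cannot be applied to it. Second, even in the $\widetilde{\mathcal{W}}^-$ case, the inequality $Q_{\chi_N,\tilde\chi}\le Q_{\chi,\tilde\chi}$ that you assert is not automatic: although $Q_{0,\chi_N}(\epsilon)\le Q_{0,\chi}(\epsilon)$, the definition of $Q$ involves the \emph{ratio} $Q_0(\epsilon)/Q_0(1)$, both of which decrease under truncation, so the pointwise comparison of the ratios needs an argument you have not supplied. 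The double-truncation trick sidesteps all of this and is the step you should incorporate.
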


\proof Let $u_j, \psi_j$ ($j=1,2$) be as in the statement of Theorem \ref{th-lowerenergy}.  For every
$k>0$, we denote $u_{j,k}:= \max\{u_j,  \phi -k\}$
 and $\psi_{j, k}=\max\{\psi_j, \phi-k\}$. By Lemma \ref{le-sosanhnangnluongintegrabig}, there exists a constant $C_1>0$ depending
only on $n$ and $M$ such that
$$ E^0_{\tilde{\chi},\theta, \phi}(u_{j,k})+ E^0_{\tilde{\chi},\theta, \phi}(\psi_{j,k})\leq C_1B,$$
for $j=1, 2$ and for every $k>0$. Therefore, by the assumption, there exists a constant $C_2>0$
depending only on $n$ and $M$ such that
$$\int_X  -\chi (u_{1, k}-u_{2, k})  (\theta_{\psi_{1, l}}^n- \theta_{\psi_{2, l}}^n) \le 
C_2\varrho B^2(1-\tilde{\chi}(-1))^2 Q^{\circ (n)}(I^0_\chi(u_{1, k},u_{2, k})),$$
for every $k, l>0$. Letting $l\rightarrow\infty$ and using \cite[Theorem 2.2]{Darvas-Lu-DiNezza-singularity-metric}, we get
\begin{equation}\label{eq0 le-lientuccungkididwedgedc}
	\int_X  -\chi (u_{1, k}-u_{2, k})  (\theta_{\psi_{1}}^n- \theta_{\psi_{2}}^n) \le 
	C_2\varrho B^2(1-\tilde{\chi}(-1))^2 Q^{\circ (n)}(I^0_\chi(u_{1, k},u_{2, k}))
\end{equation}
for every $k>0$.
 We will show that
\begin{equation}\label{eq1 le-lientuccungkididwedgedc}
	I_\chi(u_1,u_2)=\lim_{k \to \infty}I_{\chi}(u_{1,k}, u_{2,k}).
\end{equation}
Denote
$$f:= \chi(u_{1}- u_{2}) (\theta_{u_{2}}^n - \theta_{u_{1}}^n), \quad f_k:=  \chi(u_{1,k}- u_{2,k}) (\theta_{u_{2,k}}^n - \theta_{u_{1,k}}^n).$$
We have 
\begin{align*}
	I_{\chi}(u_{1,k}, u_{2,k})  = \int_{X} f_k 
	&= \int_{\{u_1> \phi-k\}}f_k+ \int_{\{u_1 \le \phi-k\}}  f_k\\
	& =\int_{\{u_1> \phi-k\}} f+ \int_{\{u_1 \le \phi  -k\}}  f_k\\
	&= I_\chi(u_1,u_2)- \int_{\{u_1\le \phi -k\}}f + \int_{\{u_1 \le  \phi -k\}}  f_k.
\end{align*}
Then
\begin{align*}
	|I_{\chi}(u_{1,k}, u_{2,k})-I_\chi(u_1,u_2)|&= 
	\bigg|\int_{\{u_1\le \phi -k\}}f - \int_{\{u_1 \le  \phi -k\}}  f_k\bigg|\\
	&\leq\int_{\{u_1\le \phi -k\}}\mu+ \int_{\{u_1 \le  \phi -k\}} -\chi(u_{1,k}- u_{2,k}) (\theta_{u_{2,k}}^n + \theta_{u_{1,k}}^n)\\
	&\leq \int_{\{u_1\le \phi -k\}}\mu+ \int_{\{u_1 \le  \phi -k\}} -\chi(-k) (\theta_{u_{2,k}}^n + \theta_{u_{1,k}}^n),
\end{align*}
where $\mu=-\chi(u_1-\phi)(\theta_{u_1}^n+\theta_{u_2}^n)$.
By Lemma \ref{le-sosanhnangnluongintegrabig}, we have  $\int_X\mu<\infty$. Then it follows from Lebesgue's
dominated convergence theorem that $\lim_{k\to\infty}\int_{\{u_1\le \phi -k\}}\mu=0$. Therefore, 
\begin{equation}\label{eq2 le-lientuccungkididwedgedc}
	\limsup\limits_{k\to\infty}	|I_{\chi}(u_{1,k}, u_{2,k})-I_\chi(u_1,u_2)|
	\leq \limsup\limits_{k\to\infty}\int_{\{u_1 \le  \phi -k\}} -\chi(-k) (\theta_{u_{1,k}}^n + \theta_{u_{2,k}}^n).
\end{equation} 
By the fact that 
 $$\int_X\theta_{u_{1,k}}^n=\int_X\theta_{u_{2,k}}^n=\int_X\theta_{\phi}^n, \quad \mathbf{1}_{\{u_1 >  \phi -k\}}\theta_{u_{j,k}}^n=\mathbf{1}_{\{u_1 >  \phi -k\}}\theta_{u_{j}}^n \quad (j=1,2),$$
  we have
\begin{equation}\label{eq3 le-lientuccungkididwedgedc}
	-\chi(-k)\int_{\{u_1 \le  \phi -k\}} (\theta_{u_{1,k}}^n + \theta_{u_{2,k}}^n)
	= -\chi(-k)\int_{\{u_1 \le  \phi -k\}} (\theta_{u_{1}}^n + \theta_{u_{2}}^n)
	\leq \int_{\{u_1 \le  \phi -k\}}\mu.
\end{equation}
By using \eqref{eq2 le-lientuccungkididwedgedc}, \eqref{eq3 le-lientuccungkididwedgedc} and 
the fact $\lim_{k\to\infty}\int_{\{u_1\le \phi -k\}}\mu=0$, we get \eqref{eq1 le-lientuccungkididwedgedc}.
Now, combining \eqref{eq0 le-lientuccungkididwedgedc} and \eqref{eq1 le-lientuccungkididwedgedc}, we obtain 
$$	\int_X  -\chi (u_{1}-u_{2})  (\theta_{\psi_{1}}^n- \theta_{\psi_{2}}^n) \le 
C_2\varrho B^2(1-\tilde{\chi}(-1))^2 Q^{\circ (n)}(I^0_\chi(u_{1}, u_{2})).$$
The proof is completed.
\endproof

\begin{lemma}\label{le-secondhieuu1u2} 
	Let $M\geq 1$ and 	$\chi, \tilde{\chi}\in\widetilde{\mathcal{W}}^-\cup\mathcal{W}^{+}_M$
	such that $\tilde{\chi}\leq\chi$ and $\chi\in \Cc^1(\R)$.
Let $u_1, u_2, ..., u_{n+2} \in \mathcal{E}(X, \theta, \phi)$ such that $u_1 \le u_2$ and
	$u_j-\phi$ is bounded ($j=1, 2, ..., n+2$), where $\phi$ is a negative $\theta$-psh function satisfying 
	$\varrho:=\vol(\theta_\phi)>0$. 
 Denote 
 $$T=\theta_{u_4}\wedge...\wedge\theta_{u_{n+2}}, \quad I= \left|\int_X\chi'(u_1-u_2)d(u_1-u_2)\wedge d^c(u_1-u_3)\wedge T\right|,$$
 and
 $$J=\int_X\chi'(u_1-u_2)d(u_1-u_2)\wedge d^c(u_1-u_2)\wedge T.$$
	Then there exists $C>0$ depending only on $n$ and $M$ such that
	$$I \leq C\varrho B(1-\tilde{\chi}(-1))
	Q(J/\varrho),$$
 where $B:=\sum_{j=1}^{n+2} \max\{E^0_{\tilde{\chi},\theta, \phi}(u_j),1 \}$
	and $Q$ is defined by \eqref{eq QB}. 
\end{lemma}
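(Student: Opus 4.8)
The plan is to bound $I$ by $J^{1/2}$ times a controlled factor, using the Cauchy--Schwarz inequality of Lemma \ref{le-CSine} and the integration by parts formula of Theorem \ref{th-integrabypart}, and then to convert the $J^{1/2}$-bound into the stated $Q(J/\varrho)$-bound by feeding in an a priori estimate on $J$ in terms of the $\tilde\chi$-energies. Write $w:= u_1-u_2\le 0$ (a bounded dsh function once we work with $u_j-\phi$, so all the relative wedge products below are well-defined), and $w':= u_1-u_3$, which is also bounded dsh. The weight $\chi$ is $\Cc^1$ and $\chi'\ge 0$ (convexity or concavity on $\R_{\le 0}$ with the stated monotonicity give $\chi'\ge 0$ everywhere after extending), so $\chi'(w)$ is a nonnegative Borel function and Lemma \ref{le-CSine} applies with the $(n-1,n-1)$-current $T$:
\begin{align*}
I = \Big| \int_X \chi'(w)\, dw \wedge \dc w' \wedge T \Big| \le \Big( \int_X \chi'(w)\, dw\wedge \dc w \wedge T\Big)^{1/2}\Big(\int_X \chi'(w)\, dw'\wedge \dc w'\wedge T\Big)^{1/2} = J^{1/2}\, \Big(\int_X \chi'(w)\, dw'\wedge \dc w'\wedge T\Big)^{1/2}.
\end{align*}
So the first task is to bound the second factor $\int_X \chi'(u_1-u_2)\, d(u_1-u_3)\wedge \dc(u_1-u_3)\wedge T$ by something of the form $C\varrho B(1-\tilde\chi(-1))$.

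For that second factor, the idea is to use the integration by parts formula of Theorem \ref{th-integrabypart} to trade the $d\bullet\wedge\dc\bullet$ term for $\chi(\text{something})$-type integrals against $\ddc$-currents, where the monotonicity (comparison) machinery from \cite{Vu_Do-MA,Viet-convexity-weightedclass} lets one replace the bad potentials $u_1,u_3$ by $\phi$ (or by maxima of the $u_j$'s, which have comparable energy by Lemma \ref{le-sosanhnangnluongintegrabig}), and where the key quantitative input is Lemma \ref{le-uocluongchiepsilon}: an integral of $-\chi(\epsilon(u-\phi))$ against a mixed Monge--Amp\`ere product is bounded by $C B \varrho (1-\tilde\chi(-1)) Q_0(\epsilon)$. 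Concretely, pick a $\Cc^3$ primitive-type function $\Xi$ with $\Xi'' = \chi'$ (or approximate; note $\chi$ is only $\Cc^1$, so one should first regularize $\chi$ by convolution as in Lemma \ref{lem approchi}/Lemma \ref{le-regularizedchi}, carry out the integration by parts for the smooth approximants $\chi_\epsilon$, and pass to the limit at the end), expand $d(u_1-u_3)\wedge\dc(u_1-u_3)$ as a combination of $d u_i\wedge \dc u_j$ terms, apply \eqref{eq-intebypartschi} to each, and in the resulting terms $\int v\,\chi'(w)\,\ddc w' \wedge T$ or $\int \chi(w)\,\ddc v\,\wedge T$ use the monotonicity argument to dominate by energies $E^0_{\tilde\chi,\theta,\phi}(u_j) \le B$ and $1-\tilde\chi(-1)$. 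This produces $\int_X \chi'(w)\, dw'\wedge\dc w'\wedge T \le C\varrho B (1-\tilde\chi(-1))$, hence $I \le C \varrho^{1/2} B^{1/2}(1-\tilde\chi(-1))^{1/2}\, J^{1/2}$.

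Finally I convert this into the asserted bound $I\le C\varrho B(1-\tilde\chi(-1))\,Q(J/\varrho)$. Set $t:= J/\varrho$. If $t\ge 1$ then $Q(t)=1$ and we just need $I\le C\varrho B(1-\tilde\chi(-1))$, which follows by applying the very same integration-by-parts-plus-monotonicity argument directly to $J$ itself (it is structurally the same integral as the second factor, with $w'$ replaced by $w$) to get $J\le C\varrho B(1-\tilde\chi(-1))$, and then $I\le J^{1/2}(C\varrho B(1-\tilde\chi(-1)))^{1/2}\le C\varrho B(1-\tilde\chi(-1))$ since $t\ge 1$. If $0<t<1$, I need the sharper bound $J\le C\varrho B(1-\tilde\chi(-1))\,Q_0(J/\varrho)/Q_0(1)$ (case $\chi\in\widetilde{\mathcal W}^-$) or $J\le C\varrho B(1-\tilde\chi(-1))(J/\varrho)$ (case $\chi\in\mathcal W^+_M$); these come from re-running the energy estimate but now using Lemma \ref{le-uocluongchiepsilon} with the scaling parameter $\epsilon$ chosen comparably to $(J/\varrho)^{1/2}$, together with the convexity/concavity inequalities for $Q_0(\epsilon)$ recorded after Lemma \ref{le-uocluongchiepsilon}; combined with $I\le J^{1/2}(C\varrho B(1-\tilde\chi(-1)))^{1/2}$ this gives exactly $I\le C\varrho B(1-\tilde\chi(-1))\,Q(J/\varrho)$ by the definition \eqref{eq QB} of $Q$. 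The main obstacle I anticipate is the bookkeeping in the second paragraph: controlling \emph{each} of the many mixed terms produced by the integration by parts — in particular terms where $\chi'$ is evaluated at $u_1-u_2$ but the $d\bullet\wedge\dc\bullet$ or $\ddc$ is in $u_1-u_3$ — uniformly in the regularization, which is precisely where one must invoke the monotonicity/pluri-locality argument rather than a naive estimate, and where keeping the constant dependent only on $n$ and $M$ (not on the potentials) requires care.
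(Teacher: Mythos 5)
There is a genuine gap. Your plan applies Cauchy--Schwarz globally to get $I\le J^{1/2}\bigl(\int_X\chi'(u_1-u_2)\,d(u_1-u_3)\wedge\dc(u_1-u_3)\wedge T\bigr)^{1/2}$ and then asserts that the second factor is $\le C\varrho B(1-\tilde\chi(-1))$ with $C$ depending only on $n,M$. For $\chi\in\mathcal{W}_M^+$ this does hold (one can use $\chi'$ decreasing and $u_1-u_2\ge u_1+u_3-2\phi$ to replace the argument of $\chi'$ and then run the paper's $I_5$-estimate), but for $\chi\in\widetilde{\mathcal{W}}^-$ it is false: since $\chi'$ is \emph{increasing} and $u_1-u_2$ can be close to $0$ regardless of how deep $u_1,u_3$ are, one has $\chi'(u_1-u_2)\approx\chi'(0)$, and the second factor is then of the order of $\chi'(0)\int_X d(u_1-u_3)\wedge\dc(u_1-u_3)\wedge T$, i.e.\ an $\id$-energy of $u_1,u_3$, which is \emph{not} controlled by the $\tilde\chi$-energies in $B$ when $\tilde\chi$ is a bounded weight such as (a regularization of) $\max\{t,-1\}$. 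Concretely: take $\tilde\chi=\chi$ bounded, $u_1$ close to $u_2$ with $\sup|u_1-u_2|<1$, and $u_3$ with $u_3-\phi$ bounded but of large $L^1$-norm; then $B$ stays bounded while the second factor grows without bound. Since the low-energy case $\chi\in\widetilde{\mathcal{W}}^-$ is precisely the new and crucial case the lemma is designed for, this is not a removable technicality. Your integration-by-parts-plus-monotonicity plan in the second paragraph does not resolve it, because after IBP the argument of $\chi'$ remains $u_1-u_2$, which is decoupled from $u_1-u_3$; and the closing remark that one could sharpen the bound to $J\le C\varrho B(1-\tilde\chi(-1))Q_0(J/\varrho)/Q_0(1)$ is circular in $J$ and does not feed back into the Cauchy--Schwarz estimate in a useful way.

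What the paper's proof does, and what is essentially forced here, is to replace $u_2$ by $\tilde{u}_2=\max\bigl\{\tfrac{u_2+\epsilon u_3}{1+\epsilon},\,\tfrac{(1-\epsilon)u_1+2\epsilon\phi}{1+\epsilon}\bigr\}$ and to split $X$ into $U(\epsilon),V(\epsilon),\Gamma(\epsilon)$ according to the sign of $(u_1-u_2)-\epsilon(u_1+u_3-2\phi)$. This pins the argument of $\chi'$ to $(1+\epsilon)\tilde\varphi\le\epsilon(u_1+u_3-2\phi)$, so that in the $\widetilde{\mathcal{W}}^-$ case $\chi'$ can be bounded by $\chi'(\epsilon(u_j-\phi))$ and the IBP trick (using $s\chi'(s\cdot)d\cdot\wedge\dc\cdot=-\chi(s\cdot)\ddc\cdot$ for $s=\epsilon$) reduces the second factor to $\epsilon^{-1}Q_0(\epsilon)$-type quantities handled by Lemma~\ref{le-uocluongchiepsilon}. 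The price is the $\epsilon^{-1}$-blowup in $I_5$, which is exactly compensated by the $J+O(\epsilon Q_0(2\epsilon))$ bound on $I_6$ after choosing $\epsilon\searrow J/(2\varrho)$; this is how $Q(J/\varrho)$ enters. In short: the Cauchy--Schwarz step must be applied to the \emph{modified} integrand over $U\cup V$, not globally to the original one, and this modification---not a better a priori bound on $J$---is what allows the argument to close.
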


Clearly if $\chi, \tilde{\chi} \in \widetilde{\mathcal{W}}^-$, then the above constant $C$ does not depend on $M$. 

\proof 
In this proof, we use the symbols $\lesssim$ and $\gtrsim$ for inequalities modulo a  constant depending only on $n$ and $M$.
By Theorem \ref{th-integrabypart} and Lemma \ref{le-sosanhnangnluongintegrabig}, we have
$$I=\left|\int_X -\chi(u_1-u_2)dd^c(u_1-u_3)\wedge T\right|\lesssim \varrho B= \varrho B Q(1).$$ Therefore, without loss of generality,
we can assume that $J/\varrho<1$.
Approximating $u_3$ by $u_3-\delta$ with $\delta\searrow 0$, we can assume that $u_3<\phi$ on $X$.

For each $0<\epsilon<1/2$ we denote
\begin{center}
	$U(\epsilon)=\{u_1-u_2<\epsilon(u_1+u_3-2\phi)\}, V(\epsilon)=\{u_1-u_2>\epsilon(u_1+u_3-2\phi)\},$
\end{center}
	and $\Gamma(\epsilon)=\{u_1-u_2=\epsilon(u_1+u_3-2\phi)\}$.
Since $\Gamma(\epsilon_1)\cap\Gamma(\epsilon_2)=\emptyset$ for every $\epsilon_1\neq\epsilon_2$ (note $u_3<\phi$),
we have
\begin{equation}\label{eq epsilon lemforpr3.5}
	\int_{\Gamma(\epsilon)}d(u_1-u_3)\wedge d^c(u_1-u_3)\wedge T=0,
\end{equation}
for almost everywhere $\epsilon\in (0, 1/2)$. 

Let $0<\epsilon<1/2$ be a constant  satisfying \eqref{eq epsilon lemforpr3.5}. To simplify the notation, from now on, we write $U,V, \Gamma$ for $U(\epsilon), V(\epsilon), \Gamma(\epsilon)$ respectively.
Denote 
$$\tilde{u}_1=\dfrac{u_1+\epsilon u_3}{1+\epsilon},\qquad
\tilde{u}_2=\max\left\{\dfrac{u_2+\epsilon u_3}{1+\epsilon},\quad \dfrac{(1-\epsilon)u_1+2\epsilon\phi}{1+\epsilon}\right\} \quad\mbox{and}\quad \tilde{\varphi}=\tilde{u}_1-\tilde{u}_2.$$
Then
$\varphi:=(u_1-u_2)=(1+\epsilon)\tilde{\varphi}$ on $U$.
Hence
\begin{align*}
	I&=\left|\int_X-\chi(\varphi)dd^c(u_1-u_3)\wedge T\right|\\
	&\leq \left|\int_U-\chi(\varphi)dd^c(u_1-u_3)\wedge T\right|
	+\left|\int_{X\setminus U}-\chi(\varphi)dd^c(u_1-u_3)\wedge T\right|\\
	&\leq \left|\int_U-\chi((1+\epsilon)\tilde{\varphi})dd^c(u_1-u_3)\wedge T\right|
	+\left|\int_{X\setminus U}-\chi(\varphi)(\theta_{u_1}+\theta_{u_3})\wedge T\right|\\
	&\leq \left|\int_U-\chi((1+\epsilon)\tilde{\varphi})dd^c(u_1-u_3)\wedge T\right|
	+\left|\int_{X\setminus U}-\chi(\epsilon(u_1+u_3-2\phi))(\theta_{u_1}+\theta_{u_3})\wedge T\right|\\
	&:=I_1+I_2,
\end{align*}
where in the last inequality we used the fact that $\chi$ is increasing and $\varphi \ge \epsilon(u_1+ u_2- 2 \phi)$ on $X \backslash U$. By Lemma \ref{le-sosanhnangnluongintegrabig}, we have
$E^0_{\tilde{\chi},\theta, \phi}\left(\dfrac{u_1+u_3}{2}\right)\lesssim B$. Therefore, it follows from Lemma
\ref{le-uocluongchiepsilon} that
\begin{equation}\label{eqI2 lemforpr3.5}
	I_2\leq 2\int_{X}-\chi\left(2\epsilon\left(\dfrac{u_1+u_3}{2}-\phi\right)\right)\theta_{(u_1+u_3)/2}\wedge T \lesssim B\varrho(1-\tilde{\chi}(-1))Q_{0}(2\epsilon).
\end{equation}
In order to estimate $I_1$, we divide it into two terms
\begin{align*}
	I_1&\leq  \left|\int_X-\chi((1+\epsilon)\tilde{\varphi})dd^c(u_1-u_3)\wedge T\right|
	+ \left|\int_{X\setminus U}-\chi((1+\epsilon)\tilde{\varphi})dd^c(u_1-u_3)\wedge T\right|\\
	&:=I_3+I_4.
\end{align*}

Note that $\tilde{u}_1-\tilde{u}_2=\epsilon(u_1+u_3-2\phi)/(1+\epsilon)$ on $X\setminus U$. Hence
$$I_4\leq \int_{X\setminus U}-\chi((1+\epsilon)\tilde{\varphi})(\theta_{u_1}+\theta_{u_3})\wedge T
\leq \int_{X\setminus U}-\chi(\epsilon(u_1+u_2-2\phi))(\theta_{u_1}+\theta_{u_3})\wedge T.$$
Using Lemma \ref{le-uocluongchiepsilon} again, we get
\begin{equation}\label{eqI4 lemforpr3.5}
	I_4\lesssim B\varrho(1-\tilde{\chi}(-1))Q_{0}(2\epsilon).
\end{equation}
Using integration by parts, we have
	$$I_3=(1+\epsilon)\left|\int_X\chi'((1+\epsilon)\tilde{\varphi})d\tilde{\varphi}\wedge
	d^c(u_1-u_3)\wedge T\right|.$$
Moreover, by Cauchy-Schwarz inequality and by the choice of $\epsilon$ (see \eqref{eq epsilon lemforpr3.5}), we get
$$\int_{\Gamma}\chi'((1+\epsilon)\tilde{\varphi})d\tilde{\varphi}\wedge
	d^c(u_1-u_3)\wedge T=0.$$
Hence
\begin{equation}\label{eqI3 lemforpr3.5}
	I_3=(1+\epsilon)\left|\int_{U\cup V}\chi'((1+\epsilon)\tilde{\varphi})d\tilde{\varphi}\wedge
	d^c(u_1-u_3)\wedge T\right|\leq (1+\epsilon)(I_5I_6)^{1/2}
\end{equation}
where
$$I_5=\int_{U\cup V}\chi'((1+\epsilon)\tilde{\varphi})d(u_1-u_3)\wedge
d^c(u_1-u_3)\wedge T,$$
and
$$I_6=\int_{U\cup V}\chi'((1+\epsilon)\tilde{\varphi})d\tilde{\varphi}\wedge
d^c\tilde{\varphi}\wedge T.$$
Since $(1+\epsilon)\tilde{\varphi}\leq \epsilon(u_1+u_3-2\phi)$, if $\chi\in\widetilde{\mathcal{W}}^-$ (hence $\chi'$ is nonnegative and increasing on $\R_{ \le 0}$) then
\begin{align*}
	I_5&\leq \int_X\chi'(\epsilon(u_1+u_3-2\phi))d(u_1-u_3)\wedge
	d^c(u_1-u_3)\wedge T\\
	& \lesssim \int_X\chi'(\epsilon(u_1+u_3-2\phi))d(u_1-\phi)\wedge d^c(u_1-\phi)\wedge T\\
	&+\int_X\chi'(\epsilon(u_1+u_3-2\phi))d(u_3-\phi)\wedge d^c(u_3-\phi)\wedge T\\
	&\leq \int_X\chi'(\epsilon(u_1-\phi))d(u_1-\phi)\wedge	d^c(u_1-\phi)\wedge T
	+\int_X\chi'(\epsilon(u_3-\phi))d (u_3-\phi)\wedge d^c(u_3-\phi)\wedge T\\
	&=\epsilon^{-1}\int_X-\chi(\epsilon(u_1-\phi))dd^c(u_1-\phi)\wedge T
	+\epsilon^{-1}\int_X-\chi(\epsilon(u_3-\phi))dd^c(u_3-\phi)\wedge T\\
	&\lesssim B\varrho(1-\tilde{\chi}(-1))\epsilon^{-1}Q_{0}(\epsilon),
\end{align*}
where the last estimate holds due to Lemma \ref{le-uocluongchiepsilon}.

Denote $v_1:=(u_1+2u_3)/3$ and $v_2:=(2u_1+u_3)/3$.
Since 
$$(1+\epsilon)(\tilde{u}_1-\tilde{u}_2)\geq u_1+u_3-2\phi, \quad u_1- u_3= -3(v_1- v_2),$$
one sees that  if $\chi\in\mathcal{W}_M^+$ (hence $\chi'$ is nonnegative and decreasing in $\R_{\le 0}$) then
\begin{align*}
	I_5&\leq \int_X\chi'((u_1+u_3-2\phi))d(u_1-u_3)\wedge
	d^c(u_1-u_3)\wedge T\\
	&\lesssim \int_X\chi'((u_1+u_3-2\phi))\big(d(v_1-\phi)\wedge
	d^c(v_1-\phi)+ d(v_2-\phi)\wedge d^c(v_2-\phi)\big)\wedge T\\
	&\leq \int_X\chi'(3(v_1-\phi))d(v_1-\phi)\wedge
	d^c(v_1-\phi)\wedge T
	+ \int_X\chi'(3(v_2-\phi))d(v_2-\phi)\wedge
	d^c(v_2-\phi)\wedge T\\
	&= \dfrac{1}{3}\int_X-\chi(3(v_1-\phi))dd^c(v_1-\phi)\wedge T
	+ \dfrac{1}{3}\int_X-\chi(3(v_2-\phi))dd^c(v_2-\phi)\wedge T\\
	&\leq \int_X-\chi(3(v_1-\phi))(\theta_{v_1}+\theta_{\phi})\wedge T
	+ \int_X-\chi(3(v_2-\phi))(\theta_{v_2}+\theta_{\phi})\wedge T\\
	&\leq 3^{M}\int_X-\chi(v_1-\phi)(\theta_{v_1}+\theta_{\phi})\wedge T
	+ 3^{M}\int_X-\chi(v_2-\phi)(\theta_{v_2}+\theta_{\phi})\wedge T\\
	&\lesssim B\varrho,
\end{align*}
where the two last estimates hold due to Lemma \ref{le-sosanhnangnluongintegrabig} and the fact
$$\log(-\chi(3t))-\log(-\chi(t))=\int_t^{3t}\dfrac{\chi'(s)}{\chi(s)}ds\le \int_t^{3t}\dfrac{M}{s}ds
=M\log 3,$$
for every $\chi\in\mathcal{W}_M^+$ and $t \le 0$. Combining the estimates in both cases, we obtain
\begin{equation}\label{eqI5 lemforpr3.5}
	I_5\lesssim B\varrho (1-\tilde{\chi}(-1))\dfrac{Q(\epsilon)^2}{\epsilon},
\end{equation}
where we used the inequality $Q(\epsilon) \ge \epsilon^{1/2}$ if $\chi \in \widetilde{W}^+_M$.   Now, we estimate $I_6$. Since $U, V$ are open in the plurifine topology and 
$$(1+\epsilon)\tilde{\varphi}=\begin{cases}\varphi\quad\mbox{on}\quad U\\
\epsilon(u_1+u_3-2\phi)\quad\mbox{on}\quad V\end{cases},$$ we have
\begin{align*}
	I_6&=\int_{U}\chi'((1+\epsilon)\tilde{\varphi})d\tilde{\varphi}\wedge
	d^c\tilde{\varphi}\wedge T+\int_{V}\chi'((1+\epsilon)\tilde{\varphi})d\tilde{\varphi}\wedge
	d^c\tilde{\varphi}\wedge T\\
	&=(1+\epsilon)^{-2}\int_{U}\chi'(\varphi)d\varphi\wedge
	d^c\varphi\wedge T\\
	&+\dfrac{\epsilon^2}{(1+\epsilon)^2}\int_{V}\chi'(\epsilon(u_1+u_3-2\phi))d(u_1+u_3-2\phi)\wedge
	d^c(u_1+u_3-2\phi)\wedge T\\
		&\leq J+\epsilon^2\int_{X}\chi'(\epsilon(u_1+u_3-2\phi))d(u_1+u_3-2\phi)\wedge
	d^c(u_1+u_3-2\phi)\wedge T\\
		&= J+\epsilon\int_{X}-\chi(\epsilon(u_1+u_3-2\phi))dd^c(u_1+u_3-2\phi)\wedge T.
\end{align*}
Therefore, it follows from Lemma \ref{le-uocluongchiepsilon} that
\begin{equation}\label{eqI6 lemforpr3.5}
	I_6\lesssim J+B\varrho (1-\tilde{\chi}(-1))\epsilon Q_{0}(2\epsilon).
\end{equation}
Combining \eqref{eqI2 lemforpr3.5}, \eqref{eqI3 lemforpr3.5}, \eqref{eqI4 lemforpr3.5}, \eqref{eqI5 lemforpr3.5}
and \eqref{eqI6 lemforpr3.5}, we get
\begin{align*}
	I\leq I_1+I_2&\leq I_3+I_4+I_2\\
	&\lesssim (I_5I_6)^{1/2}+I_4+I_2\\
	&\lesssim \left( B\varrho (1-\tilde{\chi}(-1))\epsilon^{-1}J\right)^{1/2} Q(\epsilon)+ B\varrho (1-\tilde{\chi}(-1)) Q(2\epsilon)^2.
\end{align*}
Letting $\epsilon\searrow J/(2\varrho)$ (and $\epsilon$ satisfies \eqref{eq epsilon lemforpr3.5}), we obtain
 $$I\lesssim B\varrho (1-\tilde{\chi}(-1)) Q(J/\varrho).$$
 The proof is completed.
\endproof

\begin{proposition} \label{pro-mainstabilitylowenergyconvex} Let $\chi, \tilde{\chi} \in\widetilde{\mathcal{W}}^-\cup\mathcal{W}^{+}_M$	such that $\tilde{\chi}\leq\chi$ and $\chi\in \Cc^1(\R)$.  Let $u_1, u_2, u_3 \in \mathcal{E}(X, \theta, \phi)$ such that $u_1 \le u_2$ and
	$u_j-\phi$ is bounded ($j=1, 2, 3$), where $\phi$ is a negative $\theta$-psh function satisfying 
	$\varrho:=\vol(\theta_\phi)>0$.  Then there exists a constant $C_n>0$ depending only on $n$ and $M$ such that 
	\begin{align} \label{ine-chigradientfixedtypelower}
		\int_X  \chi'(u_1-u_2) d(u_1- u_2) \wedge \dc (u_1- u_2) \wedge \theta_{u_3}^{n-1}  \le C_n\varrho B^2(1-\tilde{\chi}(-1))^2 Q^{\circ (n-1)}\big(I^0_\chi(u_1,u_2)\big),
	\end{align}
	where $B:=\sum_{j=1}^3 \max\{E^0_{\tilde{\chi},\theta, \phi}(u_j),1 \}$
	and $Q$ is defined by \eqref{eq QB}. 
\end{proposition}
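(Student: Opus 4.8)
The plan is to prove \eqref{ine-chigradientfixedtypelower} by induction on the number of factors $\theta_{u_3}$ appearing in the wedge power, each reduction step being fed by Lemma~\ref{le-secondhieuu1u2}. First, by a routine regularization (Lemma~\ref{le-regularizedchi}) one reduces to $\chi\in \Cc^{\infty}(\R)$: the left-hand side, the energies defining $B$, and $I^0_\chi(u_1,u_2)$ all converge to their counterparts for the limiting weights since $u_1-u_2$ and $u_3-\phi$ are bounded, and $M$ enters only through the constant. For a closed positive $(n-1,n-1)$-current $T$ that is a non-pluripolar product of $n-1$ forms chosen among $\theta_{u_1},\theta_{u_2},\theta_{u_3}$, one sets
\[
J(T):=\int_X\chi'(u_1-u_2)\,d(u_1-u_2)\wedge \dc(u_1-u_2)\wedge T\ \ (\ge 0),
\]
and for $0\le m\le n-1$ lets $J_m$ be the supremum of $J(T)$ over all such $T$ having at most $m$ factors equal to $\theta_{u_3}$; the quantity to be bounded is $J(\theta_{u_3}^{n-1})\le J_{n-1}$.

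\textbf{Base case and inductive step.} Expanding $\theta_{u_2}^n-\theta_{u_1}^n=\ddc(u_2-u_1)\wedge\sum_{k=0}^{n-1}\theta_{u_2}^k\wedge\theta_{u_1}^{n-1-k}$ and applying Theorem~\ref{th-integrabypart} to each summand gives $I_\chi(u_1,u_2)=\sum_{k=0}^{n-1}J(\theta_{u_2}^k\wedge\theta_{u_1}^{n-1-k})$; as $\chi'\ge 0$ on $\R_{\le 0}$ each summand is nonnegative, so $J_0\le I_\chi(u_1,u_2)=\varrho\,I^0_\chi(u_1,u_2)$. For the step one fixes $T=\theta_{u_3}^m\wedge\theta_{u_1}^a\wedge\theta_{u_2}^b$ with $a+b=n-1-m$ and $m\ge 1$, sets $R:=\theta_{u_3}^{m-1}\wedge\theta_{u_1}^a\wedge\theta_{u_2}^b$, and, writing $\theta_{u_3}=\theta_{u_1}+\ddc(u_3-u_1)$, peels off one factor $\theta_{u_3}$:
\[
J(T)=J(\theta_{u_1}\wedge R)+\int_X\chi'(w)\,dw\wedge \dc w\wedge \ddc(u_3-u_1)\wedge R,\qquad w:=u_1-u_2 .
\]
The first term is $\le J_{m-1}$ since $\theta_{u_1}\wedge R$ carries $m-1$ factors $\theta_{u_3}$. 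Integration by parts (the boundary terms vanishing by Stokes and closedness of $R$, the relative non-pluripolar calculus of Subsection~\ref{subsec-intebyparts} together with the boundedness of $u_j-\phi$ making this legitimate) converts the second term into $\int_X\chi'(w)\,dw\wedge\ddc w\wedge\dc(u_3-u_1)\wedge R$; substituting $\ddc w=\theta_{u_1}-\theta_{u_2}$ and $\dc(u_3-u_1)=-\dc(u_1-u_3)$, its absolute value is at most $|I_1|+|I_2|$ with
\[
I_i:=\int_X\chi'(u_1-u_2)\,d(u_1-u_2)\wedge\dc(u_1-u_3)\wedge\theta_{u_i}\wedge R\qquad(i=1,2).
\]
Crucially $\theta_{u_i}\wedge R$ carries only $m-1$ factors $\theta_{u_3}$, so Lemma~\ref{le-secondhieuu1u2} applies to each (taking $u_4,\dots,u_{n+2}$ among $u_1,u_2,u_3$, whence its constant $B$ is at most $(n+2)$ times the present one) and gives $|I_i|\le C\varrho B(1-\tilde\chi(-1))\,Q\big(J(\theta_{u_i}\wedge R)/\varrho\big)$ with $J(\theta_{u_i}\wedge R)\le J_{m-1}$. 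Taking the supremum over $a,b$ one obtains, with $C$ depending only on $n,M$,
\[
J_m\le J_{m-1}+C\varrho B(1-\tilde\chi(-1))\,Q\big(J_{m-1}/\varrho\big).
\]

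\textbf{Closing the induction.} From $J_0\le\varrho\,I^0_\chi(u_1,u_2)$ one proves by induction that $J_m\le C_m\varrho B^2(1-\tilde\chi(-1))^2\,Q^{\circ m}(I^0_\chi(u_1,u_2))$ with $C_m$ depending only on $n,M$. The mechanism is the subhomogeneity $Q(ct)\le c^{1/2}Q(t)$ for $c\ge 1$ (following from $Q_0(c\epsilon)\le cQ_0(\epsilon)$ and $Q(t)\ge t^{1/2}$ on $(0,1]$): feeding the inductive bound $J_{m-1}/\varrho\le C_{m-1}B^2(1-\tilde\chi(-1))^2\,Q^{\circ(m-1)}(I^0_\chi)$ into $Q$ extracts a factor $B(1-\tilde\chi(-1))$ which combines with the explicit $\varrho B(1-\tilde\chi(-1))$ of the previous step to reproduce the prefactor $\varrho B^2(1-\tilde\chi(-1))^2$ with \emph{no accumulation} of $B$'s, while $Q(Q^{\circ(m-1)}(I^0_\chi))=Q^{\circ m}(I^0_\chi)$. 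A short case analysis covers the regime $I^0_\chi(u_1,u_2)\ge 1$ and the index $m=1$, using $Q^{\circ k}\le 1$ for $k\ge 1$, $Q_0(1)\le 1$ (because $\tilde\chi\le\chi\le 0$), and the a priori estimate $I^0_\chi(u_1,u_2)\lesssim B(1-\tilde\chi(-1))$ coming from Lemma~\ref{le-uocluongchiepsilon}. Taking $m=n-1$ and recalling $J(\theta_{u_3}^{n-1})\le J_{n-1}$ finishes the proof.

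\textbf{Main obstacle.} The hard part will be the integration-by-parts identity turning the error term $\int_X\chi'(w)\,dw\wedge\dc w\wedge\ddc(u_3-u_1)\wedge R$ into a combination of the \emph{mixed} quantities of Lemma~\ref{le-secondhieuu1u2} carrying \emph{strictly fewer} $\theta_{u_3}$-factors — this is exactly what lets the induction close, and it is here that the relative non-pluripolar calculus of Subsection~\ref{subsec-intebyparts} and the hypothesis that $u_j-\phi$ be bounded enter essentially (for $\chi=\mathrm{id}$ this step is trivial, which is why only that case was previously accessible). The remaining delicate point is purely bookkeeping — keeping the exponent of $B$ equal to $2$ through all $n-1$ iterations — and relies on the subhomogeneity of $Q$.
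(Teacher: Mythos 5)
Your proof is correct and is essentially the paper's own argument in lightly different clothing: both reduce by descending induction on the number of $\theta_{u_3}$-factors, exchanging one $\theta_{u_3}$ for a $\theta_{u_1}$ (or $\theta_{u_2}$) via $\ddc(u_3-u_1)$, integrating by parts (you compose the paper's two applications of Theorem~\ref{th-integrabypart} into a single identity moving a $\dc$ from $w$ onto $u_3-u_1$), applying Lemma~\ref{le-secondhieuu1u2} to the resulting mixed integrals, and closing the recursion with the subhomogeneity of $Q$ from Lemma~\ref{lem qt1t2}. Your $J_m$ is exactly the envelope of the paper's $L_{k,l}$ with $k+l=n-1-m$, so the bookkeeping (including the case analysis for $I^0_\chi\ge1$ and the control of the powers of $B$) is the same.
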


\begin{proof} 
 Let 
$$\varphi:= u_1- u_2, \quad T:= \sum_{j=1}^{n-1} \theta_{u_1}^j \wedge \theta_{u_2}^{n-1-j} ,$$
and
$$T_{k,l}:= \theta_{u_1}^{k} \wedge \theta_{u_2}^l   \wedge  \theta_{u_3}^{n-k-l-1}, \quad L_{k,l}:= \int_X \chi'(\varphi) d \varphi \wedge \dc \varphi \wedge T_{k,l}.$$
Observe 
$$\theta_{u_2}^n- \theta_{u_1}^n=-  \ddc \varphi \wedge T$$
 and 
\begin{align}\label{ine-Tklntru1}
L_{k,n-1-k} \le \int_X \chi'(\varphi) d \varphi \wedge \dc \varphi \wedge T = \varrho I^0_\chi(u_1, u_2)
 \end{align}
 by integration by parts. We now prove by inverse induction on $m:= k+l$ that 
 \begin{align} \label{ine-chigradientfixedtypeLkl}
L_{k,l}  \le   C_{m,n}\varrho B^2(1-\tilde{\chi}(-1))^2Q^{\circ (n-1-k-l)}\big(I^0_\chi(u_1,u_2)\big),
\end{align}
for some  constant $C_{m,n}>1$ depending only on $m, n$ and $M$.  
The desired assertion (\ref{ine-chigradientfixedtypelower}) is the case where $k=l=0$. 
In what follows  we use the symbols $\lesssim$ and $\gtrsim$ for inequalities modulo a constant depending
only on $n$ and $M$. 
We have checked  (\ref{ine-chigradientfixedtypeLkl}) for $k+l=n-1$.  Suppose that (\ref{ine-chigradientfixedtypeLkl}) holds for $k+l=m$ with $0<m\leq n-1$.
 We will verify it for $L_{k-1,l}$, where
$k+l=m$ and $k>1$. The case $L_{k,l-1}$ is done similarly.  

Denote $S_{k-1, l}=\theta_{u_1}^{k-1} \wedge \theta_{u_2}^l   \wedge  \theta_{u_3}^{n-k-l-1}$. Then
$$L_{k-1, l}-L_{k, l}=\int_X \chi'(\varphi) d \varphi \wedge \dc \varphi\wedge dd^c(u_3-u_1) \wedge S_{k-1,l}.$$
Using integration by parts, we have
\begin{align*}
	L_{k-1, l}-L_{k, l}
	&=\int_X -\chi(\varphi)dd^c(\varphi)\wedge dd^c(u_3-u_1) \wedge S_{k-1,l}\\
	&=\int_X -\chi(\varphi) dd^c(u_3-u_1) \wedge T_{k,l}-\int_X -\chi(\varphi) dd^c(u_3-u_1) \wedge T_{k-1,l+1}\\
&=\int_X\chi'(\varphi) d\varphi\wedge d^c(u_3-u_1) \wedge T_{k,l}
-\int_X\chi'(\varphi) d\varphi\wedge d^c(u_3-u_1) \wedge T_{k-1,l+1}\\
\end{align*}
Therefore, it follows from Lemma \ref{le-secondhieuu1u2} that
$$L_{k-1, l}-L_{k, l}\lesssim \varrho B(1-\tilde{\chi}(-1))\left(Q(L_{k, l}/\varrho)+Q(L_{k-1, l+1}/\varrho)\right).$$
Hence, by using the inductive hypothesis, we get
\begin{align*}
	L_{k-1, l}&\lesssim \varrho B^2(1-\tilde{\chi}(-1))^2Q^{\circ (n-1-m)}\big(I^0_\chi(u_1,u_2)\big)\\
	&+\varrho B(1-\tilde{\chi}(-1))Q\left(C_{m,n}B^2(1-\tilde{\chi}(-1))^2Q^{\circ (n-1-m)}\big(I^0_\chi(u_1,u_2)\big)\right)\\
	&\lesssim \varrho B^2(1-\tilde{\chi}(-1))^2Q^{\circ (n-m)}\big(I^0_\chi(u_1,u_2)\big).
\end{align*}
Here we use the fact $Q(t_1)\leq (t_1/t_2)^{1/2}Q(t_2)$ for every $t_1>t_2>0$ (see Lemma \ref{lem qt1t2}).

Thus, \eqref{ine-chigradientfixedtypeLkl} holds for $L_{k-1, l}$. This finishes the proof.
\end{proof}

\begin{lemma}\label{lem qt1t2}
	The function  $h(t)=\dfrac{(Q(t))^2}{t}$ is non-increasing in $\R_{>0}$.
\end{lemma}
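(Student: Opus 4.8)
The plan is to check the monotonicity of $h$ separately on the three ranges of $t$ entering the definition \eqref{eq QB} of $Q$, and then to glue these pieces together. On $[1,\infty)$ we have $Q\equiv 1$, so $h(t)=1/t$ is strictly decreasing; and if $\chi\in\mathcal{W}^+_M$, then $Q(t)=t^{1/2}$ on $(0,1)$, so $h\equiv 1$ there. In both of these cases there is nothing to prove. The only substantial case is $\chi\in\widetilde{\mathcal{W}}^-$ on the interval $(0,1)$, where $h(t)=\dfrac{Q_0(t)}{Q_0(1)\,t}$ and, since $Q_0(1)>0$ is a constant, the assertion reduces to showing that $t\mapsto Q_0(t)/t$ is non-increasing on $(0,1)$.

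For this I would go back to the definition $Q_0(t)=\sup_{s\le -1}\dfrac{\chi(ts)}{\tilde\chi(s)}$, so that $\dfrac{Q_0(t)}{t}=\sup_{s\le -1}\dfrac{\chi(ts)}{t\,\tilde\chi(s)}$. A supremum of functions that are non-increasing in $t$ is again non-increasing in $t$, so it suffices to fix $s\le -1$ with $\tilde\chi(s)<0$ (the only $s$ contributing meaningfully to the supremum) and show that $t\mapsto \dfrac{\chi(ts)}{t\,\tilde\chi(s)}$ is non-increasing on $(0,1)$; dividing through by the negative constant $\tilde\chi(s)$, this is equivalent to $t\mapsto \dfrac{\chi(ts)}{t}$ being non-decreasing. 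Writing $\dfrac{\chi(ts)}{t}=s\cdot\dfrac{\chi(ts)-\chi(0)}{ts-0}$ and invoking the elementary fact that for a convex function the difference quotient $r\mapsto \dfrac{\chi(r)-\chi(0)}{r}$ is non-decreasing in $r$, one sees that as $t$ increases the point $ts$ decreases (because $s<0$), hence $\dfrac{\chi(ts)}{ts}$ is non-increasing in $t$; multiplying by the negative factor $s$ reverses the monotonicity and shows $\dfrac{\chi(ts)}{t}$ is non-decreasing in $t$, as desired.

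It remains to glue the pieces across $t=1$. On $[1,\infty)$ we have $h(t)=1/t\le 1$, whereas on $(0,1)$ the inequality $Q_0(t)\ge t\,Q_0(1)$ — recorded just after Lemma \ref{le-uocluongchiepsilon} in the case $\chi\in\widetilde{\mathcal{W}}^-$, and trivially an equality in the case $\chi\in\mathcal{W}^+_M$ — gives $h(t)\ge 1$. Therefore $h$ does not increase when $t$ passes through $1$, and combining this with the monotonicity already established on $(0,1)$ and on $[1,\infty)$ shows that $h$ is non-increasing on all of $\R_{>0}$.

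The step I expect to require the most care is the middle one: the sign bookkeeping in passing through the supremum and performing the two successive divisions by the negative quantities $\tilde\chi(s)$ and $s$, and keeping track of which way each reversal flips the monotonicity. A secondary, more minor point is that the gluing at $t=1$ must be shown to be genuinely monotone without proving continuity of $Q_0$; the inequality $Q_0(t)\ge tQ_0(1)$ is precisely what circumvents that.
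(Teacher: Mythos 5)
Your proof takes essentially the same route as the paper's: split at $t=1$, observe the two trivial cases, and reduce the substantial case ($\chi\in\widetilde{\mathcal{W}}^-$ on $(0,1)$) to the monotonicity of $Q_0(t)/t$, which follows from the non-decreasing difference quotient of the convex function $\chi$ after the same two sign-reversing divisions by $\tilde\chi(s)<0$ and by $s<0$. One small point in your favor: you explicitly handle the gluing across $t=1$ using the already-recorded inequality $Q_0(t)\ge t\,Q_0(1)$ (hence $h\ge 1$ on $(0,1)$ while $h\le 1$ on $[1,\infty)$), a step the paper's proof leaves implicit even though piecewise monotonicity alone does not give global monotonicity without it.
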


\begin{proof}
	If $\chi\in \mathcal{W}_M^+$ then we have
			$$h(t)=
	\begin{cases}
		\dfrac{1}{t} \quad\mbox{if}\quad t\ge 1,\\
		1\quad\mbox{if}\quad 0<t<1,
	\end{cases}$$
is a non-increasing function.

	 We consider the case $\chi\in\widetilde{\mathcal{W}}^-$. We have
	 $$h(t)=
	 \begin{cases}
	 	\dfrac{1}{t} \quad\mbox{if}\quad t\ge 1,\\
	 	\dfrac{Q_0(t)}{tQ_0(1)} \quad\mbox{if}\quad 0<t<1.
	 \end{cases}$$
 It is clear that $h$ is decreasing in $[1, \infty)$.
 We need to show that $h$ is non-increasing in $(0, 1)$. Since $\chi$ is convex, we have
 $$\dfrac{\chi (t_1s)}{t_1s}\leq\dfrac{\chi(t_2s)}{t_2s},$$
 for every $0<t_2<t_1<1$ and $s<0$. Dividing both sides of the last estimate by $\tilde{\chi}(s)/s$, we get
 $$\dfrac{\chi (t_1s)}{t_1\tilde{\chi}(s)}\leq\dfrac{\chi(t_2s)}{t_2\tilde{\chi}(s)}.$$
 Taking the supremum of both sides, we obtain 
 $$\dfrac{Q_0(t_1)}{t_1}=\sup_{s\leq -1}\dfrac{\chi (t_1s)}{t_1\tilde{\chi}(s)}\leq\sup_{s\leq-1}\dfrac{\chi(t_2s)}{t_2\tilde{\chi}(s)}=
 \dfrac{Q_0(t_2)}{t_2}.$$
 Then $h(t_1)\leq h(t_2)$. Hence, $h$ is non-increasing in $(0, 1)$. The proof is completed.
\end{proof}

\begin{proof}[End of the proof of Theorem \ref{th-lowerenergy}] By Lemma \ref{le-lientuccungkididwedgedc}, we can assume that $u_j, \psi_j$ are of the same singularity type as $\phi$.  Now let $(\chi_j)_{j \in \N}$, $(\tilde{\chi}_j)_{j \in \N}$ be the sequence approximating $\chi$, $\tilde{\chi}$ respectively  in  
Lemma \ref{le-regularizedchi}. By Lebesgue's dominated convergence theorem, observe that 
$$\lim_{j \to \infty} I^0_{\chi_j}(u_1,u_2)= I^0_\chi(u_1,u_2)$$
and 
$$\lim_{j \to \infty} \int_X -\chi_j(u_1-u_2)(\theta_{\psi_1}^n- \theta_{\psi_2}^n)= \int_X -\chi(u_1-u_2)(\theta_{\psi_1}^n- \theta_{\psi_2}^n).$$
On the other hand, for $\epsilon \in (0,1]$ we also get
$$\lim_{j \to\infty} Q_{\chi_j, \tilde{\chi}_j}(\epsilon)=Q_{\chi, \tilde{\chi}}(\epsilon),$$
because for $t \le -1$,  one has 
$$\frac{\chi_j(\epsilon t)}{\tilde{\chi}_j(t)} \le \frac{\chi_j(\epsilon t)}{\tilde{\chi}(t)} \le \frac{\chi(\epsilon t)- 2^{-j}}{\tilde{\chi}(t)}$$
and
$$\frac{\chi_j(\epsilon t)}{\tilde{\chi}_j(t)} \ge \frac{\chi_j(\epsilon t)}{\tilde{\chi}(t)- 2^{-j}} $$
which converges to $\chi(\epsilon t)/\tilde{\chi}(t)$
(by Lemma \ref{le-regularizedchi}). Hence, by considering $\chi_j, \tilde{\chi}_j$ instead of $\chi, \tilde{\chi}$, we can further assume that $\chi \in \Cc^1(\R)$. 
 
  Let $L$ be the left-hand side of the desired inequality.  We have
 \begin{align*}
 	L&=\int_X-\chi(u_1-u_2)(\theta_{\psi_1}^n-\theta_{u_1}^n)
 	-\int_X-\chi(u_1-u_2)(\theta_{\psi_2}^n-\theta_{u_1}^n)\\
 	&=\int_X-\chi(u_1-u_2)dd^c(\psi_1-u_1)\wedge T_1-\int_X-\chi(u_1-u_2)dd^c(\psi_2-u_1)\wedge T_2\\
 	&=L_1-L_2,
 \end{align*}
 where $T_j=\sum_{l=0}^{n-1}\theta_{\psi_j}^l\wedge\theta_{u_1}^{n-l-1}$. Using integration by parts
 and Lemma \ref{le-secondhieuu1u2}, we get
 \begin{align*}
 	L_1&=\int_X\chi'(u_1-u_2)d(u_1-u_2)\wedge d^c(\psi_1-u_1)\wedge T_1\\
 	&\leq C_1\varrho B (1-\tilde{\chi}(-1))Q\left(\varrho^{-1}\int_X\chi'(u_1-u_2)d(u_1-u_2)\wedge d^c(u_1-u_2)\wedge T_1\right),
 \end{align*}
where $C_1>0$ depends only on $n$ and $M$. Observe that there is a dimensional constant $C'_1$ such that 
$$T_1 \le C_1'\, \theta_{(\psi_1+ u_1)/2}^{n-1}.$$ 
Moreover  one has
$$E_{\tilde{\chi},\theta,\phi}\big((\psi_1+ u_1)/2\big) \lesssim E_{\tilde{\chi},\theta,\phi}(\psi_1)+E_{\tilde{\chi},\theta,\phi}(u_1)$$
by Lemma \ref{le-sosanhnangnluongintegrabig}. Hence, it follows from Proposition \ref{pro-mainstabilitylowenergyconvex} (applied to $u_3:= (\psi_1+ u_1)/2$) that
 $$\varrho^{-1}\int_X\chi'(u_1-u_2)d(u_1-u_2)\wedge d^c(u_1-u_2)\wedge T_1\leq
 C_2 B^2(1-\tilde{\chi}(-1))^2Q^{\circ (n-1)}\left(I^0_{\chi}(u_1, u_2)\right),$$
 where $C_2>1$ depends only on $n$ and $M$.
 Then
 $$L_1\leq C_3\varrho B^2 (1-\tilde{\chi}(-1))^2Q^{\circ n}\left(I^0_{\chi}(u_1, u_2)\right),$$
 where $C_3>0$ depends only on $n$ and $M$. Here we use the fact $Q(t_1)\leq (t_1/t_2)^{1/2}Q(t_2)$ for every $t_1>t_2>0$ (Lemma \ref{lem qt1t2}).
 
  By the same arguments, we also have
 $$-L_2\leq C_4\varrho B^2 (1-\tilde{\chi}(-1))^2Q^{\circ n}\left(I^0_{\chi}(u_1, u_2)\right),$$
 where $C_4>0$ depends only on $n$ and $M$.
 
 Hence
 $$L=L_1-L_2\leq (C_3+C_4)\varrho B^2 (1-\tilde{\chi}(-1))^2Q^{\circ n}\left(I^0_{\chi}(u_1, u_2)\right).$$
   The proof is completed.  
\end{proof}

\subsection{Proof of Theorem \ref{th-lowerenergy-kocochuanhoa}}

Recall that for every Borel set $E$ in $X$, we define
$$\capK_{\theta, \phi}(E):= \sup \bigg\{\int_E \theta_h^n: \quad h \in \PSH(X,\theta), \quad   \phi-1 \le h \le \phi  \bigg\}.$$
The following is an improvement of results from \cite{Lu-Darvas-DiNezza-logconcave,Lu-Darvas-DiNezza-mono}  (see also \cite{BEGZ,Kolodziej_2003}).

\begin{theorem}\label{the P[u]-C<u}
	Let $A\geq 1$ be a constant and let $\theta$ be a closed smooth real $(1,1)$-form such that $\theta\leq A\omega$.
	Let $\phi\in \PSH(X, \theta)$
	and $0\leq f\in L^p(X)$  for some constant $p>1$
	such that $\phi=P[\phi]$ and $0<\int_X f \omega^n=\int_X\theta_{\phi}^n:=\varrho$.
	Assume $u\in\mathcal{E}(X, \theta, \phi)$ satisfies $\sup_X(u-\phi)=0$ and $\theta_u^n=f\omega^n.$
	Then, there exists a constant $C \ge 1$ depending only on $X, \omega, n$ and $p$ such that
	\begin{equation}
		u\geq\phi-C\, A\left(\log\|f\,\mathrm{vol}_{\omega}(X)^q/\varrho\|_{L^p}+\log A+1\right),
	\end{equation}
 where $\vol_\omega(X):= \int_X\omega^n$ and $q=\dfrac{p}{p-1}$.
\end{theorem}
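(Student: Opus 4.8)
The plan is to transplant Ko\l odziej's classical a priori $L^\infty$-scheme (as in \cite{Kolodziej_Acta,BEGZ}) into the relative setting of potentials with prescribed singularity $\phi=P[\phi]$ developed in \cite{Lu-Darvas-DiNezza-mono,Lu-Darvas-DiNezza-logconcave}, while keeping every constant explicit so that the resulting bound comes out linear in $A$ and logarithmic in $\|f\,\vol_\omega(X)^q/\varrho\|_{L^p}$; note that the normalization by $\vol_\omega(X)^q/\varrho$ makes the right-hand side invariant under rescaling of $\omega$, so one may as well assume $\vol_\omega(X)=1$.

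First I would introduce, for $t\ge 0$, the sublevel sets $D_t:=\{u<\phi-t\}$; these decrease in $t$, have pluripolar intersection over $t>0$, and the goal is to show $D_t=\emptyset$ once $t$ exceeds the claimed threshold. The two ingredients are a comparison recursion and a quantitative capacity--volume estimate. For the recursion: given $0<s\le 1$ and a competitor $\psi\in\PSH(X,\theta)$ with $\phi-1\le\psi\le\phi$, set $v_s:=s\psi+(1-s)\phi$, so that $v_s\in\PSH(X,\theta,\phi)$, $\phi-s\le v_s\le\phi$, hence $D_{t+s}\subset\{u<v_s-t\}\subset D_t$; both $u$ and $v_s-t$ lie in $\mathcal{E}(X,\theta,\phi)$, and the comparison principle for potentials of full $\theta_\phi^n$-mass gives
\begin{align*}
s^n\int_{D_{t+s}}\theta_\psi^n\ \le\ \int_{\{u<v_s-t\}}\theta_{v_s}^n\ \le\ \int_{\{u<v_s-t\}}\theta_u^n\ \le\ \int_{D_t}f\,\omega^n .
\end{align*}
Taking the supremum over $\psi$ yields $s^n\,\capK_{\theta,\phi}(D_{t+s})\le \int_{D_t}f\,\omega^n$ for all $t\ge 0$.

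Next I would produce an increasing function $\Phi_A:\R_{\ge 0}\to\R_{\ge 0}$, explicit in $A$ and in $\|f\|_{L^p}$, with $\Phi_A(x)/x\to 0$ as $x\to 0^+$, such that $\int_E f\,\omega^n\le\Phi_A(\capK_{\theta,\phi}(E))$ for every Borel set $E$. By H\"older's inequality $\int_E f\,\omega^n\le\|f\|_{L^p(\omega^n)}\,\vol_\omega(E)^{1/q}$, so it suffices to bound $\vol_\omega(E)$ by a super-polynomially small function of $\capK_{\theta,\phi}(E)$. This is where the hypothesis $\phi=P[\phi]$ enters essentially: one combines the relative-capacity estimates of \cite{Lu-Darvas-DiNezza-mono,Lu-Darvas-DiNezza-logconcave} with Ko\l odziej's inequality $\vol_\omega(E)\le c_1\exp(-c_2\,{\rm Cap}_\omega(E)^{-1/n})$ and a comparison of the ordinary Monge--Amp\`ere capacity ${\rm Cap}_\omega$ with $\capK_{\theta,\phi}$ under the hypothesis $\theta\le A\omega$; propagating the factor $A$ through that comparison is precisely what produces the $A$-dependence of $\Phi_A$, and hence of the final bound.

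Finally, inserting this into the recursion reduces the theorem to Ko\l odziej's iteration lemma: a non-increasing, right-continuous $g\ge 0$ with $g(+\infty)=0$ satisfying $s^n g(t+s)\le\Phi_A(g(t))$ for all $t\ge 0$, $0<s\le 1$, must vanish once $t$ exceeds an explicit threshold controlled by the decay rate of $\Phi_A$. Here $g(t):=\capK_{\theta,\phi}(D_t)$, the normalization $\sup_X(u-\phi)=0$ together with $\capK_{\theta,\phi}(X)=\varrho$ gives $g(0)\le\varrho$, and unwinding the lemma delivers the threshold $C\,A\big(\log\|f\,\vol_\omega(X)^q/\varrho\|_{L^p}+\log A+1\big)$. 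Once $\capK_{\theta,\phi}(D_t)=0$ the set $D_t$ is pluripolar, and since $f\,\omega^n$ charges no pluripolar set a last comparison step upgrades this to $D_t=\emptyset$, i.e.\ $u\ge\phi-t$. I expect the main obstacle to be the capacity--volume step: converting the qualitative boundedness statements of \cite{Lu-Darvas-DiNezza-mono,Lu-Darvas-DiNezza-logconcave} into a genuinely \emph{quantitative} estimate that is uniform in $A$ (a naive passage through the absolute capacity ${\rm Cap}_\omega$ discards the $\phi$-relative information), with a secondary difficulty in the bookkeeping inside the iteration lemma needed to isolate the $A\log A$, $A\log\|f\|_{L^p}$ and $A$ contributions rather than merely concluding finiteness.
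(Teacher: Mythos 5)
Your overall scheme — sublevel sets $D_t=\{u<\phi-t\}$, the comparison recursion $s^n\,\capK_{\theta,\phi}(D_{t+s})\le\int_{D_t}f\,\omega^n$, a capacity--volume estimate, and a de Giorgi/Ko\l odziej iteration — is exactly the skeleton of the paper's proof, and your recursion is derived correctly (the paper obtains the same inequality using $h\in\PSH(X,\theta)$ with $\phi-1\le h\le\phi$ and the convex combination $(1-t)\phi+th$). However there are two concrete gaps in how you propose to close the argument.

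First, the capacity--volume step. You suggest bounding $\vol_\omega(E)$ by first comparing $\mathrm{Cap}_\omega(E)$ with $\capK_{\theta,\phi}(E)$ and then invoking Ko\l odziej's exponential bound for $\mathrm{Cap}_\omega$. You flag this as risky, and indeed it does not work here: the only available comparison $\mathrm{Cap}_\omega(E)\le f(\capK_{\theta,\phi}(E))$ in the generality of big classes with prescribed singularities is the \emph{qualitative} one of Lu (\cite{Lu-comparison-capacity}), with an inexplicit modulus $f$; making it quantitative and tracking the $A$-dependence is essentially a consequence of the very stability theorems this lemma is feeding, so the route is circular. What the paper does instead is bypass $\mathrm{Cap}_\omega$ entirely: since $\PSH(X,\theta)\subset\PSH(X,A\omega)$, the Skoda uniform integrability estimate gives $\int_X e^{-\psi/(A\nu)}\omega^n\le C_0^2$ for every normalized $\psi\in\PSH(X,\theta)$, and feeding this directly into the proof of \cite[Proposition 4.30]{Lu-Darvas-DiNezza-mono} (with \cite[Lemma 3.9]{Lu-Darvas-DiNezza-logconcave} in place of its original ingredient) yields $\int_E\omega^n\le C_0^2\exp\left(-\tfrac{1}{2A\nu}\left(\capK_{\theta,\phi}(E)/\varrho\right)^{-1/n}\right)$. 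This is the estimate that carries the linear-in-$A$ dependence; after H\"older and $e^{-1/t}\le m!\,t^m$ one gets $\varrho^{-1}\int_E\theta_u^n\le A_0A^{2n}\|f/\varrho\|_{L^p}\capK_{\theta,\phi}(E)^2/\varrho^2$, which is your $\Phi_A$ with $\Phi_A(x)\asymp A^{2n}\|f/\varrho\|_{L^p}x^2$.

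Second, the seed of the iteration. You write that $g(0)\le\varrho$ together with ``unwinding the lemma'' delivers the stated threshold, but the EGZ iteration lemma (\cite[Lemma 2.4 and Remark 2.5]{EGZ}) requires finding some $s_0$ with $g(s_0)<1/(2A_1^{1/n})$, where $A_1\asymp A^{2n}\|f/\varrho\|_{L^p}$ can be arbitrarily large, so the trivial bound $g(0)\le\varrho$ (equivalently $\varrho^{-1/n}\capK_{\theta,\phi}(D_0)^{1/n}\le 1$) is not small enough to start the iteration, and the recursion $tg(t+s)\le A_1^{1/n}g(s)^2$ alone cannot bootstrap from $g(0)\le 1$ when $A_1>1$. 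The paper closes this with a \emph{second}, separate estimate: H\"older plus Skoda give $\varrho^{-1}\int_X e^{-bw}\theta_u^n\le B_0\|f/\varrho\|_{L^p}$ with $b=(A\nu q)^{-1}$, from which $\capK_{\theta,\phi}(D_s)\lesssim\varrho\,\|f/\varrho\|_{L^p}e^{-bs}$, so that $g(s_0)<1/(2A_1^{1/n})$ holds once $s_0\gtrsim b^{-1}\left(\log A_1+\log B_1\right)\asymp A\left(\log\|f/\varrho\|_{L^p}+\log A+1\right)$. It is precisely this preliminary exponential decay, not the raw bound on $g(0)$, that produces the claimed threshold; without it the iteration lemma only gives finiteness, not the quantitative constant.
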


By H\"older inequalities, one sees that
$$1=\int_X\dfrac{f}{\varrho}\omega^n\leq \|f/\varrho\|_{L^p}\left(\mathrm{vol}_{\omega}(X)\right)^q,$$
and then $\log\|f \vol_{\omega}(X)^q/\varrho\|_{L^p}\geq 0$.

\begin{proof}
	Without loss of generality, we can assume that $\vol_{\omega}(X)=1$.
Recall that there exists a constant $\nu>0$ depending only on $X, \omega$ such that 		
$$\int_X \exp\left(-\psi/\nu\right)\omega^n\leq C_0^2,$$
	for every $\psi\in\PSH(X, \omega)$ with $\sup_X\psi=0$, where
	$C_0 \ge 1$ is a constant depending only on $X$ and $\omega$. Consequently, one gets
	$$\int_X \exp\left(-\psi/(A\nu)\right)\omega^n\leq C_0^2,$$
	for every $\psi\in\PSH(X, \theta)\subset\PSH(X, A\omega)$ with $\sup_X\psi=0$.
	By the same arguments as in the proof of
	\cite[Proposition 4.30]{Lu-Darvas-DiNezza-mono} (use  \cite[Lemma 3.9]{Lu-Darvas-DiNezza-logconcave}
	instead of \cite[Lemma 4.9]{Lu-Darvas-DiNezza-mono}), we have 
	$$\int_E\omega^n\leq C^2_0\exp\left(-\dfrac{1}{2A\nu}\left(\dfrac{\capK_{\theta, \phi}(E)}{\varrho}\right)^{-1/n}\right),$$
	for every Borel set $E\subset X$. Therefore, by the H\"older inequality and the fact
	$e^{-1/t}\leq m!t^m$ for every $m \in \N$ and every $t >0$, there exists $A_0>0$ depending only on 
	$X, \omega, n$ and $p$ such that
	\begin{equation}\label{eq0.1 proof the P[u]-C<u}
		\varrho^{-1}\int_E\theta_u^n= \int_E (f/\varrho) \omega^n \leq \|f/\varrho\|_{L^p}\left(\int_E \omega^n\right)^{1/q}\leq
		 A_0A^{2n}\|f/\varrho\|_{L^p}
		\dfrac{\capK_{\theta, \phi}(E)^2}{\varrho^2},
	\end{equation}
	for every Borel set $E\subset X$, where $1/p+1/q=1$. On the other hand, denoting
	$b=(A\nu q)^{-1}$ and $B_0=(C_0^2)^{1/q}$, we have
	\begin{equation}\label{eq0.2 proof the P[u]-C<u}
		\varrho^{-1}\int_Xe^{-b w}\theta_u^n\leq \|f/\varrho\|_{L^p}\left(\int_Xe^{-b q w}\omega^n\right)^{1/q}
		\leq B_0\|f/\varrho\|_{L^p},
	\end{equation}
	for every $w\in \PSH(X, \theta)$ with $\sup_X w=0$. 
	
	For every $h\in \PSH(X, \theta)$ with $\phi-1\leq h\leq \phi$, for each $0\leq t\leq 1$ and $s>0$, we have
	\begin{align*}
		t^n\int\limits_{\{u<\phi-t-s\}}\theta_h^n\leq\int\limits_{\{u<(1-t)\phi+t h-s\}}\theta_{(1-t)\phi+t h}^n
		&\leq \int\limits_{\{u<(1-t)\phi+t h-s\}}\theta_{u}^n\\
		&\leq \int\limits_{\{u<\phi-s\}}\theta_{u}^n,
	\end{align*}
	where the third estimate holds due to the comparison principle \cite[Lemma 2.3]{Lu-Darvas-DiNezza-logconcave}.
	Then
	\begin{equation}\label{eq1 proof the P[u]-C<u}
		t^n \, \capK_{\theta,\phi}(u<\phi-t-s)\leq \int\limits_{\{u<\phi-s\}}\theta_{u}^n,
	\end{equation}
	for every $0\leq t\leq 1$, $s>0$. Therefore, it follows from \eqref{eq0.1 proof the P[u]-C<u} that
	\begin{equation*} 
		t^n \, \varrho^{-1}\capK_{\theta,\phi}(u<\phi-t-s)\leq A_1 \,  \varrho^{-2}\capK_{\theta,\phi}(u<\phi-s)^{2},
	\end{equation*}
	where $A_1=A_0A^{2n}\|f/\varrho\|_{L^p}$.
	Putting $g(s)=\varrho^{-1/n}\capK_{\theta,\phi}(u<\phi-s)^{1/n}$, the above inequality becomes
	\begin{equation*} 
		tg(t+s)\leq A_1^{1/n}g(s)^2.
	\end{equation*}
	Hence, it follows from \cite[Lemma 2.4 and Remark 2.5]{EGZ} that if $g(s_0)<1/(2A_1^{1/n})$
	then $g(s)=0$ for all $s\geq s_0+2$. Moreover, by \eqref{eq1 proof the P[u]-C<u}
	and the condition \eqref{eq0.2 proof the P[u]-C<u}, we have
	$$g(s+1)^n\leq \varrho^{-1}\int\limits_{\{u<\phi-s\}}\theta_{u}^n\leq \varrho^{-1}\int\limits_{X}e^{b(\phi-u-s)}\theta_{u}^n
	\leq B_1 \, e^{-bs},$$
	for every $s>0$, where $B_1=B_0\|f/\varrho\|_{L^p}$. Then $g(s+1)<1/(2A_1^{1/n})$ provided that
	$$s>\frac{n\log 2+\log A_1}{b}+\frac{\log B_1}{b} \cdot$$ 
	Hence $g(s)=0$ for every 
	$$s\geq \frac{n\log 2+\log A_1}{b}+\frac{\log B_1}{b}+4.$$
	Thus
	\begin{center}
		$u\geq\phi -\left(\dfrac{n\log 2+\log A_1}{b}+\dfrac{\log B_1}{b}+4\right)
		= \phi-C_1\log\|f/\varrho\|_{L^p}-C_2,$
	\end{center}
	where $C_1=\frac{2}{b}=2\nu q A$ and 
	\begin{align*}
	C_2 &=4+\frac{n\log 2+\log A_0+\log B_0+2n\log A}{b}\\
	&=4+\nu q(n\log 2+\log A_0+\log B_0+2n\log A) A.
	\end{align*}
		The proof is finished.
\end{proof}

\begin{lemma}\label{lem vol estimate}
	There exists a constant $C>0$ depending only on $n, X$ and $\omega$ such that for every $u\in\PSH (X, \omega)$ satisfying $\sup_X u=0$ and for every constant $0<t \le 1$, one has
	\begin{equation}
		\int_{\{u>-t\}} \omega^n \geq C t^{2n}.
	\end{equation}
\end{lemma}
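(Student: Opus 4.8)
The plan is to reduce the statement to a standard volume–capacity type estimate for sublevel sets of $\omega$-psh functions. First I would normalize by fixing a point $x_0\in X$ where $u(x_0)=0$; such a point exists because $u$ is upper semicontinuous, $X$ is compact, and $\sup_X u=0$ (strictly, $u$ attains its supremum on the complement of a pluripolar set, but after a small perturbation one may assume the sup is attained — alternatively work with $\sup$ replaced by an almost-maximum point, which only costs an extra additive $\epsilon\to 0$). The key geometric input is that near $x_0$ the function $u$ cannot decrease too fast: in a fixed coordinate chart $B(x_0,r_0)\subset X$ (with $r_0$ depending only on $X,\omega$), writing $u=\rho+v$ with $\rho$ a smooth local potential for $\omega$ and $v$ psh, the Lelong–Jensen / submean inequality gives a bound of the form $\frac{1}{\vol(B(x_0,s))}\int_{B(x_0,s)}v\,d\Leb \geq v(x_0) - C_1$ for all $s\le r_0$, hence $u\ge -C_1 - C_2 s$ on average; but I want a pointwise lower bound on a set of definite measure.

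The cleaner route is the following. By the submean value property and the local boundedness of $\omega$-psh functions normalized by $\sup_X u =0$, there is a uniform constant $C_3=C_3(X,\omega)$ such that $u\ge -C_3$ on a ball $B(x_0,r_0)$ of fixed radius — more precisely, $u$ restricted to $B(x_0,r_0/2)$ is bounded below by a constant depending only on $X,\omega$ once we know $u(x_0)=0$, by the Harnack-type inequality for (quasi-)subharmonic functions. Hmm, that is not quite enough because we need the $\{u>-t\}$ sublevel set for \emph{small} $t$. So instead: after subtracting the smooth potential, $v$ is psh on $B(x_0,r_0)$ with $v(x_0)\ge -C_4$, and $\{v> v(x_0) - \tau\}$ has Lebesgue measure $\gtrsim \tau^{?}$ — but psh functions can be $-\infty$ nearby, so one really does need an explicit estimate. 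The standard one (e.g. from the proof of the uniform integrability / Skoda estimates) is: for $v$ psh on $B(0,1)\subset\C^n$ with $v(0)\ge -1$ and $v\le 0$, one has $\Leb\{v> -t\}\ge c\, t^{2n}$ for $0<t\le 1$; this follows by comparing with the fundamental solution $\log|z|$, i.e. Lelong number/integrability arguments, or concretely from $v(0)\ge -1$ combined with the fact that $v(z)\le \max_{\partial B(0,r)} v$-type mean estimates force $v$ to be close to its value at $0$ on a set whose measure shrinks at most polynomially.

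So the steps, in order: (1) locate a near-maximum point $x_0$ and pass to a fixed chart, replacing $u$ by $v = u - \rho$, psh, with $v\le C_5$ and $v(x_0)\ge -C_5$; (2) apply the quantitative estimate $\Leb\{z: v(z) > v(x_0) - s\}\ge c_0 s^{2n}$ valid for $0<s\le 1$, $c_0=c_0(n)$ (this is the technical core — prove it by the comparison argument with $\log|z-x_0|$, using that $\int_{B} (v(x_0)-v)\,d\Leb$ is controlled because $v(x_0)-v$ is a nonnegative psh-type... actually $v(x_0)-v$ need not be monotone, so one uses instead the sub-mean-value inequality over spheres plus Chebyshev); (3) on that set, $u = v+\rho \ge v(x_0) - s + \inf_B\rho \ge -C_6 - s$, so for the original target $t$, choosing $s$ comparable to $t$ (shrinking $t$ if necessary so that $t\le 1$ after absorbing the additive constant — here one uses $0<t\le 1$ and the freedom to enlarge $C$) yields $\{u>-t\}\supset$ a set of measure $\ge c\, t^{2n}$; (4) convert Lebesgue measure in the chart to $\int\cdot\,\omega^n$ by the fixed comparability of $\omega^n$ with $d\Leb$ on the chart.

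The main obstacle is step (2): the elementary-looking claim that a normalized psh function stays above $v(x_0)-s$ on a set of measure at least $c_0 s^{2n}$. The additive constant $C_6$ appearing in step (3) is also a nuisance — it means the estimate as literally stated (no additive loss, just $\{u>-t\}$) requires that $t$ be allowed to be small and that we first prove $u\ge -C_6$ near $x_0$ \emph{unconditionally}, then use a \emph{rescaled} version of (2) on the ball where $u+C_6$ is a small nonnegative-ish psh function. I expect the cleanest packaging is: prove $\Leb\{v>v(x_0)-s\}\ge c_0 s^{2n}$ directly (no additive constant) for $0<s\le 1$ by the Lelong-number comparison, then since $u\ge v - C_5$ on the chart and $v\le \sup_X v \le C_5$, we get... and one still has the additive $C_5$. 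Resolving this cleanly — i.e. getting the pure $t^{2n}$ with only a multiplicative constant and $0<t\le1$ — is where I would spend the real effort, likely by noting the problem is only nontrivial for $t$ small and handling $t\in[t_0,1]$ trivially ($\{u>-1\}$ already has definite measure since $u$ is not $\equiv-\infty$... no, it could be very negative away from $x_0$) — so in fact one genuinely needs the local argument at the near-max point for all scales, and the constant $C$ in the statement absorbs the transition. I would therefore structure the final proof around a single lemma: for $w$ psh on $B(0,2)\subset\C^n$ with $w(0)=0$ and $\sup_{B(0,1)} w\le 1$ (say), $\Leb\{z\in B(0,1): w(z)>-s\}\ge c(n)\,s^{2n}$ for $0<s\le 1$; prove it via the estimate $\int_{B(0,1)}(-w)\,d\Leb \le C(n)$ (Chern–Levine–Nirenberg-type, or just the sub-mean inequality) combined with the inequality $w(z)\ge -C(n)(1+\log(1/|z|))$ near $0$ that comes from $w(0)=0$ and boundedness of the Lelong number... — and then Chebyshev on a small ball around $0$ where $w\ge -s$ forces $|z|\gtrsim e^{-s/C}$, giving measure $\gtrsim e^{-2ns/C}$, which for small $s$ is $\gtrsim 1$, \emph{not} $s^{2n}$. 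That is too strong, suggesting instead the correct mechanism is just: $w$ being usc with $w(0)=0$ means $\{w>-s\}$ is an open neighborhood of $0$, but its \emph{size} is controlled below only polynomially because $\{w\le -s\}$ could be a thin complement — and the polynomial $s^{2n}$ is exactly the bound coming from the Lelong number being $\le C(n)$ (finite Lelong number $\nu$ gives $w(z)\ge \nu\log|z| - C$, so $w>-s$ on $|z|> e^{-(s-C)/\nu}$... still exponential). I suspect the honest statement uses that the Lelong number can be bounded in terms of the $L^1$ norm, and the $s^{2n}$ arises from $\{w>-s\}\supset\{$sub-mean-value forces it$\}$; I would look this up / reconstruct it carefully, as it is the one genuinely delicate point and the rest is bookkeeping about charts and the comparability $\omega^n\asymp d\Leb$.
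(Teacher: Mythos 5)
You correctly identify the reduction — pass to a fixed chart near a maximum point, subtract a smooth local potential $\rho_{j_0}$ so that $\widehat u + \widehat\rho$ is psh, and invoke the sub-mean value inequality — but you then \emph{abandon} the sub-mean inequality ("but I want a pointwise lower bound on a set of definite measure") and spend the rest of the attempt on Lelong-number and $\log|z|$-comparison ideas, which, as you yourself observe, produce exponential rather than polynomial bounds and lead nowhere. That detour is the gap: the sub-mean inequality, applied on a ball $w_0+r\B$ whose radius $r$ is chosen \emph{proportional to $t$}, already contains the whole argument via a trivial Chebyshev-type split, with no Lelong numbers at all. (Also, the worry about whether the max is attained is unfounded: $u$ is usc on the compact $X$, so it attains its supremum; the pluripolar caveat applies to upper envelopes, not to a single $\omega$-psh function.)

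Concretely, normalize $\widehat\rho(w_0)=0$ so $\widehat\rho \le C_\rho r$ on $w_0+r\B$ by smoothness. The sub-mean inequality gives $0 = (\widehat u+\widehat\rho)(w_0) \le C_\rho r + (c_{2n}r^{2n})^{-1}\int_{w_0+r\B}\widehat u\,dV$. Since $\widehat u\le 0$, one has $\int_{w_0+r\B}\widehat u \le -t\,\Leb\big((w_0+r\B)\cap\{\widehat u\le -t\}\big) = -t\,c_{2n}r^{2n}+t\,\Leb\big((w_0+r\B)\cap\{\widehat u>-t\}\big)$; substituting and rearranging gives $\Leb\big((w_0+r\B)\cap\{\widehat u>-t\}\big) \ge c_{2n}r^{2n}\bigl(1-C_\rho r/t\bigr)$. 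Choosing $r=t/(1+C_\rho)$ makes the right-hand side $\gtrsim t^{2n}$; converting Lebesgue measure on the chart to $\omega^n$-measure (they are uniformly comparable) finishes. This is exactly the paper's proof, and the $t^{2n}$ arises purely from the scaling of the ball volume — not from any Lelong-number or $\log|z|$ input. You had the right ingredients on the table; the step you missed is the splitting $\int\widehat u\le -t\Leb(\cdot)$ combined with the scale choice $r\asymp t$.
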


\begin{proof}
	Let $(U_j, \varphi_j)_{j=1}^m$ such that $U_j\subset X$ are open, $\varphi_j: 4\B \longrightarrow U_j$
	are biholomorphic and $\cup_{j=1}^m\varphi_j(\B)=X$ (where $\B$ is the open unit ball in $\C^n$), and  there is a smooth psh	function $\rho_j$ in $U_j$ such that $dd^c\rho_j=\omega$ for $1 \le j \le m$. Denote
	$$C_{\rho}=\sup\limits_{1\leq j\leq m}\sup\limits_{2\B}\|\nabla(\rho_j\circ\varphi_j)\|.$$
	Assume $u(z_0)=0$. Then there exists $1\leq j_0\leq m$ such that $z_0\in\varphi_{j_0}(\B)$.
	Denote $w_0=\varphi_{j_0}^{-1}(z_0)$, $\widehat{u}(w)=u\circ \varphi_{j_0}(w)$
	and $\widehat{\rho}(w)=\rho_{j_0}\circ\varphi_{j_0}(w)-\rho_{j_0}\circ\varphi_{j_0}(w_0)$. 
	By the plurisubharmonicity of $\widehat{u}+\widehat{\rho}$, for every $t>0$ and $0<r<1$,
	we have
	\begin{align*}
		0=(\widehat{u}+\widehat{\rho})(w_0)
		&\leq\dfrac{1}{\vol_{\C^n}(r\B)}\int_{r\B}(\widehat{u}+\widehat{\rho})dV_{2n}\\
		&\leq C_{\rho}r+\dfrac{1}{c_{2n}r^{2n}}\int_{r\B}\widehat{u} dV_{2n}\\
		&\leq C_{\rho}r-\dfrac{t}{c_{2n}r^{2n}}\int_{r\B \cap\{\widehat{u}\leq-t\}} dV_{2n}\\
		&\leq C_{\rho}r-t+\dfrac{t}{c_{2n}r^{2n}}\int_{r\B \cap\{\widehat{u}>-t\}}dV_{2n}\\
		&\leq C_{\rho}r-t+\dfrac{C_{\omega}t}{r^{2n}}\mathrm{vol}_\omega(\{u>-t\}),\\
	\end{align*}
	where $c_{2n}=\mathrm{vol}_{\C^n}(\B)$ and $C_{\omega}>0$ is a constant depending only on $n, X, \omega$.
	It follows that
	$$\mathrm{vol}_\omega(\{u>-t\})
	\geq \dfrac{r^{2n}}{C_{\omega}}\left(1-\dfrac{C_{\rho}r}{t}\right).$$
	Hence, for every $0<t<1$, by choosing $r=\frac{t}{1+C_{\rho}}$, we have
	$$\mathrm{vol}_\omega(\{u>-t\})\geq C t^{2n},$$
	where $C=\dfrac{1}{C_{\omega}(1+C_{\rho})^{2n+1}}$ depends only on $n, X$ and $\omega$.
\end{proof}

\begin{proof}[End of the proof of Theorem \ref{th-lowerenergy-kocochuanhoa}]
		Without loss of generality, we can assume that 
	$u_1\leq u_2$. Denote 
	$W_t=\{u_1>a_1-t\}$ for $0<t\leq 1$. We have
	\begin{equation}\label{eq1 proof GZ12}
		\int_{W_t}-\chi(u_1-u_2)\omega^n\leq\int_{W_t}-\chi(u_1-a_2)\omega^n\leq -b_t\chi(a_1-a_2-t),
	\end{equation} 
where $	b_t:=\vol (W_t)$.

	It follows from Lemma \ref{lem vol estimate} that $W_t\neq\emptyset$. Moreover,
	\begin{equation}\label{eq1.1 proof GZ12}
		b_t:=\int_{W_t}\omega^n\geq C_1\left(\dfrac{t}{A}\right)^{2n},
	\end{equation}
where $C_1>0$ is a constant depending only on $n, X$ and $\omega$. By \cite[Theorem A]{Lu-Darvas-DiNezza-logconcave}
(see also \cite[Theorem 3]{Vu_Do-MA}), there exists a unique $\varphi\in\mathcal{E}(X, \theta, \phi)$
with $\sup_X (\varphi-\phi)=0$ such that
$$\theta_{\varphi}^n=\dfrac{\varrho}{b_t}\mathbf{1}_{W_t}\omega^n.$$
It follows from Theorem \ref{the P[u]-C<u} that
	\begin{equation}\label{eq2 proof GZ12}
	\phi-C_2A\left(-\log t+\log A+1\right)\leq\varphi\leq 	\phi,
\end{equation}
for some constant $C_2 \ge 1$ depending only on $n, X$ and $\omega$. Thus, we have
	$$E^0_{\tilde{\chi}, \theta, \phi}(\varphi)\leq -\tilde{\chi}\big(-C_2\, A\left(-\log t+\log A+1\right)\big)
	\leq -C_3\left(\log\dfrac{Ae}{t}\right)^M\tilde{\chi}(-A),$$
	where $C_3>0$ depends only on $n, X, \omega$ and $M$.
	
Hence, it follows from Theorem \ref{th-lowerenergy} that
\begin{equation}\label{eq3 proof GZ12}
	\int_X-\chi(u_1-u_2)(\theta_{\psi}^n-\theta_{\varphi}^n)\leq 
	C_4\varrho \left(\log\dfrac{Ae}{t}\right)^{2M}(B-\tilde{\chi}(-A))^2(1-\tilde{\chi}(-1))^2\lambda,
\end{equation}
where $\lambda=Q^{\circ (n)}(I^0_\chi(u_1,u_2))$ and $C_4>0$ depends only on $n, X, \omega$ and $M$.

Combining \eqref{eq1 proof GZ12} and \eqref{eq3 proof GZ12}, we get
$$\int_X-\chi(u_1-u_2)\theta_{\psi}^n\leq -\varrho\chi(a_1-a_2-t)
	+C_4\varrho \left(\log\dfrac{Ae}{t}\right)^{2M}(B-\tilde{\chi}(-A))^2(1-\tilde{\chi}(-1))^2 \lambda.$$
	Letting $t\rightarrow \lambda^{m}$, we get 
	\begin{align*}
		\int_X-\chi(u_1-u_2)\theta_{\psi}^n&\leq -\varrho\chi(a_1-a_2-\lambda^m)
		+C_4\varrho \left(\log\dfrac{Ae}{\lambda^m}\right)^{2M}(B-\tilde{\chi}(-A))^2(1-\tilde{\chi}(-1))^2 \lambda\\
		&\leq -\varrho\chi(a_1-a_2-\lambda^m)+
		 C_5\varrho \dfrac{A^{(1-\gamma)/m}}{\lambda^{1-\gamma}}(B-\tilde{\chi}(-A))^2(1-\tilde{\chi}(-1))^2 \lambda\\
			&\leq  -\varrho\chi(a_1-a_2-\lambda^m)+
			 C_5\varrho A^{(1-\gamma)/m} (B-\tilde{\chi}(-A))^2(1-\tilde{\chi}(-1))^2 \lambda^{\gamma},
	\end{align*}
	where $C_5>0$ depends only on $n, X, \omega, M, m$ and $\gamma$.	
		The proof is completed.
\end{proof}

\begin{remark}\label{re-nhohontru1} \normalfont The hypothesis that $\tilde{\chi} \le \chi$ in Theorems \ref{th-lowerenergy} and \ref{th-lowerenergy-kocochuanhoa} can be slightly relaxed: the same statement remains true if $\tilde{\chi} \le \chi$ on $(-\infty, -1]$ and $\chi(-1)=-1$. Indeed, we only need the inequality $\tilde{\chi} \le \chi$ to guarantee that $E_{\chi,\theta,\phi}(u) \le E_{\tilde{\chi}, \theta,\phi}(u)$ for $u \in \PSH(X, \theta,\phi)$. If we only have $\tilde{\chi} \le \chi$ on $(-\infty, -1]$, then there holds 
$$E_{\chi,\theta,\phi}(u) \le E_{\tilde{\chi}, \theta,\phi}(u)-\chi(-1)\vol(\theta_\phi).$$
 This is still sufficient for the proof of Theorems \ref{th-lowerenergy} and \ref{th-lowerenergy-kocochuanhoa}.  

Later we will apply Theorem \ref{th-lowerenergy-kocochuanhoa} to the special case where $\chi(t)= \max\{t, -1\}$ and $\tilde{\chi} \in \widetilde{\mathcal{W}}^-$ with $\tilde{\chi}(-1)=-1$. In this case, we can compute explicitly $Q_{0,\chi,\tilde{\chi}}(\epsilon)= \sup_{\{t \le  -1\}}\frac{\chi(\epsilon t)}{\tilde{\chi}(t)}$ as follows. Observe that 
$$Q_{0,\chi,\tilde{\chi}}(\epsilon)= \max\big\{ \sup_{-\epsilon^{-1} \le t \le -1} \frac{\chi(\epsilon t)}{\tilde{\chi}(t)}, \sup_{t \le - \epsilon^{-1}}\frac{\chi(\epsilon t)}{\tilde{\chi}(t)}  \big\}=\max\big\{ \sup_{-\epsilon^{-1} \le t \le -1} \frac{\epsilon t}{\tilde{\chi}(t)}, \frac{-1}{\tilde{\chi}(-\epsilon^{-1})}  \big\}.$$
Since $\tilde{\chi} \in \widetilde{\mathcal{W}}^-$, the function $t/\tilde{\chi}(t)$ is decreasing, hence, $Q_{0,\chi,\tilde{\chi}}(\epsilon)= \big(-\tilde{\chi}(-\epsilon^{-1})\big)^{-1}$. 

If $\chi(t)= \tilde{\chi}(t)= -(-t)^p$ for some constant $p>0$, then one sees directly that $Q_{0,\chi,\tilde{\chi}}(\epsilon)= \epsilon^p$. However we will not use this special case in applications.
\end{remark}

\subsection{A counter-example} \label{ex-chingabangchi} 

Let 	$\chi(t):=-\log (-t+1)\in \mathcal{W}^-$. In this subsection, to simplify the notation, we denote $E_\chi(u):= E_{\chi, \omega, 0}(u)$, where by $0$ we mean the constant function equal to $0$.    Our goal in this subsection is to construct sequences of functions
$u_m, v_m\in \PSH(X, \omega)\cap L^{\infty}(X)$ such that the following properties hold:

(i) $0\geq u_m\geq v_m$, $\sup_X u_m= \sup_X v_m =0$, 

(ii) $u_m,v_m  \to 0$ in $L^1$ as $m\rightarrow\infty$,

(iii)
 $\sup_m(E_{\chi}(u_m)+E_{\chi}(v_m))<\infty$ and
 $\lim_{m\to\infty}I_{\chi}(u_m, v_m)=0$ but

(iv)  $$\inf_m \int_X-\chi(u_m-v_m)(dd^cv_m+ \omega)^n>0.$$

As a consequence of our construction of $u_m,v_m$ below, we see that 
Theorem \ref{th-lowerenergy-phanintro} (and Theorem \ref{the-mainstabilitylowenergyconvexintro}) does not hold in general if $\chi= \tilde{\chi} \in \mathcal{W}^-$. Here is our construction. On the unit ball  $\mathbb{B}$ of $\mathbb{C}^n$, we define
	$$\varphi_m=\max\{\log|z|, -e^{m}\}\quad\mbox{and}\quad F_m=\{z\in\B: \log|z|=-e^m\},  \quad m>0.$$

\begin{lemma} We have
	\begin{equation}\label{1example}
	\int_{F_m}(dd^c\varphi_m)^k\wedge (dd^c|z|^2)^{n-k}=\begin{cases}
	O(e^{-e^m})\quad\mbox{if}\quad k<n,\\
	c\quad\mbox{if}\quad k=n,
	\end{cases}
	\end{equation}
	where $c:=\int_{\{z=0\}}(dd^c \log|z|)^n>0$.
	\end{lemma}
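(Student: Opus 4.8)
The idea is to reduce the whole computation to one $m$-independent model on the unit sphere by rescaling, exploiting that each $\varphi_m$ is, up to an additive constant, a dilation of a single fixed potential.

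First I would set $r_m:=e^{-e^m}$, so that $F_m=\{|z|=r_m\}$ and $\log r_m=-e^m$. Let $s=s_m\colon z\mapsto z/r_m$ and $\psi(w):=\max\{\log|w|,0\}$, a fixed bounded, continuous (indeed locally Lipschitz) psh function on $\mathbb{C}^n$. Since $\log|z|=\log|z/r_m|-e^m$, one has
$$\varphi_m(z)=\max\{\log|z|,-e^m\}=-e^m+\psi(z/r_m)=-e^m+s^*\psi.$$
Because $s$ is a biholomorphism, pullback commutes with Bedford--Taylor products, so $(dd^c\varphi_m)^k=s^*\big((dd^c\psi)^k\big)$; and from $|z|^2=r_m^2|z/r_m|^2$ we get $(dd^c|z|^2)^{n-k}=r_m^{2(n-k)}\,s^*\big((dd^c|w|^2)^{n-k}\big)$. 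Since $s$ carries $F_m$ diffeomorphically onto the unit sphere $\mathbb{S}=\{|w|=1\}$, the change-of-variables formula gives
$$\int_{F_m}(dd^c\varphi_m)^k\wedge(dd^c|z|^2)^{n-k}=r_m^{2(n-k)}\int_{\mathbb{S}}(dd^c\psi)^k\wedge(dd^c|w|^2)^{n-k}=:r_m^{2(n-k)}\,c_k,$$
where $c_k\in[0,\infty)$ does not depend on $m$ (it is finite because $\psi$ is bounded psh, so $(dd^c\psi)^k$ has locally finite mass, and $\mathbb{S}$ is compact).

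Then I would split into the two regimes. If $k<n$, then $n-k\ge 1$ and $r_m^{2(n-k)}=e^{-2(n-k)e^m}$, so $r_m^{2(n-k)}/e^{-e^m}=e^{(1-2(n-k))e^m}\to 0$; hence $r_m^{2(n-k)}c_k=O(e^{-e^m})$, the first case. If $k=n$, the factor is $1$, so the integral equals $c_n=\int_{\mathbb{S}}(dd^c\psi)^n$. The measure $(dd^c\psi)^n$ vanishes on $\{|w|>1\}$ (there it equals $(dd^c\log|w|)^n=0$) and on $\{|w|<1\}$ (there $\psi$ is constant), and it puts no mass at $\{0\}$ since $\psi$ is bounded; hence it is carried by $\mathbb{S}$, and its total mass is computed by the comparison principle: $\psi$ agrees with $\log|w|$ on $\{|w|\ge 1\}$, so comparing on the ball $B(0,2)$ with the bounded potential $\max\{\log|w|,-N\}$ (which also coincides with $\psi$ near $\partial B(0,2)$) and letting $N\to\infty$ yields $\int_{\mathbb{C}^n}(dd^c\psi)^n=\int_{\mathbb{C}^n}(dd^c\log|w|)^n=c$. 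Thus $c_n=c$, the second case.

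The only point needing care is the bookkeeping around the singular behaviour on $F_m$: one must know that pullback by the dilation $s$ commutes with the Bedford--Taylor products of the bounded locally Lipschitz potential $\varphi_m$, and that ``$\int_{F_m}$'' legitimately denotes the mass that the positive measure $(dd^c\varphi_m)^k\wedge(dd^c|z|^2)^{n-k}$ assigns to $F_m$ --- its absolutely continuous part, namely $\mathbf{1}_{\{|z|>r_m\}}(dd^c\log|z|)^k\wedge(dd^c|z|^2)^{n-k}$, contributes nothing when $k<n$ because $F_m$ is Lebesgue--null, while when $k=n$ the whole measure is concentrated on $F_m$. Both facts are standard for bounded psh potentials, so once the rescaling identity above is in place the estimate follows at once.
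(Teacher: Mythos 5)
Your proof is correct but proceeds along a genuinely different route from the paper's. The paper splits into two cases: for $k=n$ it invokes Stokes' theorem directly (since $(dd^c\varphi_m)^n$ concentrates on $F_m$, and $\varphi_m$ agrees with $\log|z|$ near the boundary of any ball containing $F_m$), and for $k<n$ it dominates the mass on $F_m$ by the integral over the shrinking ball $\B_{2e^{-e^m}}$, uses Stokes to replace $\varphi_m$ by $\log|z|$ there, and evaluates that integral by regularization. You instead exploit the self-similarity of the family: under the dilation $s(z)=z/r_m$ with $r_m=e^{-e^m}$, every $\varphi_m$ equals $-e^m$ plus the fixed model $\psi=\max\{\log|\cdot|,0\}$, and the scaling of $(dd^c|z|^2)^{n-k}$ produces the explicit factor $r_m^{2(n-k)}$; this at once gives the decay $O(e^{-2(n-k)e^m})$ (sharper than the stated $O(e^{-e^m})$) when $k<n$, and constancy when $k=n$, so both regimes drop out of a single identity. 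The paper's route is a bit more elementary in that it avoids pulling back Bedford-Taylor products by a biholomorphism, while yours is more uniform and conceptually transparent. Two small points to tighten: $\psi$ is only \emph{locally} bounded (not bounded) on $\C^n$, which is all that is needed for $c_k<\infty$; and the step proving $c_n=c$ could be shortened by observing that $\max\{\log|w|,-N\}$ is itself a dilation of $\psi$, so its Monge-Amp\`ere mass is $N$-independent by the very same rescaling identity you have already set up.
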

	
	\proof 
The case $k=n$ follows from Stokes' theorem.  
We consider now $k<n$. Let $\B_r$ be the ball of radius $r>0$ centered at $0$ in $\C^n$. Observe that $\varphi_m = \log |z|$ on an open neighborhood of $\partial \B_{2 e^{-e^m}}$. Using this and Stokes' theorem, we obtain
\begin{align*}
\int_{F_m}(dd^c\varphi_m)^k\wedge (dd^c|z|^2)^{n-k} & \le \int_{\B_{2 e^{-e^m}}}(dd^c\varphi_m)^k\wedge (dd^c|z|^2)^{n-k}\\
&=\int_{\B_{2 e^{-e^m}}}(dd^c\log |z|)^k\wedge (dd^c|z|^2)^{n-k}.
\end{align*}	 
By direct computations (and approximating $\log |z|$ by $\frac{1}{2}\log (|z|^2+\epsilon)$ as $\epsilon \to 0$), we see that 
$$\int_{\B_{2 e^{-e^m}}}(dd^c\log |z|)^k\wedge (dd^c|z|^2)^{n-k}= O(e^{-e^m}).$$
Hence the desired assertion for $k<n$ follows.
	\endproof
	
 Let $g:\mathbb{B}\rightarrow U$ be  a biholomorphic mapping from $\B$ to an open subset $U$ of $X$.
  Let $\psi\in C_0^{\infty}(\mathbb{B})$ such that $0\leq\psi\leq 1$ and $\psi|_{\mathbb{B}_{1/2}}=1$. Denote
$$
\tilde{\varphi}_m=(\varphi_m\psi)\circ g^{-1}.$$
Then there exists a constant $A\geq 1$ such that $\tilde{\varphi}_m$ is $A\omega$-psh for every $m>0$. Now, for all $m>A^{-n}$, we define
$$u_m=\frac{\tilde{\varphi}_m}{\sqrt[n]{m+1}}\quad\mbox{and}\quad v_m=\frac{\tilde{\varphi}_{m+1}}{\sqrt[n]{m}}.$$
We have $u_m, v_m\in \PSH(X, \omega)\cap L^{\infty}(X)$ with $\sup_X u_m= \sup_X v_m =0$ and 
$0\geq u_m\geq v_m\stackrel{L^1}{\longrightarrow}0$ as $m\rightarrow\infty$. 

Put $\mu_m:=(dd^c u_m+\omega)^n$ and $\nu_m=(dd^c v_m+\omega)^n$. We have
\begin{equation}\label{2example}
\mathbf{1}_{X\setminus g (F_m)}\mu_m+  \mathbf{1}_{X\setminus g (F_{m+1})}\nu_m\leq C_1\omega^n,
\end{equation}
for every $m$, where $C_1>0$ is a constant. By \eqref{1example}, we also have
	\begin{equation}\label{3example}
\mu_m(g(F_m))=\frac{c}{m+1}+O(e^{-e^m})\quad\mbox{and}\quad  \nu_m(g(F_{m+1}))=\frac{c}{m}+O(e^{-e^m}).
\end{equation}
By \eqref{2example}, \eqref{3example} and by the fact $v_m\stackrel{L^1}{\longrightarrow}0$, there exists $C_2>0$ such that
$$E_{\chi}(v_m)\leq C_1\int_{X\setminus g(F_{m+1})}-\chi(v_m)\omega^n
-\chi\left(\frac{-e^{m+1}}{\sqrt[n]{m}}\right)\left(\frac{c}{m}+O(e^{-e^m})\right)\leq C_2,$$
for every $m\gg 1$. Hence, $\sup_m E_{\chi}(v_m)<\infty$. Since $v_m\leq u_m\leq 0$, we  also have $\sup_m E_{\chi}(u_m)<\infty$. 
On the other hand, 
\begin{align*}
\int_{X}-\chi (v_m-u_m)(dd^c v_m+\omega)^n & \geq \int_{g(F_{m+1})}-\chi (v_m-u_m)(dd^c v_m+\omega)^n\\
&\geq \frac{c}{m}\log\left(\frac{e^{m+1}}{\sqrt[n]{m}}-\frac{e^{m}}{\sqrt[n]{m+1}}+1\right)\\
&\geq  \frac{c}{m}\log\left(\frac{(e-1)e^{m}}{\sqrt[n]{m+1}}\right)
\geq\frac{c}{2},\\
\end{align*}
for $m\gg 1$. 
It remains to show that $\lim_{m\to\infty}I_{\chi}(u_m, v_m)=0$.
By \eqref{2example} and \eqref{3example}, we have
\begin{align*}
I_{\chi}(u_m, v_m)
&=\int_{X\setminus g(F_m\cup F_{m+1})}-\chi(v_m-u_m)(\nu_m-\mu_m)\\
	&-\chi\big((v_m-u_m)(e^{-e^{m+1}})\big)\nu_m(g(F_{m+1}))+\chi\big((v_m-u_m)(e^{-e^{m}})\big)\mu_m(g(F_{m}))\\
	&\leq C_1\int_{X}|v_m|\omega^n+\frac{c}{m}\log\left(\frac{e^{m+1}}{\sqrt[n]{m}}-\frac{e^{m}}{\sqrt[n]{m+1}}+1\right)\\
		&-\frac{c}{m+1}\log\left(\frac{e^{m}}{\sqrt[n]{m}}-\frac{e^{m}}{\sqrt[n]{m+1}}+1\right)+O(e^{-e^m})\\
			&\leq C_1\int_{X}|v_m|\omega^n+\frac{c}{m}\log\left(\frac{e}{\sqrt[n]{m}}-\frac{1}{\sqrt[n]{m+1}}+e^{-m}\right)\\
			&-\frac{c}{m+1}\log\left(\frac{1}{\sqrt[n]{m}}-\frac{1}{\sqrt[n]{m+1}}+e^{-m}\right)+\frac{c}{m+1}+O(e^{-e^m})\\
			&\stackrel{m\to\infty}{\longrightarrow}0.
\end{align*}
Hence, we get $\lim_{m\to\infty} I_{\chi}(u_m, v_m)=0.$

\section{Applications} \label{sec-appli}

\subsection{Quantitative version of Dinew's uniqueness theorem}

For every Borel set $E$ in $X$,  recall that the capacity of $E$ is given by 
 $$\capK(E)=\capK_\omega(E)=\sup_{\{w \in \PSH(X,\omega): 0 \le w  \le 1\}} \int_{E}\omega_w^n.$$
 We usually remove the subscript $\omega$ from $\capK_\omega$ if $\omega$ is clear from the context. There are generalizations of capacity in big cohomology classes, many of them are comparable; see  Theorem \ref{the comparisoncap} below and \cite{Lu-comparison-capacity}. Recall that a sequence of Borel functions $(u_j)_j$ is said to \emph{converge to  a  Borel function $u$ in capacity} if  for every constant $\epsilon>0$, we have that $\capK(\{|u_j-u| \ge \epsilon\})$  converges to $0$ as $j \to \infty$. Recall that for $u_j,u \in \PSH(X, \omega)$, if $u_j \to u$ in capacity, then $u_j \to u$ in $L^1$.
 
The convergence in capacity is of great importance in pluripotential theory in part because it implies the convergence of Monge-Amp\`ere operators under reasonable circumstances. To study quantitatively the convergence in capacity, it is convenient to introduce the following distance function on $\PSH(X,\omega)$: 
$$d_{\capK}(u, v):=\sup_{\{w \in \PSH(X,\omega): 0\le w \le 1\}}\int_X |u-v|^{1/2} \omega_w^n$$
for every $u, v \in \PSH(X,\omega)$ (note that $d_{\capK}(u, v)<\infty$ thanks to the Chern-Levine-Nirenberg inequality). The number $\frac{1}{2}$ in the definition of $d_\capK$ can be replaced by any constant in $(0,1)$.  One can see that for $u_j,u \in \PSH(X,\omega)$ for $j \in \N$,   $d_{\capK}(u_j, u) \to 0$ if and only if $|u_j-u| \to 0$ in capacity. Indeed,  if $d_{\capK}(u_j, u) \to 0$, then it is clear that $|u_j-u| \to 0$ in capacity. For the converse statement, assume that $|u_j-u|$ converges to $0$ in capacity, \emph{i.e,} for every constant $\delta>0$, we have
$$\lim_{j\to\infty} \capK\big(\{|u_j-u| \ge \delta\}\big)=0.$$
In particular, the $L^1$-norm of $u_j$ is bounded uniformly in $j$. Consequently
\begin{align*}
\int_X |u_j-u|^{1/2} \omega_w^n &\le \int_{\{|u_j-u| \le \delta\}} |u_j-u|^{1/2} \omega_w^n+\int_{\{|u_j-u| \ge \delta\}} |u_j-u|^{1/2} \omega_w^n\\
& \le \delta^{1/2} \int_X \omega^n+  \bigg(\int_{\{|u_j-u| \ge \delta\}}  \omega_w^n\bigg)^{1/2} \bigg(\int_{\{|u_j-u| \ge \delta\}} |u_j-u| \omega_w^n \bigg)^{1/2} \\
&(\text{H\"older's inequality})\\
& \lesssim \delta^{1/2} \int_X \omega^n+ \bigg(\capK\big(\{|u_j-u| \ge \delta\}\big)\bigg)^{1/2},
\end{align*}
by Chern-Levine-Nirenberg inequality. Hence $d_{\capK}(u_j,u) \to 0$ if $|u_j-u|\to 0$ in capacity. The following result is an immediate consequence
of the Chern-Levine-Nirenberg inequality.

\begin{proposition}\label{pro-dominatedcapacitybigomega} Let $\theta\leq A\omega$ be a closed smooth real $(1,1)$-form (where $A\geq 1$ is a constant) and $\phi$ be a model $\theta$-psh function with $\varrho:=\int_X\theta_{\phi}^n>0$.
	Let $0 \le w \le 1$ is an $\omega$-psh function and $\psi$ is the unique solution to the problem
	\begin{equation}\label{eq propfordomination}
		\begin{cases}
			u\in\mathcal{E}(X, \theta, \phi),\\
			\theta_u^n=\dfrac{\varrho}{\vol (X)}(\ddc w+ \omega)^n,\\
			\sup_X u=0.
		\end{cases}
	\end{equation}
	Then there exists a constant $C>0$ depending only on $X$ and $\omega$ such that 
	$$\int_X|\psi|\theta_{\psi}^n\leq C\, A\varrho.$$
\end{proposition}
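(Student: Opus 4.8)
The plan is to reduce the inequality to a Chern--Levine--Nirenberg estimate for the bounded potential $w$, after peeling off one factor of $\omega_w:=\ddc w+\omega$ at a time by integration by parts.

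First I would use the equation in \eqref{eq propfordomination}: one has $\theta_{\psi}^n=\frac{\varrho}{\vol(X)}\,\omega_w^n$, and since $\phi$ is a model function $\psi\le\phi\le 0$, so $|\psi|=-\psi$. Therefore
\[
\int_X|\psi|\,\theta_{\psi}^n=\frac{\varrho}{\vol(X)}\int_X(-\psi)\,\omega_w^n,
\]
and it is enough to bound $\int_X(-\psi)\,\omega_w^n$ by $C'A$ with $C'$ depending only on $X$ and $\omega$.

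Since $\psi$ may fail to be bounded, I would replace it by the truncations $\psi_k:=\max\{\psi,-k\}$, which are bounded $\theta$-psh functions with $-k\le\psi_k\le 0$; as $-\psi_k\nearrow-\psi$ and the measures $\omega_w^n$ and $\omega^n$ charge no pluripolar set, it suffices to bound $\int_X(-\psi_k)\,\omega_w^n$ uniformly in $k$ and then let $k\to\infty$ by monotone convergence. For the uniform bound, for each $0\le a\le n-1$ I would write $\omega^a\wedge\omega_w^{\,n-a}=\omega^a\wedge(\ddc w+\omega)\wedge\omega_w^{\,n-a-1}$ and integrate by parts (Bedford--Taylor for bounded potentials, or Theorem \ref{th-integrabypart} with $\chi=\id$) to get
\[
\int_X(-\psi_k)\,\omega^a\wedge\omega_w^{\,n-a}=\int_X w\,(\theta-\theta_{\psi_k})\wedge\omega^a\wedge\omega_w^{\,n-a-1}+\int_X(-\psi_k)\,\omega^{a+1}\wedge\omega_w^{\,n-a-1}.
\]
On the right, the term $-\int_X w\,\theta_{\psi_k}\wedge\omega^a\wedge\omega_w^{\,n-a-1}$ is $\le 0$ because $w\ge 0$, so it is dropped; and $\int_X w\,\theta\wedge\omega^a\wedge\omega_w^{\,n-a-1}\le A\int_X\omega^{a+1}\wedge\omega_w^{\,n-a-1}=A\vol(X)$ by $0\le w\le 1$, $\theta\le A\omega$ and $[\omega_w]=[\omega]$. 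Iterating over $a=0,1,\dots,n-1$ yields
\[
\int_X(-\psi_k)\,\omega_w^n\le nA\vol(X)+\int_X(-\psi_k)\,\omega^n\le nA\vol(X)+\int_X(-\psi)\,\omega^n.
\]

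To close the estimate I would observe that $\psi/A\in\PSH(X,\omega)$ (because $\theta\le A\omega$) and $\sup_X(\psi/A)=0$, so the classical uniform $L^1$-bound for sup-normalised $\omega$-psh functions gives $\int_X(-\psi)\,\omega^n=A\int_X(-\psi/A)\,\omega^n\le C_0A$ with $C_0=C_0(X,\omega)$. Combining, $\int_X(-\psi)\,\omega_w^n\le(n\vol(X)+C_0)A$, hence $\int_X|\psi|\,\theta_{\psi}^n\le CA\varrho$ with $C:=(n\vol(X)+C_0)/\vol(X)$. The only delicate point is the integration by parts for a possibly unbounded $\psi$, which is why I would carry the truncations $\psi_k$ through the whole computation and take $k\to\infty$ only at the very end; alternatively, the reduced statement $\int_X(-\psi)\,\omega_w^n\lesssim A$ is exactly the uniform Chern--Levine--Nirenberg inequality applied to $\psi/A\in\PSH(X,\omega)$, which one may simply quote.
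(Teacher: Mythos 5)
Your proof is correct, and it is essentially the paper's argument spelled out: the paper simply notes that the proposition is an immediate consequence of the Chern--Levine--Nirenberg inequality, and your iterated integration-by-parts scheme together with the observation that $\psi/A\in\PSH(X,\omega)$ is precisely the standard derivation of that CLN estimate, as you yourself point out in the final sentence.
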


Here is the main result of this subsection.

\begin{theorem}\label{thequantitativeuniqueness2}
		Let $\theta\leq A\omega$ be a closed smooth real $(1,1)$-form ($A\geq 1$) 
		and let $\phi$ be a  model $\theta$-psh function such that $\varrho:=\vol(\theta_\phi)>0$. 
	Let $B \ge 1$, $\tilde{\chi} \in \widetilde{\mathcal{W}}^-$ and $u_1, u_2\in \mathcal{E}(X, \theta, \phi)$ such that $\tilde{\chi}(-1)=-1$ and
	$$E^0_{\tilde{\chi}, \theta, \phi}(u_1)+E^0_{\tilde{\chi}, \theta, \phi}(u_2) \le B.$$
	Denote $\chi(t)=\max\{t, -1\}$.
	Then, for every $0<\gamma<1$,
	there exists $C>0$ depending only on $n, X, \omega$ and $\gamma$  such that 
	\begin{equation}\label{eq0uniqueness2}
		d_{\capK}(u_1, u_2)^2\leq C\, (A+|a_1-a_2|) \left( |a_1-a_2|
		+A(A+B)^2 \lambda^{\gamma}\right),
	\end{equation}
	where $a_j:= \sup_Xu_j$,
	$\lambda=\dfrac{1}{h^{\circ n}(1/I_{\chi}^0(u_1, u_2))}$
	and $h(s)=(-\tilde{\chi}(-s))^{1/2}$.
\end{theorem}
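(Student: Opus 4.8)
The plan is to estimate $\int_X|u_1-u_2|^{1/2}(\ddc w+\omega)^n$ uniformly over all $w\in\PSH(X,\omega)$ with $0\le w\le1$, by transporting the integral to the Monge--Amp\`ere measure of the solution $\psi$ of \eqref{eq propfordomination} provided by Proposition~\ref{pro-dominatedcapacitybigomega}, applying the truncated stability estimate of Theorem~\ref{th-lowerenergy-kocochuanhoa} with the weight $\chi(t)=\max\{t,-1\}$ of the statement (for which $-\chi(-s)=\min\{s,1\}$), and then recovering the full $d_{\capK}$-distance from the truncated mass by means of a capacity bound and an elementary a priori estimate. Only weights in $\widetilde{\mathcal{W}}^-$ enter, so the constants never depend on $M$.

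First I would fix $w$ and let $\psi\in\mathcal{E}(X,\theta,\phi)$ solve \eqref{eq propfordomination}, so $\theta_\psi^n=\tfrac{\varrho}{\vol_\omega(X)}(\ddc w+\omega)^n$. Proposition~\ref{pro-dominatedcapacitybigomega} gives $\int_X|\psi|\,\theta_\psi^n\le CA\varrho$; and since $\tilde\chi$ is convex with $\tilde\chi(0)=0$ and $\tilde\chi(-1)=-1$, the ratio $\tilde\chi(t)/t$ is non-decreasing on $(-\infty,0)$, so $-\tilde\chi(t)\le|t|$ for $t\le-1$ and $-\tilde\chi(t)\le1$ for $-1\le t\le0$. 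Splitting $X$ along $\{\psi-\phi\ge-1\}$ and using $|\psi-\phi|\le|\psi|$ (as $\psi\le\phi\le0$) then yields $E^0_{\tilde\chi,\theta,\phi}(\psi)\le C_1A$ with $C_1=C_1(n,X,\omega)$, while the same convexity gives $\tilde\chi\le\chi$. I would then apply Theorem~\ref{th-lowerenergy-kocochuanhoa} to the triple $(u_1,u_2,\psi)$ with energy bound $B':=B+C_1A\ge1$, with $m:=1$, and with the given $\gamma$; using $-\chi(-s)=\min\{s,1\}\le s$, $(1-\tilde\chi(-1))^2=4$, $-\tilde\chi(-A)\le A$, $A^{1-\gamma}\le A$, and the bound $Q^{\circ n}\big(I^0_\chi(u_1,u_2)\big)\le\lambda^\gamma$ (which follows from the explicit formula $Q_{\chi,\tilde\chi}(\epsilon)=(-\tilde\chi(-\epsilon^{-1}))^{-1/2}$ of Remark~\ref{re-nhohontru1}, identifying $\lambda=1/h^{\circ n}(1/I^0_\chi(u_1,u_2))$), I obtain, with $R:=|a_1-a_2|+A(A+B)^2\lambda^\gamma$,
\[
\int_X\min\{|u_1-u_2|,1\}\,\theta_\psi^n\le C_3\varrho R,\quad\text{hence}\quad\sup_{0\le w\le1}\int_X\min\{|u_1-u_2|,1\}\,(\ddc w+\omega)^n\le C_4R,
\]
with $C_3,C_4>0$ depending only on $n,X,\omega$ and $\gamma$.

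From the last display I would extract: $\capK\big(\{|u_1-u_2|>1\}\big)\le C_4R$ (since $\min\{|u_1-u_2|,1\}=1$ there), and the a priori bound $\int_X|u_1-u_2|\,(\ddc w+\omega)^n\le C_5(A+|a_1-a_2|)$ for all $0\le w\le1$, obtained by writing $|u_1-u_2|=(v-u_1)+(v-u_2)$ with $v:=\max\{u_1,u_2\}$, using $\int_X(v-\sup_Xv)(\ddc w+\omega)^n\le0$, the identity $(\sup_Xv-a_1)+(\sup_Xv-a_2)=|a_1-a_2|$, and the Chern--Levine--Nirenberg estimate $\int_X(a_i-u_i)(\ddc w+\omega)^n\le\|u_i-a_i\|_{L^1(\omega^n)}+nA\,\vol_\omega(X)\lesssim A$ (valid because $(u_i-a_i)/A\in\PSH(X,\omega)$ has supremum $0$, hence $L^1(\omega^n)$-norm bounded by a constant depending only on $X,\omega$). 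Then, for each such $w$, Cauchy--Schwarz gives $\int_{\{|u_1-u_2|\le1\}}|u_1-u_2|^{1/2}(\ddc w+\omega)^n\le(C_4R\,\vol_\omega(X))^{1/2}$ while
\[
\int_{\{|u_1-u_2|>1\}}|u_1-u_2|^{1/2}(\ddc w+\omega)^n\le\capK\big(\{|u_1-u_2|>1\}\big)^{1/2}\Big(\int_X|u_1-u_2|(\ddc w+\omega)^n\Big)^{1/2}\le(C_4R)^{1/2}\big(C_5(A+|a_1-a_2|)\big)^{1/2}.
\]
Taking the supremum over $w$, squaring, and using $A+|a_1-a_2|\ge1$ to absorb the first term, I conclude $d_{\capK}(u_1,u_2)^2\le C(A+|a_1-a_2|)R$, which is \eqref{eq0uniqueness2} with $C=C(n,X,\omega,\gamma)$.

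The main obstacle is the final passage from the truncated mass $\min\{|u_1-u_2|,1\}$ to $|u_1-u_2|^{1/2}$ on the set $\{|u_1-u_2|>1\}$: the weight $\chi=\max\{t,-1\}$ controls only the truncated quantity, and one cannot raise the truncation level, since replacing $\chi$ by $\max\{t,-K\}$ with $K>1$ would destroy the inequality $\tilde\chi\le\chi$ on $[-K,-1]$ (there $\tilde\chi(t)\ge t$). Thus the a priori bound $\int_X|u_1-u_2|(\ddc w+\omega)^n\lesssim A+|a_1-a_2|$ is indispensable, and it is essential that this bound involve $A+|a_1-a_2|$ (the cancellation being produced by passing through $v=\max\{u_1,u_2\}$) rather than $\|u_1\|_{L^1}+\|u_2\|_{L^1}$, which would bring in the forbidden quantities $|a_1|,|a_2|$. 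A secondary subtle point is the energy estimate for $\psi$, where the normalization $\tilde\chi(-1)=-1$ is precisely what allows the passage from $\int_X|\psi|\,\theta_\psi^n\lesssim A\varrho$ to $E^0_{\tilde\chi,\theta,\phi}(\psi)\lesssim A$.
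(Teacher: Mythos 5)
Your proof is correct and follows essentially the same route as the paper: transfer to $\theta_\psi^n$ via the Monge--Amp\`ere equation, apply Theorem~\ref{th-lowerenergy-kocochuanhoa} with the truncated weight $\chi(t)=\max\{t,-1\}$ (using Proposition~\ref{pro-dominatedcapacitybigomega} and the convexity/normalization $\tilde\chi(-1)=-1$ to bound $E^0_{\tilde\chi,\theta,\phi}(\psi)\lesssim A$), then recover $d_{\capK}$ by splitting along $\{|u_1-u_2|\le 1\}$, Cauchy--Schwarz, and an a priori CLN bound $\int_X|u_1-u_2|(\ddc w+\omega)^n\lesssim A+|a_1-a_2|$. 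The only cosmetic differences are that you avoid the paper's explicit case split on $I^0_\chi\lessgtr 1$ by absorbing both regimes into the inequality $Q^{\circ n}(I^0_\chi)\le\lambda^\gamma$, and you organize the final Cauchy--Schwarz slightly differently (via $\capK(\{|u_1-u_2|>1\})$ rather than factoring $I^{1/2}$ out of both pieces), neither of which changes the substance.
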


One sees that for $u_1,u_2 \in \mathcal{E}(X, \theta, \phi)$, we can find a common $\tilde{\chi} \in \mathcal{W}^-$ so that the assumption in Theorem \ref{thequantitativeuniqueness2} is satisfied. Thus if $\sup_X u_1= \sup_X u_2= 0$, and $\theta_{u_1}^n= \theta_{u_2}^n$, then the right-hand side of (\ref{eq0uniqueness2}) vanishes, hence, $u_1= u_2$. We then recover Dinew's uniqueness theorem for prescribed singularities potentials (\cite{BEGZ,Dinew-uniqueness,Lu-Darvas-DiNezza-mono}).

\begin{proof}
	Suppose that $w$ is an arbitrary $\omega$-psh function satisfying $0\leq w\leq 1$ and 
	$\psi$ is the unique solution
	to  the problem
	\begin{equation}\label{eqpsithedinewunique}
		\begin{cases}
			u\in\mathcal{E}(X, \theta, \phi),\\
			\theta_u^n=\dfrac{\varrho}{\vol (X)}(\ddc w+ \omega)^n,\\
			\sup_X u=0.
		\end{cases}
	\end{equation}
We split the proof into two cases.\\

\noindent
\textbf{Case 1.} Assume now that $I^0_\chi(u_1,u_2) \le 1$.  \\

\noindent
Hence, we get  $\lambda=Q_{\chi, \tilde{\chi}}^{\circ n}(I^0_\chi(u_1,u_2))$ (see Remark \ref{re-nhohontru1}),  and one has $-\tilde{\chi}(-A)\leq A$ because $\tilde{\chi}(-1)=-1$.
	It follows from Theorem \ref{th-lowerenergy-kocochuanhoa} and Proposition \ref{pro-dominatedcapacitybigomega} that, 
	for every $0<\gamma<1$,
	there exists $C_1>0$ depending only on $n, X, \omega$ and $\gamma$  such that 
	\begin{equation}\label{eq1thedcap}
		I:=\int_X  -\chi\big(-|u_1- u_2|\big) \theta_\psi^n \le -\varrho \chi\left(-|a_1-a_2|-\lambda\right)
		+C_1\varrho A(A+B)^2\lambda^{\gamma}.
	\end{equation}
	Moreover
	\begin{align*}
		\dfrac{\varrho}{\vol (X)}\int_X|u_1-u_2|^{1/2}(\ddc w+\omega)^n
		&=\int_X|u_1-u_2|^{1/2}\theta_{\psi}^n\\
		&=\int_{\{|u_1-u_2|\leq 1\}}|u_1-u_2|^{1/2}\theta_{\psi}^n+\int_{\{|u_1-u_2|> 1\}}|u_1-u_2|^{1/2}\theta_{\psi}^n
		\end{align*}
		which is less than or equal to
		$$\leq I^{1/2}\left( \left(\int_{\{|u_1-u_2|\leq 1\}}\theta_{\psi}^n\right)^{1/2}+
		\left(\int_{\{|u_1-u_2|> 1\}}|u_1-u_2|\theta_{\psi}^n\right)^{1/2}\right),$$
	where the last estimate holds due to the Cauchy-Schwarz inequality. Moreover, it follows from 
	Chern-Levine-Nirenberg inequality (\cite{Kolodziej05}) that
	\begin{align} \label{ine-revisedu1u2a1a2}
		\int_{X}|u_1-a_1-u_2+a_2|\theta_{\psi}^n
		&=\dfrac{\varrho}{\vol (X)}\int_{X}|u_1-a_1-u_2+a_2|(\ddc w+ \omega)^n\\
		\nonumber
		&\leq C_2\varrho (\|u_1-a_1\|_{L^1(X)}+\|u_2-a_2\|_{L^1(X)})\\
		\nonumber
		&\leq \varrho C_3A,
	\end{align}
where $C_2, C_3>0$ depend only on $X$ and $\omega$. Here, the last estimate holds due to the 
compactness of $\{u\in\PSH(X, \omega): \sup_Xu=0\}$ in $L^1(X)$.

Hence, we have
	\begin{equation}\label{eq2thedcap}
		\dfrac{\varrho}{\vol (X)}\int_X|u_1-u_2|^{1/2}(\ddc w+\omega)^n\leq C_4 I^{1/2}\varrho^{1/2}(A+|a_1-a_2|)^{1/2},
	\end{equation}
	where $C_4>0$ depends only on $X$ and $\omega$.
	
	Combining \eqref{eq1thedcap} and \eqref{eq2thedcap}, we get
	\begin{align*}
		\left(\int_X|u_1-u_2|^{1/2}(\ddc w+\omega)^n\right)^2
		&\leq C_5(A+|a_1-a_2|)\left(-\chi\left(-|a_1-a_2|-\lambda\right)
		+A(A+B)^2\lambda^{\gamma}\right)\\
		&\leq  C_5(A+|a_1-a_2|)\left(|a_1-a_2|+\lambda
		+A(A+B)^2\lambda^{\gamma}\right)\\
		&\leq C_6(A+|a_1-a_2|)\left(|a_1-a_2|
		+A(A+B)^2\lambda^{\gamma}\right),
	\end{align*}
	where $C_5, C_6>0$ depend only on $n, X, \omega$ and $\gamma$. Since $w$ is arbitrary, we obtain desired inequality.	\\
	
\noindent
\textbf{Case 2.} We treat now the case where $I^0_\chi(u_1,u_2) \ge 1$.  
\\

\noindent
Observe that $\lambda \ge 1$ in this case. Hence the right-hand side of 
(\ref{eq0uniqueness2}) is greater than or equal to $C(A+ |a_1-a_2|)$ because $A \ge 1$ and $\lambda \ge 1$. On the other hand, H\"older's inequality gives
\begin{align*}
	\bigg(\int_X|u_1-u_2|^{1/2}(\ddc w+\omega)^n\bigg)^{2}  &\lesssim \int_X|u_1-u_2|(\ddc w+\omega)^n\\
	& \le \int_X|u_1-a_1- u_2+a_2|(\ddc w+\omega)^n+ |a_1-a_2| \int_X \omega^n\\
& \lesssim A+|a_1-a_2|
\end{align*}
by (\ref{ine-revisedu1u2a1a2}). Thus the desired estimate holds.	
		The proof is completed.
\end{proof}

\begin{remark}\label{rmkAB}
	If $B\geq A$ then the inequality \eqref{eq0uniqueness2} is equivalent to
		$$d_{\capK}(u_1, u_2)^2\leq \widetilde{C}\, (A+|a_1-a_2|) \left( |a_1-a_2|
		+A\, B^2 \lambda^{\gamma}\right),$$
	where $\tilde{C}>0$ depends only on $n, X, \omega$ and $\gamma$.
\end{remark}

\subsection{Quantitative version for the domination principle}

\begin{theorem}\label{the domination}
	Let $A \ge 1$ be a constant and let $\theta\leq A\omega$ be a closed smooth real $(1,1)$-form 
	 and $\phi$ be a  model $\theta$-psh function, and $\varrho:=\vol(\theta_\phi)>0$.  Let $B \ge 1$ be a constant, $\tilde{\chi} \in \widetilde{\mathcal{W}}^-$ and $u_1, u_2\in \mathcal{E}(X, \theta, \phi)$ such that $\tilde{\chi}(-1)=-1$ and
	$$E^0_{\tilde{\chi}, \theta, \phi}(u_1)+E^0_{\tilde{\chi}, \theta, \phi}(u_2) \le B.$$
	Assume that there exists a constant $0\leq c<1$ and a Radon measure $\mu$ on $X$ satisfying
	$\theta_{u_1}^n\leq c\theta_{u_2}^n+\varrho\mu$ on $\{u_1<u_2\}$ and 
	$c_{\mu}:=\int_{\{u_1<u_2\}}d\mu\leq 1$.
	Then there exists a constant $C>0$ depending only on $n, X$ and $\omega$   such that 
	$$\capK_{\omega}\{u_1<u_2-\epsilon\}\leq 
	\dfrac{C\vol(X) (A+B)^2}{\epsilon(1-c)h^{\circ n}(1/c_{\mu})},$$
	for every $0<\epsilon<1$,
	where  $h(s)=(-\tilde{\chi}(-s))^{1/2}$ for every $0 \le s \le \infty$.
	
	In particular, if $c_{\mu}=0$ then $\capK_{\omega}\{u_1<u_2-\epsilon\}=0$ for every $\epsilon>0$,
	and then $u_1\geq u_2$ on whole $X$.
\end{theorem}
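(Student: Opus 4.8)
The plan is to estimate, for an arbitrary $\omega$-psh function $w$ with $0\le w\le 1$, the quantity $\int_{\{u_1<u_2-\epsilon\}}(\ddc w+\omega)^n$, and then to take the supremum over such $w$. First I would fix $w$ and let $\psi\in\mathcal{E}(X,\theta,\phi)$ be the unique solution of $\theta_\psi^n=\frac{\varrho}{\vol(X)}(\ddc w+\omega)^n$ with $\sup_X\psi=0$, as in \eqref{eq propfordomination}. Proposition \ref{pro-dominatedcapacitybigomega} gives $\int_X|\psi|\theta_\psi^n\le C A\varrho$; together with $|\psi-\phi|\le|\psi|$ (since $\psi\le\phi\le 0$) and the elementary bound $-\tilde{\chi}(t)\le 1+|t|$ for $t\le 0$ (valid since $\tilde{\chi}$ is convex non-decreasing with $\tilde{\chi}(0)=0$ and $\tilde{\chi}(-1)=-1$), this yields $E^0_{\tilde{\chi},\theta,\phi}(\psi)\le C' A$. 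Next I set $v:=\max\{u_1,u_2\}$, so that $u_1\le v\le\phi$, $v\in\mathcal{E}(X,\theta,\phi)$ by monotonicity, and $E^0_{\tilde{\chi},\theta,\phi}(v)\le C'' E^0_{\tilde{\chi},\theta,\phi}(u_1)\le C'' B$ by Lemma \ref{le-sosanhnangnluongintegrabig}. Throughout I use $\chi(t):=\max\{t,-1\}\in\widetilde{\mathcal{W}}^-$, noting $\tilde{\chi}\le\chi$.

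The key point is that the hypothesis on the Monge-Amp\`ere masses is exactly an upper bound for an $I^0_\chi$-functional. Since $u_1\le v$ and $v=u_2$ on $\{u_1<u_2\}$, with $\mathbf{1}_{\{u_1<u_2\}}\theta_v^n=\mathbf{1}_{\{u_1<u_2\}}\theta_{u_2}^n$ by plurifine locality, the definition of $I^0_\chi$ collapses to $\varrho I^0_\chi(u_1,v)=\int_{\{u_1<u_2\}}\bigl(-\chi(u_1-u_2)\bigr)(\theta_{u_1}^n-\theta_{u_2}^n)$; plugging in $\theta_{u_1}^n\le c\,\theta_{u_2}^n+\varrho\mu$ on $\{u_1<u_2\}$ and $0\le-\chi\le 1$ gives $I^0_\chi(u_1,v)\le c_\mu$. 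On the other hand, $-\chi(u_1-v)=\min\{v-u_1,1\}\ge\epsilon$ on $\{u_1<u_2-\epsilon\}$ (there $v=u_2$, and $\epsilon<1$), so, writing $P_\psi:=\int_X\bigl(-\chi(u_1-v)\bigr)\theta_\psi^n$ and recalling $\theta_\psi^n=\frac{\varrho}{\vol(X)}(\ddc w+\omega)^n$, one gets $\int_{\{u_1<u_2-\epsilon\}}(\ddc w+\omega)^n\le\frac{\vol(X)}{\varrho\,\epsilon}\,P_\psi$. It thus remains to bound $P_\psi$.

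To bound $P_\psi$ I close a short three-potential loop. Put $P:=\int_X\bigl(-\chi(u_1-v)\bigr)\theta_v^n$ and $P_1:=\int_X\bigl(-\chi(u_1-v)\bigr)\theta_{u_1}^n$; the same computation as above combined with $\theta_{u_1}^n\le c\,\theta_{u_2}^n+\varrho\mu$ on $\{u_1<u_2\}$ gives $P_1\le c\,P+\varrho\,c_\mu$. Applying Theorem \ref{th-lowerenergy} to the pair $(u_1,v)$ (which satisfies $u_1\le v$), once with $(\psi_1,\psi_2)=(v,u_1)$ and once with $(\psi_1,\psi_2)=(\psi,v)$ — all relevant energy sums being $\lesssim A+B$ and $(1-\tilde{\chi}(-1))^2=4$ — one gets $P-P_1\le C\varrho(A+B)^2\,Q^{\circ n}(c_\mu)$ and $P_\psi-P\le C\varrho(A+B)^2\,Q^{\circ n}(c_\mu)$, using that $Q=Q_{\chi,\tilde{\chi}}$ is increasing and $I^0_\chi(u_1,v)\le c_\mu$. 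Substituting $P_1\le c P+\varrho c_\mu$ into the first inequality gives $(1-c)P\le\varrho c_\mu+C\varrho(A+B)^2 Q^{\circ n}(c_\mu)$, and then the second gives $P_\psi\le P+C\varrho(A+B)^2 Q^{\circ n}(c_\mu)\le\frac{C'''\varrho(A+B)^2}{1-c}Q^{\circ n}(c_\mu)$, where I also use $c_\mu\le Q^{\circ n}(c_\mu)$ — which holds since $h(s)=(-\tilde{\chi}(-s))^{1/2}\le s$ for $s\ge 1$, whence $h^{\circ n}(1/c_\mu)\le 1/c_\mu$. Taking the supremum over $w$ and invoking Remark \ref{re-nhohontru1} — which for $\chi(t)=\max\{t,-1\}$ and $\tilde{\chi}(-1)=-1$ gives $Q_{0,\chi,\tilde{\chi}}(\epsilon)=\bigl(-\tilde{\chi}(-\epsilon^{-1})\bigr)^{-1}$, hence $Q_{\chi,\tilde{\chi}}(\epsilon)=1/h(1/\epsilon)$ on $(0,1]$ and therefore $Q^{\circ n}(c_\mu)=1/h^{\circ n}(1/c_\mu)$ — yields the stated inequality. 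For the last assertion, given $u_1,u_2\in\mathcal{E}(X,\theta,\phi)$ one may, after rescaling, choose $\tilde{\chi}\in\mathcal{W}^-$ with $\tilde{\chi}(-1)=-1$ and $E^0_{\tilde{\chi},\theta,\phi}(u_1),E^0_{\tilde{\chi},\theta,\phi}(u_2)<\infty$ (a de la Vall\'ee--Poussin argument); then $h^{\circ n}(s)\to\infty$ as $s\to\infty$, so $c_\mu=0$ forces $\capK_\omega\{u_1<u_2-\epsilon\}=0$ for every $\epsilon>0$; such a set is pluripolar, hence $u_1\ge u_2-\epsilon$ everywhere for all $\epsilon>0$, i.e. $u_1\ge u_2$ on $X$.

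The main difficulty is conceptual rather than computational: one has to recognize that the hypothesis $\theta_{u_1}^n\le c\,\theta_{u_2}^n+\varrho\mu$ on $\{u_1<u_2\}$ encodes precisely the bound $I^0_\chi(u_1,\max\{u_1,u_2\})\le c_\mu$ for the weight $\chi(t)=\max\{t,-1\}$, and then to thread the measure $\theta_\psi^n=\frac{\varrho}{\vol(X)}(\ddc w+\omega)^n$ through Theorem \ref{th-lowerenergy} by the three-potential loop above, all while keeping $E^0_{\tilde{\chi},\theta,\phi}(\psi)$ under control via Proposition \ref{pro-dominatedcapacitybigomega}. The remainder — matching $Q^{\circ n}$ with $h^{\circ n}$ through Remark \ref{re-nhohontru1} and tracking the factors $(1-c)^{-1}$ and $(A+B)^2$ — is routine bookkeeping.
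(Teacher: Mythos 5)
Your proposal is correct and follows essentially the same route as the paper: recognize that the hypothesis encodes $I^0_\chi(u_1,\max\{u_1,u_2\})\le c_\mu$ for $\chi(t)=\max\{t,-1\}$, control the energy of $\psi$ via Proposition \ref{pro-dominatedcapacitybigomega}, run Theorem \ref{th-lowerenergy} twice on the pair $(u_1,v)$, and translate $Q^{\circ n}$ into $h^{\circ n}$ via Remark \ref{re-nhohontru1}. The paper packages the two applications of Theorem \ref{th-lowerenergy} into a single algebraic identity
\[
(1-c)\int_X -\chi(u_1-v)\,\theta_\psi^n=\int_X -\chi(u_1-v)\,(\theta_{u_1}^n-c\theta_{u_2}^n)+(1-c)I_1+cI_2,
\]
with $I_1=\int_X-\chi(u_1-v)(\theta_\psi^n-\theta_{u_1}^n)$ and $I_2=\int_X-\chi(u_1-v)(\theta_{u_2}^n-\theta_{u_1}^n)$, whereas you chain $P_\psi\to P\to P_1$; since $-\chi(u_1-v)$ vanishes outside $\{u_1<u_2\}$ and $\theta_v^n=\theta_{u_2}^n$ there, your $P-P_1$ equals the paper's $I_2$, so the two bookkeepings are the same computation. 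One small remark in your favour: you make explicit that the final ``in particular'' conclusion for $c_\mu=0$ requires passing to an unbounded $\tilde\chi\in\mathcal{W}^-$ (otherwise $h^{\circ n}(\infty)$ is finite and the bound does not give capacity zero), a point the paper leaves implicit.
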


The standard domination principle corresponds to the case where $c=0$ and $\mu:=0$. A non-quantitative version of this domination principle (\emph{i.e,} for $\mu=0$) in the non-K\"ahler setting was obtained in \cite{Guedj-Lu-3}.

\begin{proof}[Proof of Theorem \ref{the domination}]
	Let $w$ be an arbitrary $\omega$-psh function satisfying $0\leq w\leq 1$ and 
	$\psi$ is the unique solution
	to \eqref{eq propfordomination}. Denote $v=\max\{u_1, u_2\}$ and 
	$\chi(t)=\max\{t, -1\}\geq \tilde{\chi}(t)$. By Theorem \ref{th-lowerenergy}
	and Proposition \ref{pro-dominatedcapacitybigomega}, there exists  a constant $C_1>0$ depending only on $n, X$ an 
	$\omega$ such that
	\begin{equation}\label{eq1 proof domination}
		I_1:=\int_X  -\chi (u_1-v)  (\theta_{\psi}^n- \theta_{u_1}^n) \le 
		C_1\varrho (A+B)^2 Q^{\circ (n)}(I^0_\chi(u_1, v)),
	\end{equation}
	and
	\begin{equation}\label{eq2 proof domination}
		I_2:=\int_X  -\chi (u_1-v)  (\theta_{u_2}^n- \theta_{u_1}^n) \le 
		C_1\varrho (A+B)^2 Q^{\circ (n)}(I^0_\chi(u_1, v)).
	\end{equation}
	Moreover,  by the fact $\theta_v^n=\theta_{u_2}^n$ on $\{u_1<u_2\}$ and
	by the assumption $\theta_{u_1}^n\leq c\theta_{u_2}^n+\varrho\mu$ on $\{u_1<u_2\}$
	, we have
	\begin{equation}\label{eq3 proof domination}
		I^0_\chi(u_1, v)
		=\varrho^{-1}\int_{\{u_1<u_2\}} -\chi (u_1-v)(\theta_{u_1}^n-\theta_{u_2}^n)
		\leq\varrho^{-1}\int_{\{u_1<u_2\}} -\chi (u_1-v)(\theta_{u_1}^n-c\theta_{u_2}^n)\leq c_{\mu}.
	\end{equation}
	Combining \eqref{eq1 proof domination}, \eqref{eq2 proof domination} and \eqref{eq3 proof domination}, we get
	\begin{align*}
		(1-c)	\int_X  -\chi (u_1-v)  \theta_{\psi}^n
		&=\int_X -\chi (u_1-v)(\theta_{u_1}^n-c\theta_{u_2}^n)+(1-c)I_1+cI_2\\
		&\leq \int_X -\chi (u_1-v)(\theta_{u_1}^n-c\theta_{u_2}^n)
		+ C_1\varrho (A+B)^2 Q^{\circ n}(c_{\mu})\\
		&\leq \varrho c_{\mu}+ C_1\varrho (A+B)^2 Q^{\circ n}(c_{\mu})\\
		&\leq C\varrho (A+B)^2 Q^{\circ n}(c_{\mu}),
	\end{align*}
	where $C=C_1+1$. Hence
	$$\int_{\{u_1<u_2-\epsilon\}}\omega_{w}^n=\dfrac{\vol (X)}{\varrho}\int_{\{u_1<u_2-\epsilon\}}\theta_{\psi}^n
	\leq \dfrac{C\vol(X) (A+B)^2 Q^{\circ n}(c_{\mu})}{(1-c)\epsilon},$$
	for every $0<\epsilon<1$. Since $w$ is arbitrary, it follows that
	\begin{equation}\label{eq4 proof domination}
		\capK_{\omega}\{u_1<u_2-\epsilon\}\leq 
		\dfrac{C\vol(X) (A+B)^2 Q^{\circ n}(c_{\mu})}{(1-c)\epsilon}.
	\end{equation}
	Moreover, by the definition of $\chi$ and the formula of $Q$, we have
	$$Q(s)=\dfrac{1}{(-\tilde{\chi}(-1/s))^{1/2}}=\dfrac{1}{h(1/s)},$$
	for every $0<s\leq 1$, and $Q(0)=0$. Then
	\begin{equation}\label{eq5 proof domination}
		Q^{\circ n}(s)=\dfrac{1}{h^{\circ n}(1/s)},
	\end{equation}
for every $0\leq s\leq 1$.
	The proof is completed.
\end{proof}

\subsection{Relation to Darvas's metrics on the space of potentials of finite energy} \label{subsec-metricenergy}

Let $\chi \in \mathcal{W}^- \cup \mathcal{W}^+_M$. Let $\theta$ be a closed smooth real $(1,1)$-form in a big cohomology class. When $\theta$ is K\"ahler, it was proved in \cite{Darvas-finite-energy,Darvas-kahlerclass,Darvas-lower-energy} that there is a natural metric $d_\chi$ on $\mathcal{E}_\chi(X, \theta)$ which makes the last space to be a complete metric space. When $\chi(t)= t$, such metrics have a long history and play an important role in the study of complex Monge-Amp\`ere equations. We refer to these last references and  \cite{Berman-Boucksom-Jonsson-KE,Berman-Darvas-Lu} for more details.  
We now draw the connection between $I_\chi(u,v)$ and the metric on $\mathcal{E}_\chi(X,\theta)$. 
Let 
$$\tilde{I}_\chi(u,v)= \int_{\{u<v\}} -\chi(u-v) (\theta_v^n + \theta_u^n)+\int_{\{u>v\}} -\chi(v-u) (\theta_u^n + \theta_v^n) \ge I_\chi(u,v).$$
By \cite{Darvas-finite-energy,Darvas-kahlerclass,Darvas-lower-energy}, there exists a constant $C>0$ such that 
$$C^{-1} \tilde{I}_\chi(u,v) \le d_\chi(u,v) \le C \tilde{I}_\chi(u,v)$$
for every $u,v \in \mathcal{E}_\chi(X,\theta)$ and $\theta$ is K\"ahler. It was proved in \cite{Gupta} (and also \cite{Darvas-finite-energy,DDL-L1metric,Lu-DiNezza-Lpmetric,Trusiani-energy,Xia-energy}) that   $\tilde{I}_\chi(u,v)$ satisfies a quasi-triangle inequality, and  the convergence in  $\tilde{I}_\chi(u,v)$  implies the convergence in capacity by using the plurisubharmonic envelope. Such a method is not quantitative.  We present below quantitative version of this fact by using our approach. 


\begin{theorem}\label{the-capmetricdarvas1}
	Let $\theta\leq A\omega$ be a closed smooth real $(1,1)$-form ($A\geq 1$ is a constant) 
	and $\phi$ be a  model $\theta$-psh function with $\varrho:=\vol(\theta_\phi)>0$.  
		Let $B\geq 1$, $\tilde{\chi} \in \mathcal{W}^-$ and $u_1, u_2\in \mathcal{E}(X, \theta, \phi)$ such that 
		 $\tilde{\chi}(-1)=-1$ and
	$$E^0_{\tilde{\chi}, \theta, \phi}(u_1)+E^0_{\tilde{\chi}, \theta, \phi}(u_2) \le B.$$
	Then there exist  $C>0$ depending only on $n, X$ and $\omega$ such that
	$$d_{\capK}(u_1, u_2)^2\leq  \dfrac{C\,\big(A+|\sup_Xu_1-\sup_Xu_2|\big)\, (A+B)^2}{h^{\circ n}(\varrho/\tilde{I}_{\tilde{\chi}}(u_1, u_2))},$$
	where $h(s)=(-\tilde{\chi}(-s))^{1/2}$ for every $0 \le s \le \infty$.
\end{theorem}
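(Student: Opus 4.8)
\textbf{Proof proposal for Theorem \ref{the-capmetricdarvas1}.}

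The plan is to reduce the statement to Theorem \ref{thequantitativeuniqueness2} (the quantitative uniqueness theorem), which already produces the estimate on $d_{\capK}(u_1,u_2)$ in terms of $\lambda = 1/h^{\circ n}(1/I^0_\chi(u_1,u_2))$ for $\chi(t) = \max\{t,-1\}$. The only gap to bridge is that Theorem \ref{thequantitativeuniqueness2} is phrased with $I^0_\chi$ (for the truncated weight $\chi$), while here the hypothesis is controlled through $\tilde I_{\tilde\chi}(u_1,u_2)$ (the symmetrized $\tilde\chi$-energy functional). So the first step is to compare these two quantities.

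First I would observe that $\chi = \max\{t,-1\}$ satisfies $\chi \ge \tilde\chi$ on $(-\infty,-1]$ and $\chi(-1) = \tilde\chi(-1) = -1$, so the relaxed hypothesis in Remark \ref{re-nhohontru1} applies and Theorem \ref{thequantitativeuniqueness2} can be invoked with this $\chi$ and the given $\tilde\chi$. Next, from the definitions one has $I_\chi(u_1,u_2) \le \tilde I_\chi(u_1,u_2)$, and since $\chi(t) = \max\{t,-1\} \le \tilde\chi(t)$ fails in general but $-\chi(t) \le -\tilde\chi(t)$ holds exactly on $(-\infty,-1]$ while $-\chi(t) = -t \le$ (a linear bound) on $[-1,0]$, one gets a bound of the shape $\tilde I_\chi(u_1,u_2) \le \tilde I_{\tilde\chi}(u_1,u_2) + 2\varrho$ (the extra $2\varrho$ absorbing the discrepancy on $[-1,0]$, using $\mu_j(X) = \varrho$; the constant $2$ is not optimal and does not matter). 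Hence $I^0_\chi(u_1,u_2) = \varrho^{-1} I_\chi(u_1,u_2) \lesssim \varrho^{-1}\tilde I_{\tilde\chi}(u_1,u_2) + 2$. Then, since $h^{\circ n}$ is increasing and $1/h^{\circ n}(1/s)$ is increasing in $s$, this translates into $\lambda = 1/h^{\circ n}(1/I^0_\chi(u_1,u_2)) \lesssim 1/h^{\circ n}(\varrho/\tilde I_{\tilde\chi}(u_1,u_2))$, up to adjusting the argument by a bounded multiplicative factor and using that $h^{\circ n}$ is sub-multiplicative-ish under such rescaling (one can always enlarge the constant $C$ depending on $n,X,\omega$ to swallow a bounded shift of the argument; alternatively, treat the regime $\tilde I_{\tilde\chi}(u_1,u_2) \ge \varrho$ separately by the trivial Chern--Levine--Nirenberg bound exactly as in Case 2 of the proof of Theorem \ref{thequantitativeuniqueness2}).

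With this comparison in hand, I would apply Theorem \ref{thequantitativeuniqueness2} with, say, $\gamma = 1/2$ (any fixed $\gamma \in (0,1)$ works), giving
$$
d_{\capK}(u_1,u_2)^2 \le C\,(A + |a_1-a_2|)\big(|a_1-a_2| + A(A+B)^2\lambda^{1/2}\big),
$$
where $a_j = \sup_X u_j$. Since $A, B \ge 1$ and $\lambda \le 1$ in the main regime (so $\lambda^{1/2} \ge \lambda$... actually one wants the reverse: enlarge by replacing $\lambda^{1/2}$ with a power that still tends to $0$ — but the cleanest route is to note $|a_1 - a_2| \le \|u_1\|_{L^1} + \|u_2\|_{L^1} + \text{const} \lesssim A$ by compactness of normalized $\omega$-psh functions, hence $|a_1-a_2| \lesssim A \le A(A+B)^2$ and also $|a_1-a_2| \le A(A+B)^2 \cdot 1$), one can bound both summands by $A(A+B)^2$ times a quantity comparable to $\lambda^{1/2}$, and then further by $\lambda$ after adjusting the exponent inside $h^{\circ n}$ — but in fact the statement of the theorem already has $1/h^{\circ n}(\varrho/\tilde I_{\tilde\chi})$ with no extra power, so I would instead track through Theorem \ref{th-lowerenergy-kocochuanhoa} directly: take $m$ large and $\gamma$ close to $1$ there so that the output power of $\lambda$ is essentially $1$, at the cost of a constant depending on $m, \gamma$ (absorbed into $C(n,X,\omega)$ after fixing $m,\gamma$ as absolute constants). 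The main obstacle, and the only genuinely delicate point, is precisely this bookkeeping: matching the powers of $\lambda$ and the precise form of the $h^{\circ n}$-argument between the hypothesis $\tilde I_{\tilde\chi}$ and the conclusion, while keeping the constant dependent only on $n, X, \omega$. Everything else is a direct citation of Theorem \ref{thequantitativeuniqueness2} (equivalently Theorems \ref{th-lowerenergy}, \ref{th-lowerenergy-kocochuanhoa} and Proposition \ref{pro-dominatedcapacitybigomega}) together with the elementary inequality $I_\chi \le \tilde I_\chi \le \tilde I_{\tilde\chi} + 2\varrho$ and monotonicity of $h^{\circ n}$.
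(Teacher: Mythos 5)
Your reduction to Theorem \ref{thequantitativeuniqueness2} does not close, and the point you flag as ``the only genuinely delicate point'' is a real gap rather than bookkeeping. Theorem \ref{thequantitativeuniqueness2} (equivalently, the underlying Theorem \ref{th-lowerenergy-kocochuanhoa}) produces a bound proportional to $\lambda^{\gamma}$ with $\gamma<1$ strictly, plus an additive term $|a_1-a_2|$. Taking $\gamma$ close to $1$ does not help: the constant in Theorem \ref{th-lowerenergy-kocochuanhoa} blows up as $\gamma\to 1$ (it arises from absorbing $\big(\log(Ae/\lambda^m)\big)^{2M}$ into $A^{(1-\gamma)/m}\lambda^{\gamma-1}$), so for every admissible $\gamma$ you get an output of the form $(\lambda')^{\gamma}$, which is strictly larger than $\lambda'$ in the regime $\lambda'<1$; this is weaker than the stated inequality and cannot be repaired by enlarging $C$. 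The additive $|a_1-a_2|$ term is likewise not absorbed: your estimate $|a_1-a_2|\lesssim A$ is a constant that does not tend to $0$ as $\tilde I_{\tilde\chi}(u_1,u_2)\to 0$, while the right-hand side of the target does.

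The paper's actual proof sidesteps both problems by not invoking Theorem \ref{th-lowerenergy-kocochuanhoa} at all. After the Cauchy--Schwarz reduction to bounding $\int_X-\chi(-|u_1-u_2|)\theta_\psi^n$ (your step via \eqref{eq2thedcap} is correct and is the same as in the paper), it applies Theorem \ref{th-lowerenergy} \emph{directly} to the ordered pair $u_1\le v:=\max\{u_1,u_2\}$ with $\psi_1:=\psi$ (the auxiliary Monge--Amp\`ere solution) and $\psi_2:=u_1$. This gives $\int_X-\chi(u_1-v)(\theta_\psi^n-\theta_{u_1}^n)\lesssim\varrho(A+B)^2\,Q^{\circ n}(I^0_\chi(u_1,v))$, and the decisive observation you are missing is that the correction term $\int_X-\chi(u_1-v)\theta_{u_1}^n$ needed to pass from $\theta_\psi^n-\theta_{u_1}^n$ to $\theta_\psi^n$ is \emph{itself} bounded by $\tilde I_\chi(u_1,u_2)\le\tilde I_{\tilde\chi}(u_1,u_2)$, i.e.\ by the quantity already appearing on the right-hand side. (The required energy bound $E^0_{\tilde\chi,\theta,\phi}(\psi)\lesssim A$ follows from Proposition \ref{pro-dominatedcapacitybigomega} together with the elementary fact that $-\tilde\chi(t)\le 1+|t|$ for $\tilde\chi\in\mathcal{W}^-$ normalized by $\tilde\chi(-1)=-1$, since convexity forces $\tilde\chi'\le 1$ on $(-\infty,-1]$.) This produces exactly $Q^{\circ n}=1/h^{\circ n}$ with no power loss and no spurious $|a_1-a_2|$ summand. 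Your observations that $I^0_\chi\le\varrho^{-1}\tilde I_{\tilde\chi}$, that $h^{\circ n}$ is increasing, and that the regime $\tilde I_{\tilde\chi}\gtrsim\varrho$ can be handled separately by Chern--Levine--Nirenberg, are all correct and are used, but the detour through Theorem \ref{thequantitativeuniqueness2} cannot yield the claimed estimate.
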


We note that the quantities $a_j:= |\sup_Xu_j|$ for $j=1,2$ (hence $|a_1-a_2|$) can be bounded by a function of $B$ and $\tilde{\chi}$ as follows. Since $\phi$ is a model, we have $-a_j= \sup_X (u_j- \phi)$. It follows that 
  $$B \ge E^0_{\tilde{\chi}, \theta, \phi}(u_j) \ge -\tilde{\chi}(-a_j).$$
Consequently, we get $a_j \le -\tilde{\chi}^{-1}(-B)$  for $j=1,2$, where $\tilde{\chi}^{-1}$ denotes the inverse map of $\tilde{\chi}: \R_{\le 0} \to \R_{\le 0}$.
Thus by Theorem \ref{the-capmetricdarvas1}, one sees that if $\tilde{I}_{\tilde{\chi}}(u_1, u_2)$ is small, then so is $d_\capK(u_1,u_2)$ (uniformly in $u_1,u_2 \in \mathcal{E}(X, \theta,\phi)$ of $\tilde{\chi}$-energy  bounded by a fixed constant). 

\begin{proof}
	Let $\chi(t)=\max\{t, -1\}$.
	Suppose that $w$ is an arbitrary $\omega$-psh function satisfying $0\leq w\leq 1$. By the proof of Theorem \ref{thequantitativeuniqueness2} (see \eqref{eq2thedcap}), there exists $C_1>0$ depending only on $X$ and $\omega$ such that
\begin{equation}\label{eq1darvas1}
\left(\int_X|u_1-u_2|^{1/2}(\ddc w+\omega)^n\right)^2\leq C_1\big(A+|\sup_Xu_1-\sup_Xu_2|\big) \varrho^{-1}\int_X  -\chi\big(-|u_1- u_2|\big) \theta_{\psi}^n,
\end{equation}
where $\psi$ is defined by \eqref{eqpsithedinewunique}. Moreover, it follows from Theorem \ref{th-lowerenergy} 
(applied to $u_1, \max\{u_1,u_2\},$ $\psi_1:=\psi, \psi_2:=u_1$)
and Proposition \ref{pro-dominatedcapacitybigomega} that
$$\int_X  -\chi\big(-|u_1- u_2|\big) \theta_{\psi}^n\leq  \tilde{I}_{\chi}(u_1, u_2)
+C_2\varrho (A+B)^2Q_{\chi, \tilde{\chi}}^{\circ (n)}(I^0_\chi(u_1,u_2)),$$
where $C_2>0$ depends only on $n$. Therefore, by the facts 	$Q^{\circ(n)}(s)=\dfrac{1}{h^{\circ(n)}(1/s)}$ and 
$I_\chi(u_1,u_2)\leq \tilde{I}_{\chi}(u_1, u_2)\leq \tilde{I}_{\tilde{\chi}}(u_1, u_2))$, we obtain
\begin{equation}\label{eq2darvas1}
\int_X  -\chi\big(-|u_1- u_2|\big) \theta_{\psi}^n\leq
\dfrac{C_3\varrho (A+B)^2}{h^{\circ(n)}(\varrho/\tilde{I}_{\tilde{\chi}}(u_1, u_2))},
\end{equation}
	where $C_3>0$ depends only on $n, X$ and $\omega$.	Combining \eqref{eq1darvas1} and \eqref{eq2darvas1}, we get 
	$$\left(\int_X|u_1-u_2|^{1/2}(\ddc w+ \omega)^n\right)^2\leq \dfrac{C\, \big(A+|\sup_Xu_1-\sup_Xu_2|\big)\, (A+B)^2}{h^{\circ(n)}(\varrho/\tilde{I}_{\tilde{\chi}}(u_1, u_2))},$$
	where $C>0$ depends only on $n, X$ and $\omega$. Since $w$ is arbitrary, we get the desired inequality.
		The proof is completed.
\end{proof}

\begin{remark}\label{rebosungWcongM} Consider now a weight $\tilde{\chi} \in \mathcal{W}^+_M$ with $\tilde{\chi}(-1)=-1$. One sees that $\tilde{\chi}(t) \le (-t)^{M} \tilde{\chi}(-1)= -(-t)^M$ for $-1 \le t \le 0$, and  $\tilde{\chi}(t) \le \tilde{\chi}_0(t):=t$ for $t \le -1$. Consequently, using H\"older's inequality, we obtain 
$$\rho^{-1}\tilde{I}_{\tilde{\chi}_0}(u_1,u_2) \le  2\big(\rho^{-1}\tilde{I}_{\tilde{\chi}}(u_1,u_2)\big)^{1/M}+ \rho^{-1}\tilde{I}_{\tilde{\chi}}(u_1,u_2).$$
Hence,  Theorem \ref{the-capmetricdarvas1} applied to $\tilde{\chi}_0$ shows that if $\rho^{-1}\tilde{I}_{\tilde{\chi}}(u_1,u_2) \to 0$ and the normalized $\tilde{\chi}$-energies of $u_1,u_2$ are uniformly bounded, then $d_{\capK}(u_1,u_2) \to 0$. 
\end{remark}

When $\tilde{\chi} \in \mathcal{W}^+_M$, we have another version of Theorem \ref{the-capmetricdarvas1} which is more explicit.

\begin{theorem}\label{the-capmetricdarvas2}
	Let $\theta\leq A\omega$ be a closed smooth real $(1,1)$-form ($A\geq 1$) and $\phi$ be a  model $\theta$-psh function such that $\varrho:=\vol(\theta_\phi)>0$.  Let $B\ge 1$,  $\tilde{\chi} \in \mathcal{W}_M^+$ ($M\geq 1$)  and $u_1, u_2\in \mathcal{E}(X, \theta, \phi)$ such that $\tilde{\chi}(-1)=-1$ and
	$$E^0_{\tilde{\chi}, \theta, \phi}(u_1)+E^0_{\tilde{\chi}, \theta, \phi}(u_2) \le B.$$
	Then there exists  $C>0$ depending only on $n$ and $M$ such that
	\begin{align} \label{ine-chingaWcongMrevise}
	\int_X-\tilde{\chi}(-|u_1-u_2|)\theta_{\psi}^n
		\leq C\varrho B^2\left(\tilde{I}_{\tilde{\chi}}(u_1, u_2)/\varrho\right)^{2^{-n}},
	\end{align}
for every $\psi\in\PSH (X, \theta)$ with  $\phi-1\leq \psi\leq\phi$. Moreover, if $\sup_X u_1=\sup_X u_2$ then 
there exists $C'>0$ depending on $n, X, \omega, A$ and $M$ such that
	\begin{align*}
	\tilde{I}_{\tilde{\chi}}(u_1, u_2)
	\le 
	C'\varrho A^{1/2}B^2\left(I^0_{\tilde{\chi}}(u_1, u_2)\right)^{2^{-n-1}}.
\end{align*}
\end{theorem}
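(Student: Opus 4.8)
The plan is to follow the same strategy as in the proof of Theorem \ref{the-capmetricdarvas1}, but exploit that when $\tilde{\chi}\in\mathcal{W}^+_M$ we can take $\chi=\tilde{\chi}$ in our main estimate, so that no outer ``domination'' weight is needed and the estimate becomes fully explicit. First I would record the key property of $Q=Q_{\tilde{\chi},\tilde{\chi}}$: when $\chi=\tilde{\chi}\in\mathcal{W}^+_M$ we simply have $Q(t)=t^{1/2}$ for $0<t<1$ and $Q(t)=1$ for $t\ge 1$, hence $Q^{\circ n}(t)=t^{2^{-n}}$ for $0\le t\le 1$ and $Q^{\circ n}(t)=1$ for $t\ge 1$; in all cases $Q^{\circ n}(t)\le t^{2^{-n}}+1$, and when combined with the normalization one gets $Q^{\circ n}\big(I^0_{\tilde\chi}(u_1,u_2)\big)\lesssim \big(\tilde I_{\tilde\chi}(u_1,u_2)/\varrho\big)^{2^{-n}}$ after absorbing the trivial range into the constant $B^2$ (recall $I_{\tilde\chi}\le\tilde I_{\tilde\chi}$ and, when the left-hand side of \eqref{ine-chingaWcongMrevise} is compared against $B^2\varrho$, the bound is automatic once $\tilde I_{\tilde\chi}/\varrho\ge 1$ by Lemma \ref{le-sosanhnangnluongintegrabig}).

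For the first inequality \eqref{ine-chingaWcongMrevise}, I would apply Theorem \ref{th-lowerenergy} with the weight $\chi=\tilde{\chi}$ (so $\tilde\chi\le\chi$ is an equality), to the triple $u_1\le v:=\max\{u_1,u_2\}$ together with $\psi_1:=\psi$, $\psi_2:=u_1$. Here the hypothesis $\phi-1\le\psi\le\phi$ guarantees $E^0_{\tilde\chi,\theta,\phi}(\psi)\le -\tilde\chi(-1)=1\le B$, so the energy bound of Theorem \ref{th-lowerenergy} is met with $B$ replaced by a dimensional multiple of $B$. This yields
\[
\int_X -\tilde\chi(u_1-v)\,\theta_\psi^n \le \int_X -\tilde\chi(u_1-v)\,\theta_{u_1}^n + C_n\varrho B^2\,Q^{\circ n}\big(I^0_{\tilde\chi}(u_1,v)\big),
\]
and the first term on the right is at most $\tilde I_{\tilde\chi}(u_1,u_2)$ (it is part of the sum defining $\tilde I_{\tilde\chi}$), while $I_{\tilde\chi}(u_1,v)\le I_{\tilde\chi}(u_1,u_2)\le\tilde I_{\tilde\chi}(u_1,u_2)$. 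Repeating the symmetric argument with $u_2$ in place of $u_1$ and adding, together with the elementary inequality $-\tilde\chi(-|u_1-u_2|)\lesssim -\tilde\chi(u_1-v)-\tilde\chi(u_2-v)$ (valid since $\tilde\chi$ is increasing and $|u_1-u_2|=|u_1-v|+|u_2-v|$, plus concavity of $\tilde\chi$ controlling the subadditivity defect by a factor $2^M$), gives \eqref{ine-chingaWcongMrevise} after inserting the bound on $Q^{\circ n}$ above.

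For the second (conditional) inequality, under $\sup_X u_1=\sup_X u_2$ I would instead invoke the refined estimate \eqref{ine2-maintheorem1short} of Theorem \ref{th-lowerenergy-phanintro} (equivalently Theorem \ref{th-lowerenergy-kocochuanhoa}) applied with $\chi=\tilde\chi$, to produce an estimate for $\int_X -\tilde\chi(-|u_1-u_2|)(\theta_{\psi_1}^n+\theta_{\psi_2}^n)$ in terms of $g\big(I^0_{\tilde\chi}(u_1,u_2)\big)$; choosing $\psi_1=\psi_2$ to be the solution $\psi$ of the auxiliary Monge--Amp\`ere equation \eqref{eqpsithedinewunique} driven by $\omega_w^n$ and then taking the supremum over $0\le w\le 1$ as in the proof of Theorem \ref{the-capmetricdarvas1}, I would bound $\tilde I_{\tilde\chi}(u_1,u_2)$ itself. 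Here the cruder exponent $2^{-n-1}$ (rather than $2^{-n}$) and the extra factor $A^{1/2}$ arise from: (i) the Cauchy--Schwarz step \eqref{eq2thedcap} used to pass from $\theta_\psi^n$-integrals back to $(\ddc w+\omega)^n$-integrals, which halves the exponent once more; and (ii) the use of Theorem \ref{the P[u]-C<u} / Proposition \ref{pro-dominatedcapacitybigomega}, whose output is linear in $A$, entering under a square root. The main obstacle I anticipate is bookkeeping the constants so that the stated dependence (only on $n,M$ in the first estimate; on $n,X,\omega,A,M$ in the second) is respected — in particular checking that the auxiliary weight can genuinely be taken equal to $\tilde\chi$ so that $Q^{\circ n}$ is the explicit power $t\mapsto t^{2^{-n}}$, and that the energy of $\psi$ is harmless because $\phi-1\le\psi\le\phi$ forces $E^0_{\tilde\chi,\theta,\phi}(\psi)\le 1$.
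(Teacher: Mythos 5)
Your treatment of the first inequality \eqref{ine-chingaWcongMrevise} is essentially the paper's proof: apply Theorem~\ref{th-lowerenergy} with $\chi=\tilde\chi$ to the pair $u_1\le v=\max\{u_1,u_2\}$, $\psi_1=\psi$, $\psi_2=u_1$ (and symmetrically for $u_2$), note $E^0_{\tilde\chi,\theta,\phi}(\psi)\le 1$, observe $Q^{\circ n}(t)=t^{2^{-n}}$ on $[0,1]$, and absorb the trivial range $\tilde I_{\tilde\chi}/\varrho\ge 1$ into the constant. One minor overcomplication: you invoke a $2^{M}$-subadditivity bound to get $-\tilde\chi(-|u_1-u_2|)\lesssim -\tilde\chi(u_1-v)-\tilde\chi(u_2-v)$, but this is actually an \emph{identity} (with no constant loss and no use of $M$), since at every point at most one of $u_1-v$, $u_2-v$ is nonzero and $\tilde\chi(0)=0$. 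The paper also uses the sharper observation $I^0_{\tilde\chi}(u_1,v)+I^0_{\tilde\chi}(u_2,v)=I^0_{\tilde\chi}(u_1,u_2)$, which you only record as two separate ``$\le$'' bounds; either suffices.

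For the second (conditional) inequality your plan has a genuine gap. You propose to apply Theorem~\ref{th-lowerenergy-kocochuanhoa} with $\psi$ equal to the auxiliary Monge--Amp\`ere potential solving \eqref{eqpsithedinewunique}, and then take the supremum over $\omega$-psh $w$ with $0\le w\le 1$, ``as in the proof of Theorem~\ref{the-capmetricdarvas1}''. But that supremum controls a \emph{capacity-type} quantity $\sup_w\int_X -\tilde\chi(-|u_1-u_2|)\,\omega_w^n$, whereas what you must bound is
\[
\tilde I_{\tilde\chi}(u_1,u_2)=\int_X -\tilde\chi(-|u_1-u_2|)\,(\theta_{u_1}^n+\theta_{u_2}^n),
\]
which involves $\theta_{u_1}^n$ and $\theta_{u_2}^n$ themselves. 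There is in general no uniform domination of $\theta_{u_j}^n$ by $\capK_\omega$ for finite-energy $u_j$, so the capacity bound does not return $\tilde I_{\tilde\chi}$. The correct route, which the paper follows, is to apply Theorem~\ref{th-lowerenergy-kocochuanhoa} \emph{directly} with $\psi=u_1$ and then with $\psi=u_2$ (both satisfy the energy hypothesis), take $m=1$, $\gamma=1/2$, use $\sup_X u_1=\sup_X u_2$ to kill the $|a_1-a_2|$ term, and then use $-\tilde\chi(-s)\le s$ for $0\le s\le 1$ (from concavity and $\tilde\chi(-1)=-1$) on the term $-2\varrho\,\tilde\chi(-\lambda)$. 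This is also where the exponent $2^{-n-1}$ and the factor $A^{1/2}$ actually come from — $\lambda^{\gamma}=\lambda^{1/2}=(I^0_{\tilde\chi})^{2^{-n-1}}$ and $A^{(1-\gamma)/m}=A^{1/2}$ in Theorem~\ref{th-lowerenergy-kocochuanhoa} — not from a Cauchy--Schwarz passage to $\omega_w^n$-integrals, which plays no role in this part.
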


\begin{proof}
The case $I^0_{\tilde{\chi}}(u_1, u_2)\geq 1$ is trivial because we have 
$$\tilde{I}_{\tilde{\chi}}(u_1, u_2)/\varrho \ge I^0_{\tilde{\chi}}(u_1, u_2)\geq 1,$$
whereas the left-hand side of (\ref{ine-chingaWcongMrevise}) is always bounded by a constant (depending on $M$) times $B$. Thus, from now on, it suffices to assume that 
	$I^0_{\tilde{\chi}}(u_1, u_2)<1$.
	
Denote $v=\max\{u_1, u_2\}$.
By Lemma \ref{le-sosanhnangnluongintegrabig}, we have 
$v\in \mathcal{E}(X, \theta, \phi)$ 
and $E^0_{\tilde{\chi}, \theta, \phi}(v)\leq C_1B,$ where $C_1>0$ depends only on $n$ and $M$. 
Taking $\chi=\tilde{\chi}$ and using Theorem \ref{th-lowerenergy}, we get
\begin{equation}\label{eq1 th2darvas}
		\int_X-\tilde{\chi}(u_j-v)\theta_{\psi}^n\leq \int_X-\tilde{\chi}(u_j-v)\theta_{u_j}^n
		+C_2\varrho B^2 \big(I^0_{\tilde{\chi}}(u_j, v)\big)^{2^{-n}},
\end{equation} 
for $j=1,2$, where $C_2>0$ depends on $n$ and $M$. Note that
$$\int_X-\tilde{\chi}(u_1-v)\theta_{u_1}^n+\int_X-\tilde{\chi}(u_2-v)\theta_{u_2}^n
\leq\int_X-\tilde{\chi}(-|u_1-u_2|)(\theta_{u_1}^n+\theta_{u_2}^n)=\tilde{I}_{\tilde{\chi}}(u_1, u_2),$$
and
$$I^0_{\tilde{\chi}}(u_1, v)+I^0_{\tilde{\chi}}(u_2, v)=I^0_{\tilde{\chi}}(u_1, u_2)\leq\varrho^{-1}\tilde{I}_{\tilde{\chi}}(u_1, u_2).$$
Hence, by \eqref{eq1 th2darvas}, we get
\begin{align*}
	\int_X-\tilde{\chi}(-|u_1-u_2|)\theta_{\psi}^n
	&=\int_X-\tilde{\chi}(u_1-v)\theta_{\psi}^n+\int_X-\tilde{\chi}(u_2-v)\theta_{\psi}^n\\
	&\leq \int_X-\tilde{\chi}(u_1-v)\theta_{u_1}^n+\int_X-\tilde{\chi}(u_2-v)\theta_{u_2}^n\\
	&+C_2\varrho B^2 \left(\big(I^0_{\tilde{\chi}}(u_1, v)\big)^{2^{-n}}+
	\big(I^0_{\tilde{\chi}}(u_2, v)\big)^{2^{-n}}\right)\\
	&\leq \tilde{I}_{\tilde{\chi}}(u_1, u_2)
	+2C_2\varrho B^2 \big(\tilde{I}_{\tilde{\chi}}(u_1, u_2)/\varrho\big)^{2^{-n}}\\
	&\leq C_3\varrho B^2 \big(\tilde{I}_{\tilde{\chi}}(u_1, u_2)/\varrho\big)^{2^{-n}},
\end{align*}
where $C_3>0$ depends on $n$ and $M$. Here, the last estimate holds due to the fact
$\tilde{I}_{\tilde{\chi}}(u_1, u_2)\leq \varrho B$.

Now, we consider the case $\sup_Xu_1=\sup_Xu_2$. By Theorem \ref{th-lowerenergy-kocochuanhoa}
(choose $m=1$ and $\gamma=1/2$),
there exists $C_4>0$ depending only on $n, X, \omega$ and $M$ such that
\begin{align}\label{eq2 th2darvas}
\tilde{I}_{\tilde{\chi}}(u_1, u_2) &\leq	\int_X-\tilde{\chi}(-|u_1-u_2|)(\theta_{u_1}^n+\theta_{u_2}^n)\\
\nonumber
&\leq
	-2\varrho\,\tilde{\chi}\left(-\big(I^0_{\tilde{\chi}}(u_1, u_2)\big)^{2^{-n}}\right)
	+C_4\varrho A^{1/2} B^2 \big(I^0_{\tilde{\chi}}(u_1, u_2)\big)^{2^{-n-1}}.
\end{align}
Moreover, since $\tilde{\chi}$ is concave, we have 
$$\dfrac{\tilde{\chi}(t)}{t}\leq \dfrac{\tilde{\chi}(-1)}{-1}=1,$$
for every $-1<t<0$. Hence, by \eqref{eq2 th2darvas}, we have
\begin{align*}
	\tilde{I}_{\tilde{\chi}}(u_1, u_2)
	&\leq 2\varrho\,\big(I^0_{\tilde{\chi}}(u_1, u_2)\big)^{2^{-n}}
	+C_4\varrho A^{1/2}B^2 \big(I^0_{\tilde{\chi}}(u_1, u_2)\big)^{2^{-n-1}}\\
	&\leq (2+C_4)\varrho A^{1/2}B^2 \big(I^0_{\tilde{\chi}}(u_1, u_2)\big)^{2^{-n-1}}.
\end{align*}
The proof is completed.
\end{proof}

\subsection{Comparison of capacities}
 
For every Borel subset $E$ in $X$ and for every $\varphi \in \PSH(X,\theta)$, we recall again that 
$$\capK_{\theta,\varphi}(E)= \sup\big\{\int_E \theta_\psi^n: \, \psi \in \PSH(X, \theta), \quad \varphi- 1 \le \psi \le \varphi\big\}.$$
In \cite{Lu-comparison-capacity}, it was showed that if $\varphi_j$ ($j=1, 2$) is a  $\theta_j$-psh function with
 $\int_X(\theta_j+dd^c\varphi_j)^n>0$ then there exists a continuous function $f:\R_{ \ge 0}\rightarrow\R_{ \ge 0}$
 with $f(0)=0$ such that $\capK_{\theta_1,\varphi_1}(E)\leq f(\capK_{\theta_2,\varphi_2}(E))$ for every
 Borel set $E\subset X$.  As an application of our main results,  we obtain the following quantitative comparison of capacities  for the case where $\varphi_j$ is a model $\theta_j$-psh function.

\begin{theorem}\label{the comparisoncap} (Comparison of capacities)
Assume that $\theta_1, \theta_2\leq A\omega$ are closed smooth real $(1,1)$-forms representing big cohomology classes	and, for $j=1,2$,  $\phi_j$ is a model $\theta_j$-psh function satisfying 
	$\int_X(dd^c\phi_j+\theta_j)^n=\varrho_j>0$. Then, for every $0<\gamma<1$, there exists $C>0$
	depending only on $n, X, \omega, A$ and $\gamma$ such that
	$$\frac{\capK_{\theta_1, \phi_1}(E)}{\varrho_1}\leq 
C	\bigg(\frac{\capK_{\theta_2, \phi_2}(E)}{\varrho_2}\bigg)^{2^{-n}\gamma},$$
	for every Borel set $E\subset X$.
\end{theorem}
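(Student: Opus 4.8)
The plan is to reduce the comparison of capacities to the quantitative uniqueness/stability estimate of Theorem \ref{thequantitativeuniqueness2} (or rather its proof via Theorem \ref{th-lowerenergy-kocochuanhoa} and Proposition \ref{pro-dominatedcapacitybigomega}) applied on a common ambient class. Fix a Borel set $E\subset X$. First I would recall the classical trick: given $E$, the capacity $\capK_{\theta_2,\phi_2}(E)$ is (up to normalization) realized through the relative extremal function $h_{E,\theta_2,\phi_2}:=\big(\sup\{\psi\in\PSH(X,\theta_2):\psi\le\phi_2,\ \psi\le\phi_2-1\text{ on }E\}\big)^*$, and one has the standard identities relating $\int_X\theta_{h_E}^n$, the measure $\theta_{h_E}^n$ supported essentially where $h_E=\phi_2-1$ (on $E$ up to a pluripolar set), and $\capK_{\theta_2,\phi_2}(E)$. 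So the first step is to set up, for each $j$, the extremal function $h_j$ relative to $(\theta_j,\phi_j)$ for the set $E$, and record that $\capK_{\theta_j,\phi_j}(E)$ is comparable to $\int_X\theta_{h_j}^n$ and that $\phi_j-1\le h_j\le\phi_j$ with $h_j\in\mathcal E(X,\theta_j,\phi_j)$ when $\capK_{\theta_j,\phi_j}(E)>0$ (the case $=0$ being trivial since then $h_j=\phi_j$ and one shows $\capK_{\theta_1,\phi_1}(E)=0$ too by the non-quantitative comparison of \cite{Lu-comparison-capacity}).

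The key step is to transfer the data from class $\alpha_2=\{\theta_2\}$ to class $\alpha_1=\{\theta_1\}$. Since $\theta_1,\theta_2\le A\omega$, both classes are dominated by the fixed big class $\{A\omega\}$ (after possibly enlarging $A$ to make $A\omega$ K\"ahler with $[\alpha_j]\le[A\omega]$), and I would work with $\omega$-plurisubharmonic representatives: write $\phi_j$ and $h_j$ as $A\omega$-psh functions and use Proposition \ref{pro-dominatedcapacitybigomega} and Theorem \ref{th-lowerenergy-kocochuanhoa} with the weight $\chi(t)=\max\{t,-1\}$ together with a fixed $\tilde\chi\in\widetilde{\mathcal W}^-$, $\tilde\chi(-1)=-1$, chosen so that the $\tilde\chi$-energies of the finitely many potentials involved are bounded by some $B$ depending only on $n,X,\omega,A$ (this is possible because all these potentials are normalized $\le 0$ and have full mass in classes dominated by $A\omega$, so a single slowly-growing weight works). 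Then: $d_{\capK}(h_2,\phi_2)^2$ controls $\int_X|h_2-\phi_2|^{1/2}\omega_w^n$ over all $0\le w\le 1$, and since $h_2-\phi_2=-1$ on $E$ (up to a pluripolar set of zero capacity mass) we get $\capK_\omega(E)\lesssim d_{\capK}(h_2,\phi_2)^2$. On the other side, the comparison principle plus the mass identity gives $\capK_{\theta_1,\phi_1}(E)/\varrho_1\lesssim$ a power of $\capK_\omega(E)$ via Proposition \ref{pro-dominatedcapacitybigomega}-type estimates (pushing the $\omega$-capacity bound into the $\theta_1$-relative capacity). Chaining these with the exponent $2^{-n}\gamma$ coming from the iterated function $Q^{\circ n}$ in Theorem \ref{th-lowerenergy-kocochuanhoa} (recall $Q(s)=1/h(1/s)$ with $h(s)=(-\tilde\chi(-s))^{1/2}$, and for the explicit choice $\tilde\chi(t)=-(-t)^{2^{-n}\gamma\cdot(\text{something})}$ one makes $Q^{\circ n}$ behave like $s\mapsto s^{2^{-n}\gamma}$) yields the claimed inequality.

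Concretely, the cleanest route: apply Theorem \ref{thequantitativeuniqueness2} (via Remark \ref{rmkAB}) to the pair $u_1:=h_1$, $u_2:=\phi_1$ in class $\theta_1$ — no wait, that is not quite a comparison of the two classes. The actual mechanism is: one produces a potential $\psi_1\in\mathcal E(X,\theta_1,\phi_1)$ whose Monge–Amp\`ere measure is (a normalized multiple of) $\mathbf 1_E\,\omega^n$-type mass, estimates $\int_X|\psi_1-\phi_1|\theta_{\psi_1}^n\lesssim A\varrho_1$ by Proposition \ref{pro-dominatedcapacitybigomega}, and then runs the domination/stability argument of Theorem \ref{the domination} with $u_1=h_1$, $u_2=\phi_1$ and $\mu$ built from $\theta_{h_2}^n$ transported to class $\theta_1$; the hypothesis $\theta_{u_1}^n\le c\theta_{u_2}^n+\varrho_1\mu$ on $\{u_1<u_2\}$ holds because $\theta_{h_1}^n$ lives on $E$ up to pluripolar sets, with $c_\mu\lesssim \capK_{\theta_2,\phi_2}(E)/\varrho_2$. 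Theorem \ref{the domination} then gives $\capK_\omega\{h_1<\phi_1-\epsilon\}\lesssim (A+B)^2\vol(X)/\big(\epsilon\, h^{\circ n}(\varrho_2/\capK_{\theta_2,\phi_2}(E))\big)$, and taking $\epsilon\to\frac12$ (since $h_1=\phi_1-1$ on $E$) converts the left side into a lower bound for $\capK_\omega(E)$, hence for $\capK_{\theta_1,\phi_1}(E)/\varrho_1$ after one more application of Chern–Levine–Nirenberg.

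I expect the main obstacle to be the bookkeeping in \emph{transporting measures between the two cohomology classes} $\alpha_1$ and $\alpha_2$: the stability theorems are stated within a fixed class, so one must carefully phrase everything in terms of the common dominating K\"ahler form $A\omega$, verify that the relative extremal functions have the right full-mass and singularity-type properties (using that $\phi_j$ is a \emph{model} potential so that $\capK_{\theta_j,\phi_j}$ behaves well and $P_{\theta_j}[\phi_j]=\phi_j$), and correctly identify the constant $B$ as depending only on $n,X,\omega,A$ — which forces the choice of $\tilde\chi$ to be made uniformly over all the auxiliary potentials. A secondary technical point is checking that the exponent coming out of $Q^{\circ n}$ can be taken to be exactly $2^{-n}\gamma$ for arbitrary $\gamma<1$, which amounts to choosing the power-type weight $\tilde\chi$ appropriately and tracking how $Q(s)=1/h(1/s)$ iterates; this is elementary once the weight is fixed but needs the monotonicity fact $Q(t_1)\le(t_1/t_2)^{1/2}Q(t_2)$ from Lemma \ref{lem qt1t2} to absorb the multiplicative constants picked up at each of the $n$ iteration steps.
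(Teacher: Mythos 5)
Your outline correctly locates the main ingredients (relative extremal function, Theorem~\ref{th-lowerenergy-kocochuanhoa}, the need to connect the two classes, and the origin of the $2^{-n}$ in the iterate $Q^{\circ n}$), but the crucial \emph{transport} step is left undone, and neither of the two routes you sketch actually closes it. Going through $\capK_\omega(E)$ as an intermediary is circular on the return leg: the inequality ``$\capK_{\theta_1,\phi_1}(E)/\varrho_1 \lesssim$ a power of $\capK_\omega(E)$'' is itself a comparison of two relative capacities of exactly the type being proved, and Proposition~\ref{pro-dominatedcapacitybigomega} does not furnish it (that proposition bounds the $\id$-energy of the solution to a specific equation; it does not bound a $\theta_1$-relative capacity by an $\omega$-capacity). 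The alternative via Theorem~\ref{the domination} has the same gap in a different guise: you need a measure $\mu$ in class $\theta_1$ satisfying $\theta_{h_1}^n \le c\,\theta_{\phi_1}^n + \varrho_1\mu$ on $\{h_1<\phi_1\}$ with $c_\mu \lesssim \capK_{\theta_2,\phi_2}(E)/\varrho_2$, and ``building $\mu$ from $\theta_{h_2}^n$ transported to class $\theta_1$'' is precisely the unresolved step --- $\theta_{h_1}^n$ and $\theta_{h_2}^n$ live in different classes and there is no a priori pointwise comparison between them, only an equality of supports modulo pluripolar sets.

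The paper's mechanism is cleaner and avoids all of this. It takes $\chi=\tilde\chi=\id$, sets $u_1=h^*_{E,\theta_2,\phi_2}$, $u_2=\phi_2$ in class $\theta_2$, and obtains from Theorem~\ref{th-lowerenergy-kocochuanhoa} the one-sided estimate
\begin{equation*}
\int_E \theta_{2,\psi}^n \le C\varrho_2\, A(A+B)^2 \bigl(\capK_{\theta_2,\phi_2}(E)/\varrho_2\bigr)^{2^{-n}\gamma}
\end{equation*}
valid for \emph{every} $\psi\in\mathcal{E}(X,\theta_2,\phi_2)$ whose normalized energy is at most $B$. The transport is then performed by picking $\varphi\in\mathcal{E}(X,\theta_1,\phi_1)$, $\phi_1-1\le\varphi\le\phi_1$, nearly realizing $\capK_{\theta_1,\phi_1}(E)$, and \emph{solving the Monge--Amp\`ere equation in class $\theta_2$} with right-hand side $(\varrho_2/\varrho_1)\,\theta_{1,\varphi}^n$, producing $\psi_0\in\mathcal{E}(X,\theta_2,\phi_2)$. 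Then $\int_E\theta_{2,\psi_0}^n=(\varrho_2/\varrho_1)\int_E\theta_{1,\varphi}^n$ links the two capacities directly. The uniform energy bound $E^0_{\tilde\chi,\theta_2,\phi_2}(\psi_0)\le B$ (with $B$ depending only on $n,X,\omega,A$) is what you flagged as the hard point, and it is supplied by Lemma~\ref{lem CLNfortheta} combined with the model property of $\phi_1$: one rewrites $\varrho_1 E^0_{\tilde\chi,\theta_2,\phi_2}(\psi_0)=\int_X(\phi_2-\psi_0)\,\theta_{1,\varphi}^n$ and uses $\varphi\ge\phi_1-1$ to replace $\theta_{1,\varphi}^n$ by $\theta_{1,\phi_1}^n$ at the cost of $O(A^n)$. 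Finally, since $\chi=\tilde\chi=\id$ lies in $\mathcal{W}^+_1$, one has $Q(s)=s^{1/2}$ directly, so $Q^{\circ n}(s)=s^{2^{-n}}$ with no tuning of a power-type weight; the $\gamma$ is simply the free parameter already present in Theorem~\ref{th-lowerenergy-kocochuanhoa}, not an artifact of $\tilde\chi$.
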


We now prove Theorem \ref{the comparisoncap}.  First, we need the following lemma.

\begin{lemma}\label{lem CLNfortheta}
	Let $A, B>0$ be constants.
	Let $\theta$ be a closed smooth real $(1,1)$-form representing a big cohomology class such that 
	$\theta\leq A\omega$. Assume that $u, v$ are $\theta$-psh functions satisfying $v\leq u\leq v+B$.
	Then, 
	$$\int_X(-\psi)\theta_u^n\leq\int_X(-\psi)\theta_v^n+nA^nB\int_X\omega^n,$$
	for every negative $A\omega$-psh function $\psi$.
\end{lemma}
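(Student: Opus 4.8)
The plan is to first reduce to the case of a bounded $\psi$, and then run an integration-by-parts argument based on the relative non-pluripolar product formalism of Section \ref{subsec-intebyparts}. To make the first reduction: for a general negative $A\omega$-psh function $\psi$, apply the (yet to be proven) inequality to $\psi_j:=\max\{\psi,-j\}$, which is negative, bounded and $A\omega$-psh, and let $j\to\infty$ via monotone convergence applied to $-\psi_j\nearrow -\psi\ge 0$ on both sides (if $\int_X(-\psi)\theta_v^n=\infty$ there is nothing to prove). One may also assume $\int_X\theta_v^n>0$, since otherwise $\theta_u^n=\theta_v^n=0$ (they carry the same total mass by monotonicity of non-pluripolar masses, because $v\le u\le v+B$ forces the same singularity type) and the statement is trivial.

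With $\psi$ bounded, the key identity is the telescoping formula
$$\theta_u^n-\theta_v^n=\big\langle \ddc(u-v)\,\dot{\wedge}\, T\big\rangle,\qquad T:=\sum_{k=0}^{n-1}\big\langle \theta_u^k\wedge\theta_v^{n-1-k}\big\rangle ,$$
which follows from $\langle\theta_u^{k+1}\wedge\theta_v^{n-1-k}\rangle-\langle\theta_u^k\wedge\theta_v^{n-k}\rangle=\langle\ddc(u-v)\,\dot{\wedge}\,\langle\theta_u^k\wedge\theta_v^{n-1-k}\rangle\rangle$ (using $\langle\ddc u\,\dot{\wedge}\,S\rangle=\langle\theta_u\,\dot{\wedge}\,S\rangle-\theta\wedge S$ and likewise for $v$, together with the compatibility of the relative product with non-pluripolar products of $(1,1)$-currents recorded in Section \ref{subsec-intebyparts}), summed over $k$ by linearity of the relative product in its last slot; here $\langle\ddc(u-v)\,\dot{\wedge}\,\cdot\,\rangle$ is well defined because $u-v$ is a bounded dsh function (\cite[Lemma 2.5]{Viet-convexity-weightedclass}). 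Since $T$ is a closed positive $(n-1,n-1)$-current and both $g:=u-v$ and $h:=-\psi$ are bounded $T$-admissible dsh functions, the integration-by-parts formula of Theorem \ref{th-integrabypart} (with $\chi=\id$, in its symmetric form $\int_X h\,\langle\ddc g\,\dot{\wedge}\,T\rangle=\int_X g\,\langle\ddc h\,\dot{\wedge}\,T\rangle$) gives
$$\int_X(-\psi)(\theta_u^n-\theta_v^n)=\int_X(u-v)\,\big\langle\ddc(-\psi)\,\dot{\wedge}\,T\big\rangle .$$

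To finish, write $\ddc(-\psi)=A\omega-(\ddc\psi+A\omega)$ with $\ddc\psi+A\omega\ge 0$, so $\langle\ddc(-\psi)\,\dot{\wedge}\,T\rangle=A\omega\wedge T-\langle(\ddc\psi+A\omega)\,\dot{\wedge}\,T\rangle\le A\omega\wedge T$; since $0\le u-v\le B$ and $A\omega\wedge T\ge 0$, this yields $\int_X(-\psi)(\theta_u^n-\theta_v^n)\le B\,A\sum_{k=0}^{n-1}\int_X\omega\wedge\langle\theta_u^k\wedge\theta_v^{n-1-k}\rangle$. Each term is bounded by $\int_X\omega\wedge\langle(\ddc u+A\omega)^k\wedge(\ddc v+A\omega)^{n-1-k}\rangle\le \int_X\omega\wedge(A\omega)^{n-1}=A^{n-1}\int_X\omega^n$, the first inequality because $\theta_u\le \ddc u+A\omega$ and $\theta_v\le \ddc v+A\omega$ (the differences $A\omega-\theta$ being smooth positive forms, which may be wedged freely) and the second because non-pluripolar mass is dominated by the cohomological (minimal-singularity) mass. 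Summing the $n$ terms gives $\int_X(-\psi)(\theta_u^n-\theta_v^n)\le nA^nB\int_X\omega^n$, which is the claim for bounded $\psi$, and combined with the first reduction completes the proof.

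I expect the only delicate point to be the bookkeeping with the relative non-pluripolar products — specifically, justifying the telescoping identity and the applicability of Theorem \ref{th-integrabypart} when $u$ and $v$ are themselves unbounded and only $u-v$ is bounded; this is precisely the situation the relative-product machinery of Section \ref{subsec-intebyparts} is designed to handle, and the remaining steps are routine positivity and cohomology arguments.
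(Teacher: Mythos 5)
Your argument is correct and follows essentially the same route as the paper's proof: both telescope $\theta_u^n-\theta_v^n=\ddc(u-v)\wedge T$ with $T=\sum_{l=0}^{n-1}\theta_u^l\wedge\theta_v^{n-1-l}$, apply the integration-by-parts formula of Theorem~\ref{th-integrabypart} to obtain $\int_X(-\psi)\,\ddc(u-v)\wedge T=\int_X(u-v)\,\ddc(-\psi)\wedge T$, then use $\ddc(-\psi)\le A\omega$, $0\le u-v\le B$, and the cohomological bound $\int_X\omega\wedge T\le nA^{n-1}\int_X\omega^n$. The only (inessential) difference is the opening reduction: the paper regularizes $\psi$ to a smooth function, while you truncate to $\max\{\psi,-j\}$ and pass to the limit by monotone convergence — both are valid, and the rest of your write-up just spells out the relative non-pluripolar product bookkeeping that the paper leaves implicit.
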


\begin{proof}
	Using approximations, we can assume that $\psi$ is smooth. 
	Denote 
	$$T=\sum_{l=0}^{n-1}\theta_u^l\wedge\theta_v^{n-l-1}.$$
	We have $\theta_u^n-\theta_v^n=dd^c(u-v)\wedge T$.
	Moreover,  using integration by parts (Theorem \ref{th-integrabypart}), we get
	$$\int_X(-\psi)dd^c(u-v)\wedge T
	=\int_X (u-v)dd^c(-\psi)\wedge T\leq A\int_X (u-v)\omega\wedge T\leq nA^nB\int_X\omega^n.$$
	Hence
	$$\int_X(-\psi)\theta_u^n\leq\int_X(-\psi)\theta_v^n+nA^nB\int_X\omega^n.$$
\end{proof}

\begin{proof}[Proof of Theorem \ref{the comparisoncap}]
By the inner regularity of capacities (see \cite[Lemma 4.2]{Lu-Darvas-DiNezza-mono}), we only need consider
the case where $E$ is compact. Since the case $\capK_{\theta_2, \phi_2}(E)=\varrho_2$ is trivial, we can also
assume that $\capK_{\theta_2, \phi_2}(E)<\varrho_2$. In particular,
 by \cite[Proposition 3.7]{Lu-Darvas-DiNezza-logconcave} and \cite[Lemma 2.7]{Darvas-Lu-DiNezza-singularity-metric},
 we have
 $$\sup_Xh_{E, \theta_2, \phi_2}^*=\sup_X(h_{E, \theta_2, \phi_2}^*-\phi_2)=0,$$
  where 
 $$h_{E, \theta_2, \phi_2}=\sup\left\{w\in\PSH(X, \theta_2): w|_E\leq\phi_2-1, w\leq\phi_2\right\}.$$
Set $\chi(t)=\tilde{\chi}(t)=t$. We will use Theorem \ref{th-lowerenergy-kocochuanhoa} for $u_1=(h_{E, \theta_2, \phi_2})^*$ and $u_2=\phi_2$.
It is clear that $E^0_{\tilde{\chi}, \theta_2, \phi_2}(u_2)=0$ and $u_1=u_2-1$ on $E\setminus N$, where
$N$ is a pluripolar set. Moreover, it follows from
\cite[Proposition 3.7]{Lu-Darvas-DiNezza-logconcave} that
$$I^0_{\chi}(u_1, u_2) \le E^0_{\tilde{\chi}, \theta_2, \phi_2}(u_1)=\varrho_2^{-1}\capK_{\theta_2, \phi_2}(E)\leq 1.$$
 By Theorem \ref{th-lowerenergy-kocochuanhoa}, for every $0<\gamma<1$ and $B\geq 1$, there exists $C>0$
depending only on $X, \omega, n, A$ and $\gamma$ such that
\begin{equation}\label{eq2 comparisoncap}
	\int_E\theta_{\psi}^n\leq\int_X\-\chi(-|u_1-u_2|)\theta_{\psi}^n\leq
	 C\varrho_2A(A+B)^2\left(\capK_{\theta_2, \phi_2}(E)/\varrho_2\right)^{2^{-n}\gamma},
\end{equation}
for every compact set $E$  and for each 
$\psi\in\mathcal{E}(X, \theta_2, \phi_2)$ with $E^0_{\tilde{\chi}, \theta_2, \phi_2}(\psi)\leq B$.
Let $\varphi\in \mathcal{E}(X, \theta_1, \phi_1)$ such that $\phi_1-1\leq\varphi\leq\phi_1$
and $\int_E(\theta_1+dd^c\varphi)^n\geq\dfrac{1}{2}\capK_{\theta_1, \phi_1}(E)$.
By \cite{Lu-Darvas-DiNezza-logconcave}, there exists a unique function 
$\psi_0\in\mathcal{E}(X, \theta_2, \phi_2)$ such that $\sup_X\psi_0=0$ and
$$(dd^c\psi_0+\theta_2)^n=\dfrac{\varrho_2}{\varrho_1}(dd^c\varphi+\theta_1)^n.$$
When $\psi=\psi_0$, we have
\begin{equation}\label{eq3 comparisoncap}
	\int_E\theta_{\psi}^n\geq \dfrac{\varrho_2}{2\varrho_1}\capK_{\theta_1, \phi_1}(E).
\end{equation}
Moreover, by using Lemma \ref{lem CLNfortheta} for $\varphi, \phi_1$ and using the fact that 
$(dd^c\phi_2+\theta_2)^n\leq\mathbf{1}_{\{\phi_2=0\}}\theta_2^n$ (see \cite[Theorem 3.8]{Lu-Darvas-DiNezza-mono}), we have
\begin{equation}\label{eq4 comparisoncap}
	\varrho_1 E^0_{\tilde{\chi}, \theta_2, \phi_2}(\psi_0)
	= \int_X(\phi_2-\psi_0)(dd^c\varphi+\theta_1)^n
	\leq\int_X(-\psi_0)(dd^c\phi_1+\theta_1)^n+nA^n\int_X\omega^n
	\leq B,
\end{equation}
where $B\geq 1$ depends only on $A, X, \omega, n$. Combining \eqref{eq2 comparisoncap}, \eqref{eq3 comparisoncap}
and \eqref{eq4 comparisoncap}, we get
\begin{align*}
	\capK_{\theta_1, \phi_1}(E)\leq \dfrac{2\varrho_1}{\varrho_2}\int_E\theta_{\psi_0}^n&\leq
	\dfrac{2\varrho_1}{\varrho_2}\int_X\-\chi(-|u_1-u_2|)\theta_{\psi_0}^n\\
	&\leq
	2C\varrho_1A(A+B)^2\left(\capK_{\theta_2, \phi_2}(E)/\varrho_2\right)^{2^{-n}\gamma}.
\end{align*}
The proof is completed.
\end{proof}


\bibliography{biblio_family_MA,biblio_Viet_papers,bib-kahlerRicci-flow}

\bibliographystyle{siam}

\bigskip

\noindent
\Addresses
\end{document}